 \theoremstyle{plain}               
 \newtheorem{theorem}[equation]{Theorem}                            
 \newtheorem*{lemma*}{Lemma}                                        
 \newtheorem*{theorem*}{Theorem}                                    
 \newtheorem{lemma}[equation]{Lemma}                                
\newtheorem{corollary}[equation]{Corollary}                        
 \newtheorem{proposition}[equation]{Proposition}                    
 \theoremstyle{definition} 
\newtheorem{definition}[equation]{Definition}
\newtheorem{example}[equation]{Example}
\theoremstyle{remark} 
\newtheorem{remark}[equation]{Remark}
\DeclareMathOperator{\lefttriplearrows} {{\; \tikz{ \foreach \y in {0, 0.1, 0.2} { \draw [stealth-] (0, \y) -- +(0.5, 0);}} \; }}
 \newcommand{\bbar}[1]{\setbox0=\hbox{$#1$}\dimen0=.2\ht0 \kern\dimen0 \overline{\kern-\dimen0 #1}}
 \DeclareMathOperator{\End}{\ensuremath{\mathcal{E}\kern-.125em\mathpzc{nd}}}
 \DeclareMathOperator{\fun}{Fun}
 \let\hom\relax 
 \DeclareMathOperator{\hom}{Hom}
 \DeclareMathOperator{\Hom}{\mathcal{H}\kern-.125em\mathpzc{om}}
  \DeclareMathOperator{\maps}{Maps}
 \DeclareMathOperator{\Spin}{Spin}
 \DeclareMathOperator{\String}{String}
 \DeclareMathOperator{\ab}	{{\sf Ab}}
 \DeclareMathOperator{\bibun}	{{\sf Bibun}}
 \DeclareMathOperator{\cat}	{\sf Cat}
  \DeclareMathOperator{\grp}	{{\sf Grp}} 
  \DeclareMathOperator{\gpd}	{{\sf Gpd}}
  \DeclareMathOperator{\Lie}	{{\sf Lie}}  
  \DeclareMathOperator{\man}	{\sf Man}
      \DeclareMathOperator{\stack} {{\sf Stack}}
 \DeclareMathOperator{\Top}	{\sf Top}
 \newcommand{\cE}{\mathcal{E}}
 \newcommand{\cO}{\mathcal{O}}
\newcommand{\cS}{\mathcal{S}}
 \newcommand{\A}{\mathbb{A}}
 \newcommand{\F}{\mathbb{F}}
\newcommand{\G}{\mathbb{G}}
 \newcommand{\N}{\mathbb{N}}
 \newcommand{\R}{\mathbb{R}}
\renewcommand{\S}{\mathbb{S}}
  \newcommand{\Z}{\mathbb{Z}}
 \newcommand{\sC}{\mathsf{C}}
 \newcommand{\sD}{\mathsf{D}}
 \newcommand{\sK}{\mathsf{K}}
\newcommand{\sS}{\mathsf{S}}
\definecolor{MyBlue}{rgb}{1.6,0.0,0.0}
\begin{document}
\title{Central Extension of Smooth 2-Groups and a Finite-Dimensional String 2-Group}
\author{Christopher J. Schommer-Pries}
\email{schommerpries.chris.math@gmail.com}
 \date{}

\begin{abstract}
	We provide a model of the String group as a central extension of finite-dimensional 2-groups in the bicategory of Lie groupoids, left-principal bibundles, and bibundle maps. This bicategory is a geometric incarnation of the bicategory of smooth stacks and generalizes the more na\"ive 2-category of  Lie groupoids, smooth functors, and smooth natural transformations.  In particular this notion of smooth 2-group subsumes the notion of Lie 2-group introduced by Baez-Lauda in \cite{BL04}. More precisely we classify a large family of these central extensions in terms of the topological group cohomology introduced by G. Segal in \cite{Segal70}, and our String 2-group is a special case of such extensions. There is a nerve construction which can be applied to these 2-groups to obtain a simplicial manifold, allowing comparison with the model of A. Henriques \cite{Henriques06}. The geometric realization is an $A_\infty$-space, and in the case of our model, has the correct homotopy type of String(n). Unlike all previous models \cite{Stolz96, ST04, Jurco05, Henriques06, BCSS07} our construction takes place entirely within the framework of finite dimensional manifolds and Lie groupoids. Moreover within this context our model is characterized by a strong uniqueness result. It is a canonical central  extension of Spin(n). 
\end{abstract}

 \maketitle
  \tableofcontents

\section{Introduction}

 The String group is a group (or $A_\infty$-space) which is a 3-connected cover of $\Spin(n)$. It has connections to string theory, the generalized cohomology theory {\em topological modular forms} ($tmf$), and to the geometry and topology of loop space. Many of these relationships can be explored homotopy theoretically, but a geometric model of the String group would help provide a better understanding of these subjects and their interconnections. Over the past decade there have been several attempts to provide geometric models of the String group  \cite{Stolz96, ST04, Jurco05, Henriques06, BCSS07}. The most recent of these use the language of higher categories, and consequently string differential  geometry also provides a test case for the emerging field of higher categorical differential geometry \cite{Waldorf09, SSS09, SSS08a}. 

Nevertheless, progress towards the hard differential geometry questions, such as a geometric understanding of the connection to elliptic cohomology or 
 the H\"ohn-Stolz Conjecture \cite{Stolz96}, remains slow. Perhaps one reason is that all previous models of the string group, including the higher categorical ones, are fundamentally infinite-dimensional. In a certain sense, which will be made more precise below, it is impossible to find a finite-dimensional model of $\String(n)$ as a {\em group}. However, there remains the possibility that $\String(n)$ can be modeled as a finite dimensional, but higher categorical object, namely as a finite-dimensional 2-group. This idea is not new, and models for the string group as a Lie 2-group have been given in \cite{Henriques06, BCSS07}. However, these models are also infinite-dimensional.

In this paper we consider 2-groups in the bicategory of finite dimensional Lie groupoids, left principal bibundles, and bibundle maps. This bicategory, which is equivalent to the bicategory of smooth stacks, is an enhancement of the usual bicategory of Lie groupoids, smooth functors and smooth natural transformations. We call such 2-groups {\em smooth 2-groups}. We classify a large family of central extensions of smooth 2-groups in terms of easily computed cohomological data. Our model of the string group comes from such a finite-dimensional central extension. We begin this paper with a more detailed look at the string group and the ideas needed for constructing our model. The main ingredients are, of course, the above mentioned bicategory and also a certain notion of topological group cohomology introduced by Graeme Segal in the late 60's. 
 
\subsection*{What is the String Group?}

The String group is  best understood in relation to the Whitehead tower of the orthogonal group $O(n)$. The Whitehead tower of a space $X$ consists of a sequence of spaces $X\langle n+1\rangle \to X\langle n\rangle \to \cdots \to X$, which generalize the notion of universal cover. A (homotopy theorist's) universal cover of a connected space $X$ is a space $X\langle 2 \rangle$ with a map to $X$, which induces an isomorphism on all homotopy groups except $\pi_1$, and such that $\pi_1(X\langle 2\rangle) = 0$. For more highly connected spaces, there is an obvious generalization, and the Whitehead tower assembles these together. The
maps $X\langle n+1\rangle \to X\langle n\rangle \to \cdots \to X$, induce isomorphisms $\pi_i(X\langle n\rangle ) \cong \pi_i(X)$ for $i \geq n$, and each space satisfies $\pi_i(X\langle n\rangle) = 0$ for $i <  n$. 

For large $n$, the orthogonal group $O(n)$ has the following homotopy groups: 
\begin{equation*}
\begin{array}{ c | c c c c c c c c }
i 		& 0 & 1 & 2 & 3 & 4 & 5 & 6 & 7   	\\ \hline
\pi_i(O(n)) 	 & \Z/2 & \Z/2 & 0 & \Z & 0 & 0 & 0 & \Z	 \end{array}.
\end{equation*}
The first few spaces in the Whitehead tower of $O(n)$ are the familiar Lie groups $SO(n)$ and $\Spin(n)$. These are close cousins to the String group, $\String(n)$. The maps in the Whitehead tower are realized by Lie group homomorphisms,
\begin{align*}
  SO(n) \hookrightarrow O(n)& \\
 \Z / 2\Z \to   Spin(n) \to SO(n) & .
\end{align*}

This raises the question: can we realize the remaining spaces in the Whitehead tower of $O(n)$ as Lie groups? or as topological groups? The next in the sequence would be $O(n)\langle 4\rangle = \dots =  O(n)\langle 7 \rangle$, a space which now goes under the name $\String(n)$. It is the 3-connected\footnote{For $n \geq 7 $ $\String(n)$ is 6-connected.}
 cover of $\Spin(n)$. Since $\pi_1$ and $\pi_3$ of this space are zero, it cannot be realized by a finite dimensional Lie group \footnote{This often cited fact follows from two classical results: A theorem of Malcev which states that any  connected Lie group deformation retracts onto a compact subgroup \cite{Malcev45, Malcev46} (See also \cite[Theorem 6]{Iwasawa49}), and the classification of finite dimensional compact simply connected Lie groups, which may be found in many standard text books on Lie groups. }. Moreover, since this hypothetical group is characterized homotopy-theoretically, it is not surprising that there are many models for this group.  

The easiest candidates arise from the machinery of homotopy theory. If we relax our assumption that $\String(n)$ be a topological {\em group} and allow it to be an $A_\infty$-space\footnote{Since $\String(n)$ is connected, if is an $A_\infty$-space it will automatically have the homotopy type of a loop space, i.e. as an ``$A_\infty$-group''.} then there is an obvious model. First we look at the classifying space $BO(n)$. We can mimic our discussion above and construct the Whitehead tower of $BO(n)$. The homotopy groups of $BO(n)$ are the same as those of $O(n)$, but shifted:
\begin{equation*}
\begin{array}{ c | c c c c c c c c c}
i 		& 0 & 1 & 2 & 3 & 4 & 5 & 6 & 7 & 8  	\\ \hline
\pi_i(BO) 	& 0 & \Z/2 & \Z/2 & 0 & \Z & 0 & 0 & 0 & \Z	 \end{array}
\end{equation*}
It is well known that the pointed loop space of a classifying space satisfies $\Omega(BG) \simeq G$ for topological groups $G$. It then follows that the space $\Omega(BO(n)\langle 8 \rangle)$ is an $A_\infty$-space with the right homotopy type. With more care, the homomorphism to Spin$(n)$ can also be constructed\footnote{For Lie groups $G$, the map $\Omega BG \to G$ may be constructed as the holonomy map of the universal connection on $EG$ over $BG$. Thus the composite $\Omega(B\Spin(n)\langle 8\rangle) \to \Omega B\Spin(n) \to \Spin(n)$ is one way to construct the desired map. }. 

If one insists on getting an actual {\em group}, then more sophisticated but similar homotopy theoretic techniques succeed. One replaces the space $BO(n)\langle 8 \rangle$ with its singular simplicial set, and applies Kan's simplicial loop group (see for example \cite[Ch. 5.5]{GJ99}). This produces a simplicial group, which models $\Omega(BO(n)\langle 8 \rangle)$. Taking the geometric realization gives an honest topological group with the correct homotopy type. Needless to say, this construction is not very geometric. 

In both of these approaches the homomorphism $\String(n) \to \Spin(n)$ realizes the String group as a fiber bundle whose fiber is an Eilenberg-Maclane space $K(\Z,2)$. This is a general feature of all approaches. Suppose that we are given a model of $\String(n)$ as a topological group equipped with a continuous homomorphism to Spin$(n)$, realizing it as the 3-connected cover. Let $K$ be the kernel of this map and suppose that this forms a fiber bundle
\begin{equation*}
K \to \String(n) \to \Spin(n).
\end{equation*}
 By the long exact sequence of homotopy groups associated to this bundle, we have $K \simeq K(\Z, 2)$. 

The primary method of building models of the String group is consequently finding group extensions where the kernel is topologically an Eilenberg-MacLane $K(\Z, 2)$-space. The first geometric models, constructed by S. Stolz and P. Teichner, were of this kind \cite{Stolz96, ST04}. Any CW-complex with the homotopy type of a $K(\Z, 2)$ must have cells of arbitrarily high dimension, and is thus infinite-dimensional\footnote{In fact an easy Serre spectral sequence argument shows that $\String(n)$ itself has cohomology in arbitrarily high degrees and hence has no finite dimensional CW-model.}. Although the groups $K$ used in these models were not CW-complexes, they too were infinite-dimensional and hence resulted in infinite-dimensional models of the String group. 

While $K(\Z, 2)$ is infinite-dimensional, it still has a  well known finite-dimensional description, but at the cost of working higher categorically (or equivalently through the language of $S^1$-gerbes \cite{Giraud71, Murray96, BX06}). This suggests that there might be a finite-dimensional model of $\String(n)$, but as a higher categorical object. This idea is not new, and goes back to the work of Baez-Lauda \cite{BL04}, Henriques \cite{Henriques06}, and Baez-Crans-Schreiber-Stevenson \cite{BCSS07}. The latter were able to construct a Lie 2-group modeling String$(n)$ in a precise sense, but their model is also infinite-dimensional. 

Baez-Lauda \cite{BL04} considered (weak) group objects in the bicategory $\Lie\gpd$ of Lie groupoids\footnote{A {\em Lie groupoid} is the common name for a groupoid object internal to the category of smooth manifolds, in which the source and target maps are surjective submersions.}, smooth functors  and smooth natural transformations. These objects are now commonly called Lie 2-groups, and the finite dimensional incarnation of $K(\Z,2)$ in this context is the Lie 2-group we call $[pt/S^1]$. The Lie group $\Spin(n)$ also provides a basic example of a Lie 2-group. We will elaborate on this in due course. 

J. Baez and A. Lauda \cite{BL04} considered certain ``extensions" $[pt/S^1] \to E \to Spin(n)$, and under certain restrictive assumptions (which can be removed), they proved that such central extensions are in bijection with smooth group cohomology $H^3_{grp}(G; A)$. Herein lies the problem. Since the work of Hu, van Est, and Mostow,
\cite{Hu52-1, Hu52-2, vanEst53, vanEst55, Mostow62}
 we have that for all compact 1-connected simple Lie groups $G$,
 $H^3_{grp}( G; S^1) = 0$. Thus in $\Lie\gpd$, the only such central extension is the trivial one. This is why Baez-Crans-Schreiber-Stevenson were led to infinite-dimensional groups. Essentially, they replace $G = \Spin(n)$ with an infinite-dimensional 2-group for which the above central extension exists. This Lie 2-group is not equivalent to $\Spin(n)$, but nevertheless its geometric realization is homotopy equivalent to $\Spin(n)$, and the resulting central extension does model $\String(n)$.  
The model of A. Henriques \cite{Henriques06} uses different techniques but produces essentially the same object as BCSS, but cast in the language of simplicial spaces.  

In this paper we work entirely within the context of finite dimensional manifolds and Lie groupoids,  never passing into the infinite dimensional setting. As a result our model is fundamentally finite dimensional. The cost is that we must consider groups not in $\Lie\gpd$, but in the bicategory $\bibun$ of Lie groupoids, left-principal bibundles, and bibundle maps. This bicategory is a natural generalization of $\Lie\gpd$, in which the 1-morphisms have a simple geometric description.  Hence, this notion of 2-group, which we call {\em smooth 2-group}, subsumes the notion previously introduced by Baez-Lauda. The bicategory $\bibun$ has other familiar guises. It is equivalent to the bicategory of smooth stacks and also to the (derived) localization of $\Lie\gpd$ with respect to the {\em local equivalences} \cite{Pronk96, Lerman08}. This later has a description in terms of `smooth anafunctors' \cite{Bartels04}. 
 
\subsection*{The Structure and Results of this Paper}

 The bicategory $\bibun$, sadly, does not appear to be widely known, and so we provide a brief review of some key results about this bicategory that we will use.  We then review the notion of weak group object (and also weak abelian group object) in a general bicategory. These are commonly called {\em 2-groups}. More importantly we make precise the notion of extension and central extension of 2-groups, particularly in the context of the bicategory $\bibun$. This generalizes those central extensions of topological groups,
\begin{equation*}
	1 \to A \to E \to G \to 1
\end{equation*}
in which the $A$-action on $E$ realizes it as an $A$-principal bundle over $G$. We also show that the geometric realization of a group object in $\bibun$ is naturally a (group-like) $A_\infty$-space. 

To make the notion of central extension precise in the context of $\bibun$, we must consider certain pull-backs. However, just like the category of smooth manifolds, $\bibun$ does not admit all pull-backs. Nevertheless if two maps of smooth manifolds are transverse, then the fiber product exists. We extend this notion to $\bibun$, introducing {\em transversality for bibundles} in a way which generalizes the usual notion of transversality for smooth maps. We prove that for transverse bibundles the fiber product indeed exists. To our knowledge this is the first time such a result has appeared in the literature. We also introduce a notion of {\em surjective submersion} for bibundles, generalizing the usual notion. This permits us to make precise the central extensions of 2-groups we wish to consider. 

Given an abelian 2-group $\A$ and a 2-group $\G$ (both in $\bibun$) there is a bicategory of central extensions of $\G$ by $\A$, $Ext(\G; \A)$. This bicategory is contravariantly functorial in $\G$ and covariantly functorial in $\A$, and so the Baer sum equips $Ext(\G; \A)$ with the structure of a symmetric monoidal bicategory (See \cite{GPS95, KV94, KV94-2, BN96, DS97} and especially \cite[Chap. 3]{SchommerPries09}). In this paper we prove the following theorem:

\begin{theorem} \label{Thm:MainThm}
	Let $G$ be a Lie group and $A$ an abelian Lie group, viewed as a trivial $G$-module. Then we have an  (unnatural) equivalence of symmetric monoidal bicategories:
	\begin{equation*}
		Ext(G; [pt/A]) \simeq  H^3_\text{SM}( G; A) \times H^2_\text{SM}( G; A)[1] \times H^1_\text{SM}( G; A)[2].
	\end{equation*}
	where $ H^i_\text{SM}( G; A)$ denotes the smooth version Segal-Mitchison topological group cohomology \cite{Segal70}. Moreover, isomorphism classes of central extensions, 
		\begin{center}
	\begin{tikzpicture}[thick]
		\node (LL) at (0,0) 	{$1$ };

		\node (L) at (2,0) 	{$\downdownarrows$};

		\node (LA) at (2,.5) 	{$A$};
		\node (LB) at (2,-.5) 	{$pt$};
		\node (M) at (4, 0) {$\downdownarrows$};
		\node (MA) at (4,.5) 	{$\Gamma_1$};
		\node (MB) at (4,-.5) 	{$\Gamma_0$};

		\node (R) at (6,0)	{$\downdownarrows$};
		\node (RA) at (6,.5)	{$G$};
		\node (RB) at (6, -.5)	{$G$};

		\node (RR) at (8,0)	{$1$};

		\draw [->] (LL) --  node [left] {$$} (L);
		\draw [->] (L) -- node [above] {$$} (M);
		\draw [->] (M) -- node [right] {$$} (R);
		\draw [->] (R) -- node [below] {$$} (RR);
	\end{tikzpicture}
	\end{center}
	are in natural bijection with $H^3_\text{SM}(G;A)$.
\end{theorem}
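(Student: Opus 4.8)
The plan is to classify central extensions of the Lie group $G$ (viewed as a smooth 2-group with a single object in each level) by the abelian 2-group $[pt/A]$ inside the bicategory $\bibun$, and to organize the answer so that it assembles into an equivalence of symmetric monoidal bicategories. I would proceed by analyzing the bicategory $Ext(G;[pt/A])$ through the ``layers'' of data that a central extension carries. A central extension is a sequence $[pt/A] \to \Gamma \to G$ in which the middle term $\Gamma$ is a Lie groupoid $(\Gamma_1 \rightrightarrows \Gamma_0)$ together with a coherent weak group structure, a fibration structure over $G$, and a trivialization of the fiber as $[pt/A]$. My first step is to unwind exactly what a smooth 2-group structure on $\Gamma$ amounts to: a multiplication bibundle, an associator, a unit, and their coherence 2-cells, all satisfying the pentagon and unit axioms. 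Because $[pt/A]$ has the homotopy type of $K(A,1)$ as a stack (it is $BA$) and we are extending the discrete-in-the-categorical-direction group $G$, the extension data should reduce, after choosing suitable local models, to cocycle data living in a double complex.

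The key technical move is to present $\Gamma$ concretely. Since $[pt/A] \to \Gamma \to G$ realizes the fiber as an $A$-gerbe, I would trivialize locally over a cover (or work with the canonical presentation coming from $G$ itself via its nerve), and extract a simplicial/cohomological gadget: a smooth bar-type resolution of $G$ whose $A$-valued cochains organize into the Segal--Mitchison complex. Concretely, the group multiplication on $\Gamma$ contributes an associativity cocycle valued in $A$ over $G\times G \times G$ (the $H^3$ piece), the choice of multiplication bibundle and its ambiguity contributes a class over $G \times G$ (the $H^2[1]$ piece), and the residual automorphisms/the choice of trivialization of the $A$-gerbe contribute a class over $G$ (the $H^1[2]$ piece). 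The three numbered suspension shifts in the statement correspond precisely to the three rows of this double complex, and the heart of the argument is to show that the total differential splits (unnaturally) so that the homotopy category and the higher morphism groupoids of $Ext(G;[pt/A])$ are computed exactly by these three Segal--Mitchison groups. I would verify that Baer sum corresponds to addition of cocycles, which gives the symmetric monoidal structure, and that the braiding is the evident one, yielding the symmetric monoidal equivalence rather than a mere bijection on classes.

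The main obstacle I anticipate is controlling the higher-categorical coherence: unlike the classical case of topological central extensions, here the ``group law'' is only associative up to a specified invertible bibundle, and the trivialization of the fiber is only defined up to the $2$-morphisms of $\bibun$, so the naive cocycle conditions must be replaced by conditions in a \emph{total complex} that mixes the group-cohomology direction with the internal gerbe/stack direction. Establishing that the relevant obstruction and ambiguity groups are exactly $H^3_\text{SM}$, $H^2_\text{SM}$, and $H^1_\text{SM}$ — and not some a priori more complicated hypercohomology — requires identifying the internal cochain complex computing maps into $[pt/A]$ with the correct shift of the coefficient complex, and then invoking an acyclicity/contractibility argument (in the spirit of Segal's soft resolutions) to collapse the double complex. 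I expect this collapse, together with a careful spectral-sequence or filtration argument showing the absence of differentials between the three layers, to be where most of the real work lies.

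For the final sentence, once the full symmetric monoidal equivalence is in hand, the claim about isomorphism classes of central extensions follows by reading off $\pi_0$ of the bicategory $Ext(G;[pt/A])$. The plan is to observe that two central extensions are equivalent in $Ext(G;[pt/A])$ precisely when their associated total cocycles differ by a coboundary, and that the $H^2$ and $H^1$ pieces record \emph{automorphisms} and \emph{automorphisms of automorphisms} of a fixed extension rather than distinct isomorphism classes. Hence passing to $\pi_0$ kills the shifted factors $H^2_\text{SM}(G;A)[1]$ and $H^1_\text{SM}(G;A)[2]$ and leaves exactly $H^3_\text{SM}(G;A)$, with the bijection natural because the equivalence above, while itself unnatural, restricts to a natural isomorphism on the level of connected components (the ambiguity in the equivalence affects only the higher, now-discarded, data). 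The remaining care is to confirm that this bijection is compatible with Baer sum, so that it is in fact an isomorphism of abelian groups, which is immediate from the monoidal compatibility established in the main part of the proof.
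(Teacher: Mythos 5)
Your overall route is the paper's: trivialize the extension data over a good simplicial cover of $BG_\bullet$, extract cocycle data in the double complex ${C}^{p,q} = C^\infty(U_q^{[p+1]}, A)$ whose total cohomology is $H^*_{SM}(G;A)$, check that Baer sum matches addition of cochains, split the resulting cochain bicategory unnaturally, and read off a natural bijection on $\pi_0$. But your middle paragraph contains a genuine misidentification of where the three factors come from. You assign the associator datum over $G^3$ to $H^3_{SM}$, the multiplication datum over $G^2$ to $H^2_{SM}[1]$, and the gerbe datum over $G$ to $H^1_{SM}[2]$, saying the shifts ``correspond to the three rows of the double complex.'' In fact the three components $\lambda_3: U_3 \to A$ (associator), $\lambda_2: U_2^{[2]} \to A$ (multiplication), and $\lambda_1: U_1^{[3]} \to A$ (gerbe) are the components of a \emph{single} total-degree-3 cocycle, and this total cocycle is the \emph{object} datum of one extension; the factors $H^2_{SM}[1]$ and $H^1_{SM}[2]$ instead arise from total-degree-2 cochains $(\theta_1, \theta_2)$ and total-degree-1 cochains $\omega$, which are the 1-morphisms and 2-morphisms of $Ext(G;[pt/A])$. (Your final paragraph states this correctly, contradicting the middle one.) The literal reading fails concretely in the case the theorem is built for: when $G = \mathrm{Spin}(n)$ the smooth-cochain row (which houses the associator) has vanishing cohomology in positive degrees, so your identification would compute the $H^3$ factor as zero, whereas the class of the String extension is carried by the \v{C}ech component $\lambda_1$, detected by the edge homomorphism $H^3_{SM}(G;S^1) \to \check H^2(G;\cO_{S^1})$. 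Mixing the rows in the total complex is precisely why Segal--Mitchison cohomology succeeds where naive smooth group cohomology does not.

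Relatedly, the splitting is not obtained, as you propose, from a spectral-sequence or filtration argument showing ``absence of differentials between the three layers'' of the double complex. The paper first proves a \emph{natural} equivalence $Z_{SM}(G;A) \simeq Ext(G;[pt/A])$, where $Z_{SM}(G;A)$ is the strict symmetric monoidal bicategory built from the 3-term total complex $Z^3_{SM} \leftarrow C^2_{SM} \leftarrow C^1_{SM}$; the unnatural splitting into $H^3 \times H^2[1] \times H^1[2]$ is then a general fact about such bicategories (Lemma \ref{Lma:CatFromChainComplexSplits}): being strict and symmetric monoidal, their Postnikov k-invariants vanish, or equivalently one computes with a skeleton. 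No acyclicity of the cover enters the splitting; the good-cover hypothesis is needed only so that the total complex computes $H^*_{SM}$ (Brylinski) and so that every extension, homomorphism, and 2-homomorphism can be trivialized over the cover, giving essential surjectivity, essential fullness, and fully-faithfulness of the comparison functor. You should also note that the hard smooth-geometric content you partially gesture at --- the multiplication bibundle admits no global section in general, so one must pass to pull-back groupoids along covers of $G \times G$ and $G^3$ and use that bundlization turns these pull-back functors into Morita equivalences (Proposition \ref{PropSectionIsBundlization} and Example \ref{ExamplePullbackGroupoid}) --- together with independence of the chosen cover under refinement, is where the construction actually lives.
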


In the above theorem an abelian group $M$ is regarded as a symmetric monoidal bicategory in three ways. It can be viewed as a symmetric monoidal bicategory $M$ with only identity 1-morphisms and 2-morphisms. It can be viewed as a symmetric monoidal bicategory $M[1]$ with one object, $M$ many 1-morphisms, and only identity 2-morphisms. Finally,  it may be viewed as $M[2]$, a symmetric monoidal bicategory with one object, one 1-morphism, and $M$ many 2-morphisms. Specializing to the case relevant to the String group we obtain the following Theorem: 

\begin{theorem} \label{String}
	If  $n \geq 5$, $A = S^1$, and $G=\Spin(n)$, we have 
	\begin{equation*}
		H^i_\text{SM}( \Spin(n); S^1) \cong H^{i+1}( B\Spin(n); \Z) \cong \begin{cases}
			\Z & i = 3 \\
			0 & i = 1,2 \\
		\end{cases}.
	\end{equation*}
		Thus for each class $[\lambda] \in H^3_\text{SM}( \Spin(n); S^1) \cong \Z$ the bicategory of central extensions with that class is contractible\footnote{i.e. equivalent to the terminal bicategory.}, hence such extension are coherently unique. Moreover, the central extension corresponding to a generator of $H^3_\text{SM}( \Spin(n); S^1)$ gives a finite dimensional model for String$(n)$.
\end{theorem}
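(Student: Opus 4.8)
The plan is to dispatch the three assertions in turn: the cohomology computation, the contractibility of the extension bicategory, and the homotopy-type identification.

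\emph{Step 1 (the cohomology).} To establish the displayed isomorphism I would begin with the exponential sequence of topological $\Spin(n)$-modules $0 \to \Z \to \R \to S^1 \to 0$ and the long exact sequence it induces on Segal--Mitchison cohomology. The key input is that $H^i_\text{SM}(\Spin(n);\R)$ vanishes for $i > 0$: since $\Spin(n)$ is compact one can average, so the smooth (hence Segal--Mitchison) cohomology with coefficients in the soft module $\R$ is concentrated in degree $0$. The connecting maps then give the dimension shift $H^i_\text{SM}(\Spin(n);S^1) \cong H^{i+1}_\text{SM}(\Spin(n);\Z)$ for $i \geq 1$, and because $\Z$ is discrete the Segal--Mitchison groups with $\Z$-coefficients agree with the singular cohomology $H^{i+1}(B\Spin(n);\Z)$ of the classifying space. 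It then remains to compute the latter. For $n \geq 5$ the group $\Spin(n)$ is $2$-connected with $\pi_3(\Spin(n)) \cong \Z$, so $B\Spin(n)$ is $3$-connected with $\pi_4 \cong \Z$; by Hurewicz $H^2 = H^3 = 0$ and $H^4(B\Spin(n);\Z) \cong \Z$, which yields the stated values. The hypothesis $n \geq 5$ is exactly what guarantees that $\pi_3$ is infinite cyclic and simple.

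\emph{Step 2 (coherent uniqueness).} Feeding these values into Theorem \ref{Thm:MainThm} gives
\begin{equation*}
  Ext(\Spin(n); [pt/S^1]) \simeq \Z \times 0[1] \times 0[2] \simeq \Z,
\end{equation*}
the discrete symmetric monoidal bicategory on the set $\Z$: the factors $H^2_\text{SM}[1]$ and $H^1_\text{SM}[2]$ collapse to the terminal bicategory precisely because $H^1_\text{SM} = H^2_\text{SM} = 0$. The bicategory of central extensions carrying a fixed class $[\lambda] \in H^3_\text{SM} \cong \Z$ is the full subbicategory lying over the point $[\lambda]$, which under this equivalence is the product of the two remaining terminal factors, hence itself terminal. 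Thus it is contractible, and any two extensions with class $[\lambda]$ are equivalent through a contractible space of choices --- the asserted coherent uniqueness.

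\emph{Step 3 (the homotopy type).} Fix a generator $[\lambda] \in H^3_\text{SM}(\Spin(n);S^1) \cong \Z$ and let $[pt/S^1] \to \Gamma \to \Spin(n)$ be the corresponding central extension in $\bibun$; by construction $\Gamma$ is presented by a finite-dimensional Lie groupoid. Applying the nerve and geometric realization of the introduction, $\Gamma$ becomes a group-like $A_\infty$-space $|\Gamma|$, and I would argue that realizing the exact sequence produces a fibration sequence
\begin{equation*}
  K(\Z,2) \simeq |[pt/S^1]| \longrightarrow |\Gamma| \longrightarrow |\Spin(n)| \simeq \Spin(n),
\end{equation*}
using $|[pt/S^1]| = BS^1 \simeq K(\Z,2)$ and that the nerve of the unit groupoid on $\Spin(n)$ realizes to $\Spin(n)$. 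The long exact sequence of this fibration, together with the fact that $\pi_\ast K(\Z,2)$ is concentrated in degree $2$, forces $\pi_i|\Gamma| \cong \pi_i \Spin(n)$ for $i \geq 4$ and reduces the low-degree behaviour to the connecting map $\partial \colon \pi_3(\Spin(n)) \to \pi_2(K(\Z,2))$. I would identify the fibration as the $K(\Z,2)$-bundle classified by the image of $[\lambda]$ under the transgression $H^4(B\Spin(n);\Z) \cong H^3(\Spin(n);\Z)$ (an isomorphism since $\Spin(n)$ is $2$-connected), so that $\partial$ is the pairing of $\pi_3 \cong H_3(\Spin(n);\Z)$ with this class; a generator pairs to an isomorphism $\Z \to \Z$. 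Hence $\pi_3|\Gamma| = \ker \partial = 0$ and $\pi_2|\Gamma| = \coker \partial = 0$, while $\pi_0$ and $\pi_1$ vanish as well, so $|\Gamma|$ is $3$-connected and agrees with $\Spin(n)$ in all higher degrees: it is a finite-dimensional model of $\String(n)$.

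\emph{Main obstacle.} The routine parts are Steps 1--2. The crux is the last comparison in Step 3: one must check that geometric realization turns the $\bibun$-central extension into an honest fibration sequence of spaces and that its classifying class is the transgression of the chosen Segal--Mitchison class, so that the word ``generator'' translates into ``$\partial$ is an isomorphism.'' This is where the naturality of the nerve construction and the identification of the abstract extension class with a characteristic class of the realized bundle must be made precise.
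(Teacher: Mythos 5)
Your proposal is correct and follows essentially the same route as the paper: Step 1 is Corollary \ref{CorCptG} plus the standard computation of $H^4(B\Spin(n);\Z)$, Step 2 is the specialization of Theorem \ref{Thm:MainThm}, and Step 3 coincides with the paper's proof of Theorem \ref{ThmConstructionOfString}, where the ``main obstacle'' you flag is handled by exhibiting the nerve of $E^\lambda$ as a simplicial pullback of the universal $K(A[2])_\bullet$-bundle along $\lambda_1$, so that $|E^\lambda| \to \Spin(n)$ is the $K(\Z,2)$-bundle classified by the image of $[\lambda]$ under the edge homomorphism $H^3_{SM}(\Spin(n);S^1) \to \check H^2(\Spin(n);S^1)$, which Segal identifies with the transgression exactly as in your argument. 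The only cosmetic difference is that the paper also records a second proof that this edge map is an isomorphism, via the double-complex spectral sequence together with Hu--van Est--Mostow vanishing of smooth group cohomology and primitivity of $\check H^2(\Spin(n);S^1)$.
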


The uniqueness in the above theorem is the strongest possible given the category number of the problem. It has the following interpretation. Given a class $[\lambda] \in H^3_\text{SM}( \Spin(n); S^1)$, there exists a central extension realizing that class. Any two such extensions are isomorphic, and moreover any two 1-morphisms realizing such an isomorphism are isomorphic by a unique 2-isomorphism.




\subsection*{Importance of the String Group} 

The importance of the String group was first noticed in physics. It is well known that in order to define the 1-dimensional super-symmetric sigma model with target space a manifold $X$, one needs $X$ to be a spin manifold.  A similar problem for the 2-dimensional super-symmetric sigma model was studied by Killingback \cite{Killingback87} and later by Witten \cite{Witten88}. They realized that the 2-dimensional super-symmetric sigma model in a space $X$ requires a ``spin structure on the free loop space $LX$''. Witten's investigations eventually lead him to what is now called the Witten genus, which associates to an oriented manifold a formal power series whose coefficients are given by certain combinations of characteristic numbers. For string manifolds, this is the $q$-expansion of an integral modular form. 

One way to understand spin structures on a manifold $X$ is homotopy-theoretically.
The stable normal bundle induces a classifying map $X \to BO$, and a homotopy-theoretic spin structure is a lift of this map to $B\Spin$.  Classical obstruction theory arguments show such a lift exists only if the first and second second Stiefel-Whitney classes vanish. If both $w_1$ and $w_2$ vanish, then there is a new characteristic class  $\frac{p_1}{2}$, such that $2 \cdot (\frac {p_1}{2}) = p_1$ is the first Pontryagin class. A further lift to $BO\langle 8\rangle$ exists if and only if $\frac {p_1}{2}$ vanishes. Such a lift is the homotopy theoretic version of a string structure. A `spin structure on loop space' exists if the transgression of $\frac 1 2 p_1$ vanishes, and it satisfies a further locality property if $\frac 1 2 p_1$ itself vanishes \cite{ST}.

Standard techniques allow one to construct for each of the spaces $BO\langle n\rangle$ a corresponding bordism theory of $BO\langle n\rangle$-manifolds. These bordism theories gives rise to generalized cohomology theories, or more precisely $E_\infty$-ring spectra, $MO\langle n\rangle$. The Witten genus is an $BO\langle 8\rangle$-bordism invariant, and thus gives rise to a map, $MO\langle 8\rangle(pt) \to MF$, where $MF$ is the ring of integral modular forms. 

The Witten genus has a refinement as a map of cohomology theories \cite{AHS01, AHR, Henriques08}:
\begin{equation*}
	MO\langle 8\rangle \to tmf.
\end{equation*}
Here $tmf$ is the theory constructed by Hopkins and Miller of {\em topological modular forms} \cite{Hopkins95, Hopkins02}. There is a map of graded rings $tmf^*(pt) \to MF$, which factors the Witten-Genus. This map is rationally an isomorphism, but is not surjective or injective, integrally.
The ring $tmf^*(pt)$ contains a significant amount of torsion. The refinement of the Witten genus is similar to the refinement of the $\hat A$-genus, which can also be viewed as a map of cohomology theories, 
\begin{equation*}
	MSpin \to KO.
\end{equation*}
Here $KO$ is real K-theory.
These refinements have the following consequences. If $E \to X$ is a family of string manifolds parametrized by $X$, then there is a family Witten genus which lives in $tmf^*(X)$. Similarly a family of spin manifolds has a family version of the $\hat A$-genus, which lives in $KO^*(X)$. While there are homotopy theoretic descriptions of both of these based on the Thom isomorphisms for string and spin vector bundles, respectively, the $\hat A$-genus also has an analytic/geometric interpretation derived from the concrete geometric model of Spin$(m)$.

Given a manifold with a geometric spin structure, we can form the associated bundle of spinors and the corresponding Clifford-linear Dirac operator. If we have a family of spin manifolds parametrized by a space $X$, we get a corresponding family of Fredholm operators. This represents the class in $KO^*(X)$. The Witten genus has no corresponding geometric definition\footnote{Note however that Witten's original argument is based on the construction of a Spin-structure on the free loop space. His heuristic derivation was to take the $S^1$-equivariant index of the \textquotedblleft Dirac operator on loop space
\textquotedblright. 
}, and nor does the cohomology theory $tmf$. A suitable geometric model for the String group will lead to a better {\em geometric} understanding of string structures and might provide insight into these problems. 

Finally, we should mention an as yet unresolved conjecture due independently to G. H\"ohn and  S. Stolz relating string structures and Riemannian geometry.  S. Stolz conjectures in \cite{Stolz96} that a $4k$-dimensional string manifold which admits a positive Ricci curvature metric necessarily has vanishing Witten genus. Some progress has been made towards this (and related conjectures \cite{StolzPriv}) in the Dissertation work of C. Redden \cite{Redden06}, but a clear answer remains out of reach. A better geometric understanding of string structures would doubtless shed light on this problem as well.

\subsection*{Acknowledgments}

I am grateful for the encouragement, inspiration and insightful comments continually provided by Peter Teichner. I would like to thank Alan Weinstein for introducing me to Lie groupoids and to the bicategory $\bibun$. I would also like to thank Christian Blohmann, Stephan Stolz, Urs Schreiber, Christopher Douglas, Calvin Moore, Christoph Wockel, Andr\'e Henriques, and Konrad Waldorf for their many useful conversations and remarks while this work was in progress.   

\section{Lie Groupoids and Smooth Stacks}

\subsection{Lie Groupoids}

\begin{definition}
A {\em Lie groupoid} is a groupoid object, $\Gamma = (\Gamma_1 \rightrightarrows \Gamma_0)$, in the category of (finite dimensional) smooth manifolds in which the source and target maps,
 \begin{equation*}
	s,t: \Gamma_1 \to \Gamma_0
\end{equation*}
are surjective submersions. (In particular the iterated fiber products $\Gamma_1 \times_{\Gamma_0} \Gamma_1$ and $\Gamma_1 \times_{\Gamma_0} \Gamma_1 \times_{\Gamma_0} \Gamma_1$ exist as smooth manifolds). 
{\em Functors} and {\em natural transformations} are defined as functors and natural transformations internal to the category of smooth manifolds. 
\end{definition}

Together Lie groupoids, functors and natural transformations form a 2-category, $\Lie\gpd$. There are many examples of Lie groupoids. The most common are special cases of the following two kinds:

\begin{example} [$G$-Spaces] \label{ExampleManAsLieGpd} \label{ExampleLieGroups}
Let $G$ be a Lie group acting smoothly (say, on the right) on a manifold $X$. Then we can form the {\em action groupoid} $\Gamma = [X/G]$. The objects are $\Gamma_0 = X$ and morphisms are $\Gamma_1 = X \times G$. The target map is projection, and the source is the action map. Composition,
\begin{equation*}
	m: (X \times G) \times_X (X \times G) = X \times G \times G \to X \times G
\end{equation*}
is given by multiplication in $G$. The identity map is $x \mapsto (x, e)$ and the inverse map is $(x, g) \mapsto (xg, g^{-1})$. 

When the group is trivial, this allows any smooth manifold $X$ to be viewed as a Lie groupoid with $X_0 = X_1 = X$ and all maps identity maps. When the manifold $X = pt$ is trivial, this allows any Lie group $G$ to be viewed as a Lie groupoid with $G_0 = pt$ and $G_1 = G$. In this case the composition is the group multiplication, with the usual identities and  inverses.  
\end{example}

\begin{example} [\v{C}ech Groupoids] \label{ExampleCechGroupoids}
If $Y \to X$ is a submersion, 	then we can form the {\em \v{C}ech groupoid} $X_Y$. We have objects $(X_Y)_0 = Y$ and morphisms $(X_Y)_1 = Y^{[2]} : = Y \times_X Y$. The source and target maps are the canonical projections, the identities come from the diagonal. Inversion comes from the flip map and composition comes from forgetting the middle factor. We will only be interested in the case where $Y$ is a {\em surjective} submersion, and in particular when $Y = U \to X$ is an ordinary cover. 
The special case $Y = M \to pt = X$ yields a Lie groupoid known as the {\em pair groupoid} $EX$.


\end{example}


There are also many examples of functors and natural transformations:

\begin{example}[Smooth Maps]
Let $X, Y$ be manifolds viewed as Lie groupoids. Smooth functors from $X$ to $Y$ are the same as smooth maps $X \to Y$. Given two such functors $f,g$ there are no natural transformations unless we have equality $f = g$. In that case there is just the identity natural transformation. This gives a (fully-faithful) inclusion functor $\man \to \Lie\gpd$.
\end{example}
\begin{example}[Lie Homomorphisms]
	Let  $G$ and $H$ be Lie groups viewed as Lie groupoids. The functors from $G$ to $H$ are precisely the Lie group homomorphisms. A natural transformation between $f$ and $g$ is the same as an element of $H$, conjugating $f$ into $g$. 
\end{example}

\begin{example} \label{PairGroupoidEquiv}
	Let $X$ be a manifold and let $EX$ be the corresponding pair groupoid. There is a unique functor to the one-point groupoid $pt$. A choice of point $x_0 \in X$, determines a functor $x_0: pt \to EX$. The composition $pt \to EX \to pt$ is the identity functor. The other composition $EX \to pt \to EX$ sends every object to $x_0$ and every morphism to $\iota(x_0)$. This is naturally isomorphic to the identity functor via the natural transformation
	\begin{equation*}
		\eta: x \mapsto (x, x_0).
\end{equation*}
  	Thus $EX$ and $pt$ are equivalent as Lie groupoids. 
	\end{example}

	Let $U \to X$ be a cover, and let $X_U$ be the resulting \v{C}ech groupoid. Recall that the \v{C}ech groupoid can be thought of as the pair groupoid, but in the category of spaces over $X$. Again there is a canonical functor $X_U \to X$, and $X$ serves the same role as the point, but in the category of spaces over $X$. Thinking in this line, one is tempted to guess that $X_U \to X$ is an equivalence. However, usually this is false. The canonical functor $X_U \to X$ is an equivalence if and only if the cover admits a global section $s: X \to U$. More precisely, we have the following lemma, whose proof is a straightforward calculation left to the reader.
	
\begin{lemma} \label{mapsofcoversgiveequivofcech}
Let $Y \to X$ and $Z \to X$ be spaces over $X$. Then the corresponding \v{C}ech groupoids are equivalent if and only if there exist maps over $X$,  $f: Y \to Z$ and $g: Z \to Y$. In that case the equivalence is given by the canonically induced functors and the natural transformations are given by,
\begin{align*}
Y & \to Y^{[2]} \\
y & \mapsto (y, gf(y)) \\
Z & \to Z^{[2]} \\
z & \mapsto (z, fg(z))
\end{align*}
In particular, $X_U$ is equivalent to $X$ if and only if the cover $U \to X$ admits a global section.  
\end{lemma}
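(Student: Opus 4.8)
The plan is to prove both implications by exploiting the defining feature of a \v{C}ech groupoid: for any two points $y_1, y_2 \in Y$ lying in the same fibre of $Y \to X$ there is exactly one morphism $y_1 \to y_2$ in $X_Y$, namely the pair $(y_1,y_2) \in Y^{[2]}$. Two consequences drive the whole argument. First, a smooth functor $X_Y \to X_Z$ is completely determined by its map on objects $\varphi_0 \colon Y \to Z$, the map on arrows being forced to be $(y_1,y_2)\mapsto(\varphi_0 y_1, \varphi_0 y_2)$; such a $\varphi_0$ defines a functor precisely when $\varphi_0 y_1$ and $\varphi_0 y_2$ share a fibre whenever $y_1,y_2$ do. Second, a natural transformation between two parallel functors into a \v{C}ech groupoid is unique if it exists, being the family of connecting arrows, and it exists exactly when the two object-maps land in a common fibre pointwise. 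These observations reduce all the functoriality, smoothness, and naturality bookkeeping to fibre-matching.

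For the direction ($\Leftarrow$), given maps $f\colon Y\to Z$ and $g\colon Z\to Y$ over $X$ I would define functors $F,G$ by $F_0=f$, $G_0=g$ together with the induced maps on $Y^{[2]}, Z^{[2]}$. Because $f,g$ commute with the projections to $X$, the pair $(fy_1,fy_2)$ really lies in $Z^{[2]}=Z\times_X Z$, so $F$ (and likewise $G$) is a well-defined smooth functor, functoriality being immediate since arrows are mere pairs. The composite $GF$ has object-map $gf$, and since $g,f$ are over $X$ the point $gf(y)$ lies in the same fibre as $y$; hence $(y,gf(y))\in Y^{[2]}$ is a legitimate, smoothly varying arrow $y\to gf(y)$, giving the natural isomorphism $\eta\colon \id \Rightarrow GF$ of the statement. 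Naturality is automatic by the uniqueness remark, and every arrow in a \v{C}ech groupoid is invertible, so $\eta$ is an isomorphism. The symmetric $\varepsilon\colon\id\Rightarrow FG$ with $z\mapsto(z,fg(z))$ completes the equivalence, reproducing exactly the transformations displayed in the lemma.

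For ($\Rightarrow$), I would start from an equivalence $F\colon X_Y\to X_Z$, $G\colon X_Z\to X_Y$ with its natural isomorphisms, and set $f:=F_0$, $g:=G_0$. The point to verify is that $f$ and $g$ are maps over $X$. Functoriality forces the composite $Y\to Z\to X$ (via $F_0$, then the projection) to be constant on the fibres of $Y\to X$, so it descends through the submersion $Y\to X$ to a smooth self-map of $X$; the connecting isomorphism $\id\Rightarrow GF$, together with surjectivity of $Y\to X$, then forces this self-map to be inverse to the one covered by $G_0$, hence a diffeomorphism. The crux — and the step I expect to be the main obstacle — is to upgrade this to genuine compatibility over $\id_X$, i.e. to produce maps over $X$ rather than merely over a diffeomorphism of $X$; this is where one uses that the equivalence is an equivalence of groupoids over $X$ (equivalently, that the induced map of coarse spaces is the identity), after which $f,g$ are literally over $X$ and, by the uniqueness of connecting arrows, the natural isomorphisms are precisely the stated formulas. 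The final clause, that $X_U\simeq X$ iff $U\to X$ admits a section, then drops out by taking $Z=X$: a map $X\to U$ over $X$ is exactly a section, while the projection $U\to X$ always supplies the map in the other direction.
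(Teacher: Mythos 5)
Your proposal is correct, and since the paper explicitly leaves this proof ``to the reader'' as a straightforward calculation, there is no written argument to diverge from: your reduction of all functoriality, smoothness, and naturality checks to the fact that a \v{C}ech groupoid has at most one morphism between any two objects is exactly the intended calculation, and your ($\Leftarrow$) direction, including the automatic naturality and smoothness of $y \mapsto (y, gf(y))$ and $z \mapsto (z, fg(z))$, is complete as written. You are also right to isolate the one genuine subtlety in ($\Rightarrow$): an unconstrained equivalence $F: X_Y \to X_Z$, $G: X_Z \to X_Y$ only produces smooth maps $F_0$, $G_0$ covering mutually inverse diffeomorphisms $\bar F, \bar G$ of $X$ (your descent argument through the surjective submersion $Y \to X$ is the right one), and in general this \emph{cannot} be corrected to maps over $\mathrm{id}_X$. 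For instance, take $X = S^1 \sqcup S^1$, let $Y$ restrict to the trivial double cover over the first circle and the connected double cover over the second, and let $Z$ be the same data with the two circles swapped; then $X_Y \cong X_Z$ as Lie groupoids via the swap, but a map $Y \to Z$ over $X$ would yield a section of the nontrivial double cover of $S^1$, which does not exist. So the forward implication is only true under the reading you adopt --- equivalence over $X$, equivalently that the induced map of coarse spaces is $\mathrm{id}_X$ --- which is clearly the intended one given the surrounding discussion of the canonical functor $X_U \to X$, and under that reading your argument closes correctly. Note finally that the last clause of the lemma survives even the absolute reading: when $Z = X$ the base correction is available, since $s := G_0 \circ \bar G^{-1}$ satisfies $\pi_U \circ s = \mathrm{id}_X$ and is the desired global section, so your derivation of that clause requires no extra hypothesis.
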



The last example highlights one of the well-known deficiencies of the 2-category of Lie groupoids. The functor $X_U \to X$ is both fully faithful and essentially surjective (in fact, actually surjective), but it fails to be an equivalence.

\subsection{Bibundles and Smooth Stacks}

In the following let $\man_X$ denote the category of {\em manifolds over $X$}, that is  the category whose objects are manifolds $Y$ equipped with a smooth map $Y \to X$, and whose morphisms are smooth maps $Y \to Y'$ making the following triangle commute.
\begin{center}
\begin{tikzpicture}
	\node (LT) at (0, 1) {$Y$};
	\node (LB) at (1, 0) 	 {$X$};
	\node (RT) at (2, 1) {$Y'$};
	\draw [->] (LT) -- node [left] {$$} (LB);
	\draw [->] (LT) -- node [above] {$$} (RT);
	\draw [->] (RT) -- node [right] {$$} (LB);
\end{tikzpicture}
\end{center}

\begin{definition}
 Let $G= (G_1 \rightrightarrows G_0)$ and $H = (H_1 \rightrightarrows H_0)$ be Lie groupoids. A {\em (left principal) bibundle} from $H$ to $G$ is a smooth manifold $P$ together with
 \begin{enumerate}
\item A map $\tau: P \to G_0$, and a surjective submersion $\sigma: P \to H_0$, 
\item Action maps in $\man_{G_0 \times H_0}$
\begin{align*}
	& G_1 \times_{G_0}^{s, \tau} P \to P \\
	&  P \times_{H_0}^{\sigma, t} H_1 \to P
\end{align*}
which we denote on elements as $(g, p) \mapsto g \cdot p$ and $(p, h) \mapsto p \cdot h$,
\end{enumerate}
such that
 \begin{enumerate}
 \item $g_1 \cdot ( g_2 \cdot p) = (g_1 g_2) \cdot p$ for all $(g_1, g_2, p) \in G_1 \times_{G_0}^{s, t}  G_1 \times_{G_0}^{s, \tau} P$
\item $(p \cdot h_1) \cdot h_2 = p \cdot (h_1h_2)$ for all $(p, h_1, h_2) \in P \times_{H_0}^{\sigma, t} H_1 \times_{H_0}^{s, t} H_1$
\item $p \cdot \iota_H(\sigma(p)) = p $ and $\iota_G(\tau(p)) \cdot p = p$ for all $p \in P$. 
\item $g \cdot (p \cdot h) = (g \cdot p) \cdot h$ for all $(g, p, h) \in  G_1 \times_{G_0}^{s, \tau} P  \times_{H_0}^{\sigma, t} H_1 $. 
\item The map 
\begin{align*}
	 G_1 \times_{G_0}^{s, \tau} P & \to P \times_{H_0}^{\sigma, \sigma} P  \\
	 (g, p) & \mapsto (g \cdot p, p)
\end{align*}
is an isomorphism. (The $G$-action is simply transitive)
\end{enumerate}
\end{definition}

Bibundles combine several widely used notions into a single useful concept, as these examples illustrate. 

\begin{example}[Smooth Maps]
Let $X$ and $Y$ be smooth manifolds, viewed as Lie groupoids. Let $P$ be a (left principal) bibundle from $X$ to $Y$. Then $\sigma: P \to X$ is an isomorphism. Thus $P$ is ``the same'' as a smooth map $\tau: X \to Y$. 
\end{example}

\begin{example}[Lie Homomorphisms] Let $G$ and $H$ be Lie groups, thought of as Lie groupoids as in Example \ref{ExampleLieGroups}. Let $P$ be a bibundle from $H$ to $G$. Then $P$ is (non-canonically) isomorphic to $G$ with its left $G$-action. After identifying $P$ with $G$, the right action of $H$ on $P$ is equivalent to a Lie group homomorphism $H \to G$. Thus Lie homomorphisms are ``the same'' as bibundles.  (More precisely, as we will see shortly,  conjugacy classes of Lie homomorphisms correspond to isomorphism classes of bibundles).  
\end{example}

This next example shows where bibundles derive their name.

\begin{example}[Principal Bundles]
	A (left principal) bibundle from a manifold $X$ to a Lie group $G $, viewed as Lie groupoids as in Example \ref{ExampleManAsLieGpd}, is the same as a (left) principal $G$-bundle over $X$.
\end{example} 

\begin{example}\label{ExampleACJBibuns}
	Generalizing this last example, let $Y$ be a manifold with a (left) action of a Lie group $G$. Denote the associated action Lie groupoid by $[Y/G]$. Let $X$ be a manifold. A (left principal) bibundle from $X$ to $[Y/G]$ consists of a (left) principal $G$-bundle $P$ over $X$ together with a $G$-equivariant map to $Y$. In particular, we may take the action of $G$ on $Y = Aut(G)$ to be by left multiplication by the conjugation automorphism, i.e. for $h \in Aut(G)$,  $g \cdot h = c_g \circ h$, where $c_g(g') = g g' g^{-1}$ is the conjugation automorphism. A (left principal) bibundle from $X$ to $[Aut(G)/G]$ is a ``$G$-bibundle'' in the sense of \cite{ACJ05}. 
\end{example}

\begin{example}[Identities] \label{ExampleIdentBibun}
	Let $G$ be a Lie groupoid. There is a $G$-$G$ bibundle given by $P = G_1$ with $\tau = t, \sigma = s$ and the obvious action maps. This is called the {\em identity bibundle} for reasons which will become obvious later. 
\end{example}

\begin{example} If $f: X \to G_0$ is a map, then we can form the pull back bibundle. $f^*G_1 = X \times_{G_0}^{f, s} G_1 \to X$. The induced action of $G_1$ on $f^*G_1$ makes this a bibundle from the trivial groupoid $X$ (with only identity morphisms) to the groupoid $G$.
\end{example}

\begin{example}
	Let $f: H \to G$ be a functor of Lie groupoids. Then we form the bibundle $\langle f\rangle$ as follows. As a space we have $\langle f\rangle = f_0^* G_1$, which we've already seen is a bibundle from $H_0$ to $G$. We need only supply the action of $H_1$. This is given by applying $f_1: H_1 \to G_1$ and using right action of $G_1$ on $f_0^* G_1$. Thus any functor gives rise to a  bibundle. The association $f \mapsto \langle f \rangle$ is known as {\em bundlization}.
\end{example}

\begin{example} \label{ExamplePullbackGroupoid}
If  $f: U \to G_0$ is a submersion, then we may form the pull-back groupoid $f^*G$. The objects consist of $U$, the morphisms consist of $(U \times U)  \times_{G_0 \times G_0} G_1$ with source and target the natural projections. Composition is defined in the obvious way, as a confluence of the composition the pair groupoid of $U$ and of the composition of $G$.
There is a functor from $f^*G$ to $G$ which on object is the original map $U \to G_0$ and on morphisms is the projection $f^*G \to G_1$. In particular, there is a canonical bibundle from $f^*G$  to $G$ given by the bundlization of this functor.  
\end{example}

\begin{remark}
Right principal bibundles can be defined in a similar manner. The only difference being that now $\tau$, instead of $\sigma$, is required to be a surjective submersion and the action of $H$ is simply transitive, i.e.
\begin{equation*}
	P \times_{H_0}^{\sigma, t} H_1 \cong P \times_{G_0}^{\tau, \tau} P.
\end{equation*}
In particular any left-principal bibundle $P$ from $H$ to $G$ gives rise to a right-principal bibundle $\overline P$ from $G$ to $H$, given by swapping the maps $\sigma$ and $\tau$, and pre-composing the action maps with the inverse maps.  
\end{remark}

\begin{definition}
	A {\em bibundle map} is a map $P \to P'$ over $H_0 \times G_0$ which commutes with the $G$- and $H$-actions, i.e. the following diagrams commute.
	\begin{center}
\begin{tikzpicture}[thick]
	\node (LT) at (0,1.5) 	{$G_1 \times_{G_0}^{s, \tau} P $ };
	\node (LB) at (0,0) 	{$G_1 \times_{G_0}^{s, \tau} P '$};
	\node (RT) at (2,1.5) 	{$P$};
	\node (RB) at (2,0)	{$P'$};
	\draw [->] (LT) --  node [left] {$$} (LB);
	\draw [->] (LT) -- node [above] {$$} (RT);
	\draw [->] (RT) -- node [right] {$$} (RB);
	\draw [->] (LB) -- node [below] {$$} (RB);
	
	\node (LT) at (4,1.5) 	{$ P \times_{H_0}^{\sigma, t} H_1 $ };
	\node (LB) at (4,0) 	{$P' \times_{H_0}^{\sigma, t} H_1 $};
	\node (RT) at (6,1.5) 	{$P$};
	\node (RB) at (6,0)	{$P'$};
	\draw [->] (LT) --  node [left] {$$} (LB);
	\draw [->] (LT) -- node [above] {$$} (RT);
	\draw [->] (RT) -- node [right] {$$} (RB);
	\draw [->] (LB) -- node [below] {$$} (RB);
\end{tikzpicture}
\end{center}
\end{definition}

Thus for each pair of groupoids we have a category $\bibun(H,G)$ of bibundles from $H$ to $G$. If $f,g: H \to G$ are two smooth functors between Lie groupoids, then the bibundle maps from $\langle f \rangle$ to $\langle g \rangle$ are in natural correspondence with the smooth natural transformations from $f$ to $g$. In this sense the category $\Lie\gpd(H,G)$ is a subcategory of $\bibun(H, G)$.

\begin{example}
	A left principal bibundle from a Lie group $H$ to a Lie group $G$ always arises as $\langle f \rangle$ for some functor $f: H \to G$.
	\end{example}

\begin{example}
	A left principal bibundle whose target is a space is also always of the form $\langle f \rangle$ for some functor $f: X \to Y$. Hence if $X$ and $Y$ are spaces the is the same as a map of spaces. If $X$ is an action groupoid, then this is just a $G$-invariant map. 
\end{example}


\begin{proposition}[\cite{Lerman08}] \label{PropSectionIsBundlization}
	A bibundle $P$ from $H$ to $G$ admits a section of $\sigma: P \to H_0$ if and only if $P \cong \langle f \rangle$ for some smooth functor $f$. 
\end{proposition}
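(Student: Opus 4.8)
The plan is to prove the two implications separately, the reverse one being essentially formal and the forward one carrying all the content.

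For the ``if'' direction, I would observe that the bundlization $\langle f\rangle = f_0^*G_1$ carries a tautological section of $\sigma$. Since $\sigma\colon \langle f\rangle \to H_0$ is the projection of $H_0\times_{G_0}^{f_0,s}G_1$ onto its first factor, the assignment $x\mapsto (x,\iota_G(f_0(x)))$ is a smooth section: it is well defined because $s(\iota_G(f_0(x))) = f_0(x)$. Any isomorphism of bibundles commutes with $\sigma$, so if $P\cong\langle f\rangle$ then $P$ inherits a section.

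For the ``only if'' direction, suppose $s_0\colon H_0\to P$ is a smooth section of $\sigma$. The crucial tool is the principality axiom (5): the map $(g,p)\mapsto(g\cdot p,p)$ is a diffeomorphism $G_1\times_{G_0}^{s,\tau}P \xrightarrow{\ \sim\ } P\times_{H_0}^{\sigma,\sigma}P$, so composing its inverse with the projection to $G_1$ yields a smooth \emph{division map} $\delta\colon P\times_{H_0}^{\sigma,\sigma}P \to G_1$ characterized by $\delta(p,p')\cdot p' = p$, with $s(\delta(p,p')) = \tau(p')$ and $t(\delta(p,p')) = \tau(p)$. I would then define a functor $f\colon H\to G$ by $f_0 := \tau\circ s_0$ on objects and, on morphisms,
\[
  f_1(h) := \delta\bigl(s_0(t(h))\cdot h,\ s_0(s(h))\bigr).
\]
This is smooth as a composite of smooth maps, and the source/target identities of the two actions give $s(f_1(h)) = f_0(s(h))$ and $t(f_1(h)) = f_0(t(h))$, so $f_1$ lands where it should. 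Functoriality is then a direct computation from the bibundle axioms: the unit axiom (3) gives $f_1(\iota_H(x)) = \iota_G(f_0(x))$ by the uniqueness clause of principality, and for composable $h_1,h_2$ the chain
\[
  (f_1(h_1)f_1(h_2))\cdot s_0(s(h_2)) = s_0(t(h_1))\cdot(h_1h_2),
\]
obtained by alternately applying axioms (1), (4) and (2), forces $f_1(h_1h_2)=f_1(h_1)f_1(h_2)$, again by uniqueness. It then remains to produce the isomorphism $P\cong\langle f\rangle$. I would define $\Psi\colon \langle f\rangle\to P$ by $\Psi(x,g) := g\cdot s_0(x)$, which is well defined since $s(g)=f_0(x)=\tau(s_0(x))$, and smooth. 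Equivariance is immediate: $G$-equivariance is the left-action associativity (1), and $H$-equivariance uses the defining relation $f_1(h)\cdot s_0(s(h)) = s_0(t(h))\cdot h$ together with axiom (4). A smooth two-sided inverse is given by $p\mapsto\bigl(\sigma(p),\delta(p,s_0(\sigma(p)))\bigr)$, again via the division map, so $\Psi$ is an isomorphism of bibundles.

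The main obstacle I anticipate is not any single deep idea but the careful bookkeeping of the left/right conventions and the source--target matching: one must repeatedly verify that each product such as $g\cdot s_0(x)$ or $s_0(y)\cdot h$ is actually defined (i.e.\ that the relevant $\sigma$, $\tau$, $s$, $t$ agree) before invoking an axiom. The one genuinely non-formal point is the smoothness of $f_1$, which is precisely why the principality axiom must be used in its strong form as a \emph{diffeomorphism}: it is this that upgrades the set-theoretic ``unique $g$'' to a smooth division map, making $f_1$ a morphism of manifolds rather than merely a map of underlying sets.
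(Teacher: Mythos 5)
Your proof is correct and is essentially the standard argument: the paper itself states this proposition without proof, citing \cite{Lerman08}, and your construction --- extracting a smooth division map $\delta$ from the principality diffeomorphism, setting $f_0=\tau\circ s_0$ and $f_1(h)=\delta\bigl(s_0(t(h))\cdot h,\,s_0(s(h))\bigr)$, and exhibiting $\Psi(x,g)=g\cdot s_0(x)$ as an equivariant diffeomorphism $\langle f\rangle\cong P$ --- is exactly the argument found there. In particular you correctly identify the one non-formal point, namely that axiom (5) being a diffeomorphism (not merely a bijection) is what makes $\delta$, and hence $f_1$, smooth.
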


Bibundles can be composed, and this gives us a bicategory $\bibun$. If $P$ is a bibundle from $H$ to $G$ and $Q$ is a bibundle from $K$ to $H$, then we define the bibundle $P \circ Q$ as the coequalizer,
\begin{equation*}
	P \times^{\sigma, t}_{H_0} H_1 \times^{s, \tau}_{H_0} Q \rightrightarrows P \times_{H_0}^{\sigma, \tau} Q \to P \circ Q
\end{equation*}
Since $\sigma$ is a surjective submersion, these pull-backs are manifolds and since our action on $Q$ is simply transitive  this coequalizer exists as a smooth manifold. In fact it is a bibundle from $K$ to $G$. See \cite{Lerman08} for details. Equivalence in this bicategory is sometimes referred to as {\em Morita equivalence}. They are characterized as those bibundles which are simultaneously left principal and right principal. The identity bibundle of Example \ref{ExampleIdentBibun} above serves as the identity 1-morphism. If the submersion in Example \ref{ExamplePullbackGroupoid} is surjective, then the pull-back groupoid is easily seen to be Morita equivalent to the original groupoid via the constructed bibundle. 

\begin{example}
Let $G$ and $H$ be Lie groupoids and $P: H \to G$ a left-principal bibundle. If $P$ is also a right-principal bibundle, then we may form a new left principal bibundle $P^{-1}: G \to H$. $P^{-1}$ is the space $P$ with $\tau$ and $\sigma$ switched, and with a right (resp. left) action of $G$ (resp. $H$) induced by the composition of the inversion map and the original action on $P$. In this case we have that $P \circ P^{-1}$ and $P^{-1} \circ P$ are isomorphic to identity bibundles. Is this case $P$ and $P^{-1}$ are Morita equivalences, and this characterizes Morita equivalences. 
\end{example}

\begin{example} As a special case of the above, suppose that  $G$ is a Lie group with a free and transitive action on the manifold $X$. Suppose further that the quotient space $Y$ is a manifold, with smooth quotient map, $q: X \to Y$. If the quotient map admits local sections (so that $X$ is a fiber bundle over $Y$), then we have a bibundle: 
	\begin{center}
\begin{tikzpicture}
\node (A) at (0,1) {$ Y$};
\node (B) at (0,0) { $Y$};
\draw [->] (A.255) -- (B.105);
\draw [->] (A.285) -- (B.75);

\node (C) at (4,1) {$X \times G$};
\node (D) at (4,0) { $X$};
\draw [->] (C.255) -- (D.105);
\draw [->] (C.285) -- (D.75);

\node (E) at (2,1) {$X$};
\draw [->] (E) -- node [above left] {$q$} (B);
\draw [->>] (E) -- node [above right] {$$}(D);
\end{tikzpicture}
\end{center}
with the obvious induced actions. This bibundle, which is the bundlization $\langle q \rangle$ of the induced quotient functor, is simultaneously a left- and right-principal bibundle and hence $Y$ and $[X/G]$ are equivalent in $\bibun$. Conversely, if $Y$ and $[X/G]$ are equivalent in $\bibun$, then the quotient map $q: X \to Y$ necessarily admits local sections. 
\end{example}

\begin{theorem}[Pronk \cite{Pronk96}]
	There are canonical equivalences of bicategories between
$\bibun$, $\stack_{pre}$, and $\Lie\gpd [ W^{-1}] $, where $\stack_{pre}$ is the 2-category of (presentable) smooth stacks (in the surjective submersion topology) and  $\Lie\gpd [ W^{-1}] $ is the (derived) localization of the 2-category of Lie groupoids, functors, and natural transformations with respect to the {\em essential equivalences}.\end{theorem}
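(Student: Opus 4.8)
The plan is to establish the two equivalences $\bibun \simeq \Lie\gpd[W^{-1}]$ and $\bibun \simeq \stack_{pre}$ separately, each by exhibiting an explicit pseudofunctor and invoking a general recognition principle, and then to deduce the third by composition. The backbone for the first is Pronk's calculus of fractions for bicategories, which supplies sufficient conditions for a pseudofunctor $F:\mathcal C \to \mathcal D$ to exhibit $\mathcal D$ as the localization $\mathcal C[W^{-1}]$: namely that $F$ invert every arrow of $W$, be essentially surjective on objects, and be \emph{fully faithful modulo $W$}, in the sense that every $1$-morphism of $\mathcal D$ is equivalent to a fraction built from $F(f)$ and a quasi-inverse of some $F(w)$ with $w \in W$, and that the induced maps on $2$-morphisms are bijective.

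First I would treat $\bibun \simeq \Lie\gpd[W^{-1}]$. The candidate pseudofunctor is bundlization $\langle - \rangle : \Lie\gpd \to \bibun$, the identity on objects, sending a functor $f$ to $\langle f\rangle$ and a natural transformation to the corresponding bibundle map. The first task is to check that bundlization inverts essential equivalences: if $f:H\to G$ is fully faithful and essentially surjective as a smooth functor, then $\langle f\rangle$ is simultaneously left and right principal, hence a Morita equivalence; the already-recorded computation that $f^*G \to G$ is a Morita equivalence for $f$ a surjective submersion is the model case, and the general case reduces to it. The second task is to verify Pronk's calculus-of-fractions axioms for the class $W$ of essential equivalences inside $\Lie\gpd$ — closure under composition, the two-out-of-three property, and above all the existence of the weak pullbacks (comma squares) needed to fill in the fraction diagrams, which come from the pullback-groupoid construction applied along surjective submersions. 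Finally I would show every bibundle $P:H\to G$ is a fraction: since $\sigma:P \to H_0$ is a surjective submersion, pulling $P$ back along $\sigma$ to the pullback groupoid $\sigma^*H$ yields a bibundle admitting a section of its anchor, hence bundlized (by the section criterion of Proposition \ref{PropSectionIsBundlization}) to a functor $f:\sigma^*H \to G$, so that $P$ is represented by the span $H \xleftarrow{w} \sigma^*H \xrightarrow{f} G$ with $w$ the canonical Morita equivalence; an analogous descent argument gives the bijection on $2$-morphisms.

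Next I would treat $\bibun \simeq \stack_{pre}$. Here the pseudofunctor sends a Lie groupoid $G$ to its quotient stack $BG$ — the stackification of the presheaf of groupoids it represents, equivalently the stack of principal $G$-bundles — and sends a bibundle to the morphism of stacks it classifies via the associated-bundle construction. Essential surjectivity is exactly the presentability hypothesis: a presentable stack $\mathcal X$ admits an atlas $X\to\mathcal X$ from a manifold, and then $X\times_{\mathcal X} X \rightrightarrows X$ is a Lie groupoid whose quotient stack recovers $\mathcal X$. The heart of the matter is the equivalence of categories
\begin{equation*}
	\bibun(H,G) \;\simeq\; \hom_{\stack_{pre}}(BH, BG),
\end{equation*}
which I would prove by a $2$-categorical Yoneda/descent argument: a morphism $BH\to BG$ is determined by its restriction along the atlas $H_0 \to BH$ together with descent data along $H_1\rightrightarrows H_0$, and unwinding this datum reproduces precisely the anchors $\tau,\sigma$ and the action maps of a left-principal bibundle.

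I expect the main obstacle to be this hom-category comparison in both of its guises: checking that the localization hom-categories produced by Pronk's fraction calculus agree on the nose with $\bibun(H,G)$, and equivalently that stack morphisms out of $BH$ are classified by descent data amounting to bibundles. The bookkeeping of coherence $2$-cells in the bicategory of fractions, together with the compatibility of the associated-bundle and stackification constructions, is where the real work lies; the remaining axiom checks are formal once the weak pullbacks along surjective submersions are in hand. Rather than reprove Pronk's general machinery I would cite it and concentrate on verifying its hypotheses in the smooth setting, following \cite{Pronk96} and \cite{Lerman08}.
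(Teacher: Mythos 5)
The paper offers no proof of this statement at all --- it is quoted as a known result of Pronk \cite{Pronk96}, whose original argument (with the exposition in \cite{Lerman08}) proceeds exactly as you outline: the bicategorical calculus of fractions applied to the bundlization pseudofunctor, with an arbitrary bibundle $P: H \to G$ realized as a span $H \leftarrow \sigma^*H \to G$ via the pullback groupoid and the section criterion of Proposition \ref{PropSectionIsBundlization}, and the comparison with presentable stacks via atlases and descent for the assignment $G \mapsto BG$. Your sketch is therefore a correct outline of essentially the same approach as the cited source, and nothing in it conflicts with how the paper uses the theorem.
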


\begin{remark} \label{RmkForgetTopology}
There is a forgetful 2-functor $\Lie \gpd \to \gpd$ which which forgets the topology of the Lie groupoid. This functor sends essential equivalences to equivalences and hence extends in an essentially unique way to a 2-functor $\bibun \to \gpd$. This 2-functor is product preserving. 
\end{remark}

\subsection{Transversality for Stacks}

\begin{definition}
	Let $X, Y, Z$ be Lie groupoids and let $G: X \to Y$ and $F: Z \to Y$ be two left-principal bibundles. $F$ and $G$ are {\em transverse} (written $F \pitchfork G)$ if the maps $F \to Y_0$ and $G \to Y_0$ are transverse.
\end{definition}

This extends the usual notion of transversality for maps of spaces. 
\begin{lemma}
	Let $X, Y, Z$ be Lie groupoids and let $G: X \to Y$ and $F: Z \to Y$ be left-principal bibundles. If $F \pitchfork G$ then each of the four pairs of maps
	\begin{enumerate}
	\item  $t \circ p_1 : Y_1 \times^{s, \tau}_{Y_0} F \to Y_0$ and $G \to Y_0$,
	\item $F \to Y_0$ and $ s \circ p_1: Y_1 \times^{t, \tau}_{Y_0} G \to Y_0$,
	\item $s \circ p_1 : Y_1 \times^{t, \tau}_{Y_0} F \to Y_0$ and $G \to Y_0$,   
	\item $F \to Y_0$ and $ t \circ p_1: Y_1 \times^{s, \tau}_{Y_0} G \to Y_0$,
	\end{enumerate}
	are transverse.
\end{lemma}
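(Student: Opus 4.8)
The plan is to reduce the entire statement to a single elementary fact about transversality and submersions, and then to exhibit each of the four new maps as the composite of one of the two original maps $\tau_F \colon F \to Y_0$ and $\tau_G \colon G \to Y_0$ (the structure maps of $F$ and $G$ to $Y_0$) with a surjective submersion. Since $F \pitchfork G$ means precisely that $\tau_F$ and $\tau_G$ are transverse, once the factorizations are in place the conclusion follows formally.

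First I would isolate the following sublemma: if $f \colon A \to Y_0$ and $g \colon B \to Y_0$ are transverse and $a \colon A' \to A$ is a submersion, then $f \circ a$ and $g$ are transverse. This is immediate from the pointwise criterion: at any $(p',q)$ with $f(a(p'))=g(q)=m$ one has $\operatorname{im} d(f\circ a)_{p'}=\operatorname{im} df_{a(p')}$ because $da_{p'}$ is onto, whence $\operatorname{im} df_{a(p')}+\operatorname{im} dg_q=T_m Y_0$ by transversality of $f$ and $g$. The symmetric statement, precomposing the second map, holds as well.

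Next I would produce the factorizations using the left $Y$-actions on $F$ and $G$ together with the inversion of $Y$. The left action map $a_F\colon Y_1\times^{s,\tau}_{Y_0}F\to F$, $(y,f)\mapsto y\cdot f$, satisfies $\tau_F(y\cdot f)=t(y)$, so $t\circ p_1=\tau_F\circ a_F$; this is pair (1). Crucially, $a_F$ is a surjective submersion: by the simple-transitivity axiom (condition (5) of the bibundle) the map $(y,f)\mapsto(y\cdot f,f)$ is an isomorphism onto $F\times^{\sigma,\sigma}_{Z_0}F$, under which $a_F$ becomes the first projection $F\times^{\sigma,\sigma}_{Z_0}F\to F$, the base change of the surjective submersion $\sigma\colon F\to Z_0$ along itself. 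For pair (3) I would twist by inversion: $(y,f)\mapsto y^{-1}\cdot f$ is defined on $Y_1\times^{t,\tau}_{Y_0}F$ and satisfies $\tau_F(y^{-1}\cdot f)=t(y^{-1})=s(y)$, so $s\circ p_1=\tau_F\circ a_F'$ with $a_F'=a_F\circ(\text{inv}\times\operatorname{id})$ again a surjective submersion, since inversion is a diffeomorphism interchanging the $(s,\tau)$- and $(t,\tau)$-fibre products. The analogous maps $a_G,a_G'$ built from the left $Y$-action on $G$ give $t\circ p_1=\tau_G\circ a_G$ and $s\circ p_1=\tau_G\circ a_G'$, which are pairs (4) and (2). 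Applying the sublemma to each of the four factorizations then yields the asserted transversality.

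Nothing here is deep; the content is organizational. The one point requiring care is verifying that the four action maps are genuinely submersions, which is exactly where left-principality (condition (5)) is used, and where the inversion twist must be handled so that the $t$-versions reduce to the $s$-version. Matching each of the four bookkeeping cases to the correct source ($F$ or $G$) and the correct structure map ($s$ or $t$) is the only place an error could slip in.
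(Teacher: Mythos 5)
Your proof is correct and is essentially the paper's own argument: the paper likewise factors $t \circ p_1$ through the action map $Y_1 \times^{s,\tau}_{Y_0} F \to F$, observes that this map is surjective and surjective on tangent spaces so that $d(t\circ p_1)$ and $d\tau$ have the same image, and disposes of the remaining pairs by the $F \leftrightarrow G$ symmetry and the equivalence induced by inversion. Your write-up just makes explicit what the paper leaves implicit --- the simple-transitivity (condition (5)) verification that the action map is a surjective submersion, and the inversion twist reducing the $s$-versions to the $t$-version.
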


\begin{proof}
	By symmetry, it is enough to consider only the first two pairs of maps. Moreover, the transversality of the first pair is easily seen to be equivalent to the second pair, thus it is enough to prove that the first pair of maps are transverse. The map $t \circ p_1 : Y_1 \times^{s, \tau}_{Y_0} F \to Y_0$ factors through the action map 
\begin{equation*}
	Y_1 \times^{s, \tau}_{Y_0} F \to F
\end{equation*}	
	which is surjective and surjective on tangent spaces. Thus the images agree $d(t \circ p_1)(T(Y_1 \times^{s, \tau}_{Y_0} F)) = d\tau (TF)$, and the result follows.  
\end{proof}

Recall that given a left principal bibundle $G$ from the Lie groupoid $X$ to the Lie groupoid $Y$, we may form a right principal bibundle $\overline{G}$ from $Y$ to $X$ by flipping the structure maps $\tau$ and $\sigma$ and by using the inverse maps to switch left and right actions.

\begin{lemma}
If $F \pitchfork G$, then the following coequalizer is a smooth manifold,
\begin{equation*}
	\overline{G} \times^{\tau, t}_{Y_0} Y_1 \times_{Y_0}^{s, \tau} F \rightrightarrows \overline G \times_{Y_0} F \to \overline G \circ F.
\end{equation*}
\end{lemma}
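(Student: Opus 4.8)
The plan is to realize $\overline G \circ F$ as the quotient of the smooth manifold $\overline G \times_{Y_0} F$ by a free and proper $Y$-action, and to produce its smooth structure via Godement's criterion. Three inputs feed into this: transversality (to turn the relevant fibre products into manifolds), the preceding lemma (to make the space of relations a manifold with submersive face maps), and the left-principality of $F$ (to make the $Y$-action principal).

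First I would check that the base $\overline G \times_{Y_0} F$ of the coequalizer is a manifold. The two maps being pulled back are the map $\tau\colon \overline G \to Y_0$ --- which has the same underlying map as $\tau\colon G \to Y_0$, since $\overline G$ is obtained from $G$ by interchanging $\sigma$ and $\tau$ --- and the map $\tau\colon F \to Y_0$. Their transversality is exactly the hypothesis $F \pitchfork G$, so the fibre product exists as a smooth manifold.

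Next I would verify that the total space of relations $\overline G \times^{\tau, t}_{Y_0} Y_1 \times^{s, \tau}_{Y_0} F$ is a manifold. I would assemble it as $\overline G \times_{Y_0} \bigl( Y_1 \times^{s, \tau}_{Y_0} F \bigr)$: the inner fibre product exists because $s\colon Y_1 \to Y_0$ is a surjective submersion, and the outer one exists by the preceding lemma, whose first pair states precisely that $t \circ p_1 \colon Y_1 \times^{s, \tau}_{Y_0} F \to Y_0$ is transverse to $\tau \colon \overline G \to Y_0$. Because $s$ and $t$ are surjective submersions, the two face maps $a, b \colon \overline G \times_{Y_0} Y_1 \times_{Y_0} F \to \overline G \times_{Y_0} F$, given by $a(\overline g, y, f) = (\overline g \cdot y, f)$ and $b(\overline g, y, f) = (\overline g, y \cdot f)$, are surjective submersions.

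Finally I would exhibit $\overline G \circ F$ as a smooth quotient. The diagonal $Y$-action on $\overline G \times_{Y_0} F$ is free and proper, inheriting both properties from its action on the factor $F$: left-principality of $F$ makes $Y_1 \times^{s, \tau}_{Y_0} F \to F \times^{\sigma, \sigma}_{Z_0} F$, $(y, f) \mapsto (y \cdot f, f)$, an isomorphism, so the action is free and the resulting division map $F \times_{Z_0} F \to Y_1$ is smooth, giving properness. Hence $(a, b)$ embeds the space of relations as a closed embedded equivalence relation in $(\overline G \times_{Y_0} F) \times (\overline G \times_{Y_0} F)$ with submersive projection $a$, and Godement's criterion shows the coequalizer $\overline G \circ F$ is a smooth manifold. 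I expect this last step to be the main obstacle: transversality only supplies the smooth structures on the fibre products, while the quotient being Hausdorff and smooth rests on the \emph{principality} of the $Y$-action --- that is, on left-principality of $F$ rather than on transversality --- and the real work is checking that the equivalence relation is closed and embedded (properness). Marrying these two logically independent inputs is the heart of the argument.
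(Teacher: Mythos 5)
Your proof is correct, but it takes a genuinely different route from the paper's. The paper argues locally: left-principal bibundles are locally trivial, so over suitable opens $U \subset X_0$ and $V \subset Z_0$ one has $G|_U \cong Y_1 \times_{Y_0}^{s, g_0} U$ and $F|_V \cong Y_1 \times_{Y_0}^{s, f_0} V$; the hypothesis $F \pitchfork G$ forces $f_0$ and $g_0$ to be transverse, and the coequalizer is then locally identified with the iterated fibre product $U \times_{Y_0}^{g_0, t} Y_1 \times_{Y_0}^{s, f_0} V$, which is a manifold. You instead work globally, exhibiting $\overline G \circ F$ as the quotient of $\overline G \times_{Y_0} F$ by a free principal $Y$-relation and invoking Godement's criterion. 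The two arguments consume the same raw ingredients packaged differently: your smooth division map $F \times_{Z_0} F \to Y_1$ is exactly the principality that, in the paper, underlies the local trivializations. What your version buys is an honest treatment of the global quotient --- closedness and embeddedness of the relation, hence Hausdorffness and smoothness of $\overline G \circ F$ and submersivity of the quotient map --- points the paper's local computation leaves implicit (it never addresses how the local models glue); your argument also runs parallel to the one the paper cites from Lerman for smoothness of ordinary composition $P \circ Q$, with transversality standing in for the surjective submersion $\sigma$. What the paper's version buys is brevity and explicit local charts for $\overline G \circ F$, which are what feed the subsequent example computing transversality against quotient groupoids. One small point of justification in your write-up: the face maps $a, b$ are surjective submersions because \emph{any} groupoid action map $Y_1 \times_{Y_0}^{s, \tau} P \to P$ is one --- conjugating by the diffeomorphism $(y, p) \mapsto (y, y \cdot p)$ turns it into the projection $Y_1 \times_{Y_0}^{t, \tau} P \to P$, a base change of $t$ --- so your appeal to $s$ and $t$ being surjective submersions is sound, though it is invertibility in $Y$ that makes it work, not principality.
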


\begin{proof}
	This is a local question. For each point $x \in X_0$ there exists an open neighborhood $U \subset X_0$ and a map $g_0: U \to Y_0$, so that over $U$ we have $G|_U \cong Y_1 \times_{Y_0}^{s, g_0} U$. Similarly, for each point in $Z_0$, there exists a open neighborhood $f_0: V \subset Z_0$, a map $V \to Y_0$ so that $F|_V \cong Y_1  \times_{Y_0}^{s, f_0} U$. The transversality conditions ensure that $f_0$ and $g_0$ are also transverse. Locally the above equalizer is isomorphic to
	\begin{equation*}
		U \times_{Y_0}^{g_0, t} Y_1 \times_{Y_0}^{s, f_0} V
\end{equation*}
which is again a manifold by our transversality assumptions. 
\end{proof}
A similar calculation shows that $X_1 \times_{X_0} ( \overline G \circ F) \times_{Z_0} Z_1$ is a smooth manifold. The primary reason for introducing the notion of transversality between maps of spaces is that it is a condition which ensures that pullbacks exist as smooth manifolds. The notion of transversality introduced here generalizes this property to the bicategory $\bibun$. 

\begin{proposition}\label{PropPullBackOfBibunExist}
Let $X, Z, Y$ be Lie groupoids and let $G: X \to Y$ and $F: Z \to Y$ be two left-principal bibundles. If $F$ and $G$ are transverse then the pullback exists in $\bibun$. 
\end{proposition}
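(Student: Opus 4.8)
The plan is to build the pullback explicitly as a Lie groupoid $W$ and then check its bicategorical universal property, leaning on the two preceding lemmas to know that all the spaces in sight are smooth manifolds. I would take as objects the manifold $W_0 := \overline{G}\circ F$, which is smooth by the lemma above precisely because $F\pitchfork G$. Recall that it is the manifold underlying the composite bibundle from $Z$ to $X$, so it comes equipped with a map $\tau\colon \overline G\circ F\to X_0$, a surjective submersion $\sigma\colon \overline G\circ F\to Z_0$, and commuting left $X_1$- and right $Z_1$-actions (inherited from the left $X$-action on $\overline G$ and the right $Z$-action on $F$). Since these two actions commute, they assemble into a single action of the product groupoid $X\times Z$ on $\overline G\circ F$ by $(x,z)\cdot e = x\cdot e\cdot z^{-1}$, and I would let $W$ be the associated action groupoid, with $W_1 := X_1\times_{X_0}(\overline G\circ F)\times_{Z_0}Z_1$, source the projection to $\overline G\circ F$, and target the action. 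The remark following the lemma records exactly that $W_1$ is a smooth manifold; since source and target are base-changes of the surjective submersions of $X$ and $Z$, they are again surjective submersions, so $W$ is a Lie groupoid.

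Next I would produce the two projections and the coherence $2$-cell. The map $\tau$ on objects together with the projection $W_1\to X_1$ on morphisms defines a smooth functor $W\to X$, whose bundlization is the projection bibundle $\pi_X\colon W\to X$; symmetrically, $\sigma$ yields $\pi_Z\colon W\to Z$. A point of $\overline G\circ F$ is tautologically represented by a compatible pair $(p,q)\in G\times_{Y_0}F$ (i.e.\ $\tau_G(p)=\tau_F(q)$) modulo the diagonal $Y$-action, which is exactly the datum of a $Y$-arrow identifying the images of $\sigma_G(p)\in X_0$ and $\sigma_F(q)\in Z_0$ inside $Y$. This tautological matching supplies the $2$-isomorphism $\phi\colon G\circ\pi_X\Rightarrow F\circ\pi_Z$ of bibundles from $W$ to $Y$, filling the square.

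The main work, and the step I expect to be the genuine obstacle, is verifying the full \emph{bicategorical} universal property: for every Lie groupoid $T$ with bibundles $A\colon T\to X$ and $B\colon T\to Z$ and a $2$-isomorphism $\psi\colon G\circ A\cong F\circ B$, there must be a bibundle $u\colon T\to W$, unique up to a \emph{unique} $2$-isomorphism, with $\pi_X\circ u\cong A$ and $\pi_Z\circ u\cong B$ compatibly with $\phi$ and $\psi$. I would first construct $u$ directly, its total space being the manifold assembled from $A$, $B$ and the matching data encoded by $\psi$, mapped to $W_0=\overline G\circ F$ by the induced comparison and carrying the evident $T$-action; the transversality hypothesis again ensures this total space is a manifold. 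To see that $u$ is forced and that the witnessing $2$-morphism is itself unique, I would reduce to the case of functors: since $\sigma_G$ and $\sigma_F$ admit local sections, after pulling back along a surjective submersion of $X_0$ and $Z_0$ the bibundles $G$ and $F$ become bundlizations $\langle g\rangle$ and $\langle f\rangle$, and over this cover $W$ is equivalent to the ordinary $2$-pullback (iso-comma) groupoid of $g$ and $f$, whose $2$-universal property in $\Lie\gpd$ is classical. Descending along the cover—using that bibundles and their isomorphisms form a stack on $X_0\times Z_0$—then transports the universal property back to $W$. The delicate point throughout is precisely the $2$-dimensional part (uniqueness of the comparison $1$-morphism up to a unique $2$-isomorphism), which is what separates a genuine bicategorical pullback from a mere equivalence of groupoids and which must be tracked carefully through the descent.
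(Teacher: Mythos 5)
Your construction phase coincides with the paper's proof exactly: the same action groupoid $\Gamma$ with objects $\overline G\circ F$ and morphisms $X_1\times_{X_0}(\overline G\circ F)\times_{Z_0}Z_1$, the same projection functors, and the same tautological $2$-cell coming from simple transitivity of the $Y$-action on $G$. Where you diverge is the universal property. The paper proves it directly and in one pass: given $(T,A,B,\psi)$ it takes the comparison bibundle to have total space $A\times_{T_0}B$ and constructs the structure map to $\overline G\circ F$ by locally lifting along the surjection $G\to X_0$, checking well-definedness via simple transitivity of the $Y$-action together with $Y$-equivariance of $\psi$, and smoothness via local sections of $G\to X_0$. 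You propose the same existence construction but delegate the uniqueness (the $2$-dimensional part) to a reduction to functors: pull back along covers of $X_0$ and $Z_0$ so that $G$ and $F$ become bundlizations, identify $\Gamma$ with the iso-comma groupoid, and invoke its classical universal property. This route is viable and arguably cleaner conceptually, since the paper itself cites Pronk's equivalence $\bibun\simeq\Lie\gpd[W^{-1}]$, which is what makes such a reduction legitimate; what it costs is exactly the bookkeeping you acknowledge at the end.

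Two points in your sketch need repair, one substantive. First, the substantive one: the classical $2$-universal property of the iso-comma groupoid lives in $\Lie\gpd$, where test cones are functors and natural transformations, but the proposition asserts a bi-pullback in $\bibun$, where test cones are bibundles. Localization functors do not preserve bi-limits for free, so this transport is precisely the content to be proved, and your stated descent mechanism is the wrong one: bibundles do not need to descend over $X_0\times Z_0$ at all (replacing $X$ and $Z$ by Morita-equivalent covers changes the diagram only up to equivalence, and bi-limits are invariant under equivalence of diagrams, so no descent is needed on that side). What you actually need is descent in the \emph{test} object: $\bibun(T,\Gamma)$ must be computed as a colimit of $\Lie\gpd(T',\Gamma)$ over covers $T'\to T$ (this is Pronk's bicalculus of fractions, equivalently the anafunctor description), so that an arbitrary test bibundle from $T$ becomes a functor after refining $T$, and the iso-comma's universal property can then be applied and glued. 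With that substitution your argument closes; without it the key step is asserted rather than proved. Second, a minor misattribution: the total space $A\times_{T_0}B$ of the comparison bibundle is a manifold because $\sigma_A$ and $\sigma_B$ are surjective submersions, not because of transversality; transversality is consumed earlier, in making $\overline G\circ F$ and the morphism space of $\Gamma$ smooth. Relatedly, the genuinely delicate smoothness point --- that the map $A\times_{T_0}B\to\overline G\circ F$ induced by $\psi$ is smooth and well defined --- is exactly where the paper works (local sections of $G\to X_0$, simple transitivity), and your sketch glosses it.
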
  

In the above proposition, {\em pull-back} is meant as a weak categorical limit (a.k.a. {\em bi-limit}) of the obvious diagram. See \cite{Street80, Street87} for details concerning such limits. In this case, such a pull-back consists of a Lie groupoid $W$, equipped with bibundles $P_1: W \to X$ and $P_2: W \to Y$, together with an isomorphism of bibundles $G \circ P_1 \cong F \circ P_2: W \to Y$, which is universal for such Lie groupoids. 

\begin{proof}[Proof of Proposition \ref{PropPullBackOfBibunExist}]
	We explicitly construct a pullback. The underlying Lie groupoid is given as follows:	
\begin{itemize}
\item objects: $\overline G \circ F$
\item morphisms: $ X_1 \times^{s, \sigma}_{X_0} (\overline G \circ F) \times^{\sigma, s}_{Z_0} Z_1$
\end{itemize}
with source map given by $p_2$ onto the middle factor and target given by the action. Composition is given by:
\begin{equation*}
	(\alpha,[g , f] , \beta) \circ ( \alpha', [g', f'], \beta') = ( \alpha \circ \alpha', [g,f], \beta \circ \beta').
\end{equation*}
The identities and inverses are given by the obvious maps. Call this Lie groupoid $\Gamma$. 
This Lie groupoid comes equipped with two smooth functors, which we regard as bibundles. The first is a functor $p_1:\Gamma \to X$ and is given on objects by the natural projection $(\overline G \circ F) \to X_0$. On morphisms it is also the projection
\begin{equation*}
X_1 \times^{s, \sigma}_{X_0} (\overline G \circ F) \times^{\sigma, s}_{Z_0} Z_1 \to X_1.
\end{equation*}
One can check that this indeed defines a functor. The functor $p_2: \Gamma \to Z$ is defined similarly. The bundlization of the first functor is a bibundle whose total space is $X_1 \times_{X_0} (\overline G \circ F)$.

Composing the first map with the bibundle $G$ we have,
\begin{align*}
	G \circ (X_1 \times_{X_0} (\overline G \circ F)) &\cong G \times_{X_0 } (\overline G \circ F)\\
	& \cong (G \times_{X_0} \overline G) \circ F \\
	& \cong (\overline G \times_{Y_0} Y_1) \circ F \\
	&\cong \overline G \times_{Y_0} F
\end{align*}
where the later isomorphism follows from the simple transitivity of the $Y$ action on $G$. 
A similar calculation shows that composing the second map with $F$ gives a canonically isomorphic bibundle. 

To prove that $\Gamma$ is the pullback, we must now check the universal property. In particular given a Lie groupoid $W$ and bibundles $f: W \to Z$ and $g: W \to X$, together with an isomorphism of bibundles $\phi: G \circ g\to F \circ f$, we must construct a bibundle $P:W \to \Gamma$ and isomorphisms $g \cong P_1 \circ P$, $f \cong P_2 \circ P$. The total space of $P$ is given by $P = g \times_{W_0} f$, with its canonical map to $W_0$, and diagonal action. We must construct the projection to $\overline G \circ F$. 

The isomorphism $\phi: G \circ g \to F \circ f$ is essential for this map. $\phi$ induces a map,
\begin{equation*}
	G \times_{X_0} g \times_{W_0} f \to (G \circ g) \times_{W_0} f \to (F \circ f) \times_{W_0} f \cong F \circ (Z_1 \times_{Z_0} f) \cong F \times_{Z_0} f
\end{equation*}
Let $(a,b) \in g \times_{W_0} f$. Consider the image of $a$ in $X_0$ under the projection $g \to X_0$. Choose a lift $\tilde a \in G \to X_0$, which always exists since $G \to X_0$ is a surjective map. The above map says that given $\tilde a, a, b$, we get an element in $\tilde b \in F$.

We define the image of $(a,b) \in g \times_{W_0} f$ in $\overline G \circ F$ to be the equivalence class $[\tilde a, \tilde b]$. The only ambiguity in this construction is the choice of the lift $\tilde a$. Since the action of $Y$ is simply transitive on $G$, the choices of $\tilde a$ differ precisely by the action of $Y$. Since $\phi$ is equivariant with respect to the $Y$-action, it follows that we have a well defined element in $\overline G \circ F$. Moreover since the lift $\tilde a$ is given by a section of $G \to X_0$, which can locally be chosen to be smooth, the resulting projection map is smooth. 

The left action on $g \times_{W_0} f$ is given by the usual action map via,
\begin{equation*}
[X_1 \times_{X_0} (\overline G \circ F) \times_{Z_0} Z_1] \times_{\overline G \circ F} [ g \times_{W_0} f] \cong (X_1 \times_{X_0} g) \times_{W_0} (Z_1 \times_{Z_0} f) \to g \times_{W_0} f.
\end{equation*}
One can check that there are canonical isomorphisms $g \cong P_1 \circ P$ as desired $f \cong P_2 \circ P$, and consequently that $\Gamma$ satisfies the universal property of a pullback.
\end{proof}

\begin{example}
 If $X, Y, Z$ are manifolds, then transversality is transversality in the usual sense and the pullback is the usual pullback.  More generally if $X$ and $Z$ are manifolds and $Y = [W/G]$ is a quotient Lie groupoid, then locally in $X$ a bibundle to $Y$ is given by a $G$-equivariant map $f:X \times G \to W$, or equivalently by a map $X \to W$. (This is only the {\em local} picture. Globally these maps $f$ are glued together by the action of $G$ on $W$. There is usually no global map.)  If $y \in W$ is a point which is in the image of the corresponding (local) maps $f:X \to W$ and $g:Z \to W$, then transversality at $y$ is equivalent to the identity: 
 \begin{equation*}
	df(T_xX) + dg(T_zZ) + T_y\cO_G(y) = T_yW
\end{equation*}
where $\cO_G(y)$ is the $G$-orbit through the point $y$. 
\end{example}

\begin{definition}
	A morphism $F \in \bibun(X,Y)$ is called {\em representable} if for all manifolds $M$ and all maps $G \in \bibun(M, Y)$, the pullback exists and is equivalent to a manifold.
\end{definition}

\begin{example}
	Let $X, Y, Z$ be Lie groups thought of as Lie groupoids with one object. Then $F$ and $G$ are equivalent to group homomorphisms and are always transverse. The pullback is the action groupoid of  $X \times Z$ acting on the space $Y$ by $(x,z) \cdot y = G(x) y F(z)^{-1}$. 
\end{example}

\begin{example}
An important special case of the previous example is when $X = pt$ corresponds to the trivial group, and  the homomorphism $Z \to Y$ corresponds to a closed embedding of Lie groups. In this case the action is free and the quotient is a manifold. Thus the groupoid of $Z$ acting on $Y$ is equivalent to the quotient space. In this case the map $Z \to Y$ is representable. 
\end{example}

\begin{example}
 Let $X=(X_1 \rightrightarrows X_0)$ be a groupoid. We may view $X_0$ as a Lie groupoid with only identity morphisms. Then there is the canonical bibundle $X_0 \to X$, which is the bundlization of the inclusion functor $X_0 \subset X$. If $M$ is any manifold with a bibundle $F \in \bibun(M, X)$, then the pullback is canonically isomorphic to the total space of $F$, viewed as a manifold. In particular, the pullback of $X_0 $ with itself over $X$ is the space $X_1$, thought of as a Lie groupoid with only identity morphisms. 
\end{example}

\begin{definition}
Let $F \in \bibun(X, Y)$. $F$ is a {\em covering bibundle} if it is representable and
 the map $\tau: F \to Y_0$ is a surjective submersion. 
\end{definition}

\begin{remark}
A bibundle  $F \in \bibun(X, Y)$ such that the map $\tau: F \to Y_0$ is a surjective submersion is transverse to every bibundle $G \in \bibun(Z, Y)$. 
\end{remark}

\begin{example}
For any groupoid $X=(X_1 \rightrightarrows X_0)$, the canonical bibundle from $X_0$ to $X$ is a covering bibundle. 
\end{example}

\section{2-Groups in Stacks}

\subsection{2-Groups}

Groups are pervasive in all subjects of mathematics and are an important and well studied subject. 2-Groups are a categorification of the notion of group, and have been playing an increasingly important role in many areas of mathematics and even physics. Recall the following slightly non-standard but equivalent definition of a group. 
Let $(G, 1, \cdot)$ be a monoid. We say $G$ is a {\em group} if the map,
\begin{align*}
	G \times G & \to G \times G \\
	(x,y) & \mapsto (x, x \cdot y)
\end{align*}
is a bijection. The inverse of this map allows one to find an element $g^{-1}$ for each element $g$ such that $g g^{-1} = g^{-1} g = 1$. Categorifying this definition yields the most succinct definition of 2-group of which I am aware. 

\begin{definition} \label{def:2-group}
	A monoidal category $(M, \otimes, 1, a, \ell, r)$ is a {\em 2-group} if the functor,
	\begin{equation*}
		(p_1, \otimes): M \times M \to M \times M
\end{equation*}
is an equivalence of categories, where $p_1$ is projection onto the first factor. The 2-category of 2-groups is the full sub-bicategory of the bicategory of monoidal categories whose objects consist of the 2-groups. 
\end{definition}

There are many equivalent descriptions of 2-groups which have arisen in various branches of mathematics. While the precise history of 2-groups is too intricate and convoluted to be done justice in this article, a few key highlights are in order. 
One of the earliest appearances of 2-groups arose in topology, without the aid of (higher) category theory. Since a 2-group is automatically a groupoid, its simplicial nerve will be a Kan simplicial set. Hence the geometric realization of a 2-group is automatically a homotopy 1-type (i.e. $\pi_i = 0$ at all base points for all $i > 1$). The geometric realization of a monoidal category is well known to be an $A_\infty$-space, and for 2-groups it is group-like. Thus it may be de-looped once to obtain a pointed connected homotopy 2-type $B|M|$. The (pointed) mapping spaces between pointed connected homotopy 2-types are automatically homotopy 1-types, and so by replacing the mapping space with its fundamental groupoid we obtain a bicategory which captures essentially all the homotopical information of homotopy 2-types. This bicategory is equivalent to the bicategory of 2-groups, and so the study of pointed connected 2-types (going back to the work of Whitehead and Mac Lane in the 1940's and 1950's) can be regarded as one of the earliest studies of 2-groups. 

It is well known that small monoidal categories can be strictified, that is replaced with equivalent monoidal categories where associativity and unit identities are satisfied on the nose. Doing this to a 2-group yields a so-called ``categorical group'', i.e. a (strict) group object in categories. A construction, known in the 1960s, shows that such categorical groups are essentially the same thing as {\em crossed modules}, a concept introduced by J. H. C. Whitehead in 1946 and later used by Whitehead and Mac Lane to classify pointed connected homotopy 2-types. 

Finally, another method of studying 2-groups is via {\em skeletalization} (introduced for 2-groups in \cite{BL04}) in which the 2-group is replaced by an equivalent 2-group which is skeletal\footnote{A category $C$ is skeletal if for all objects $x, y \in C$, the property $x \cong y$ implies $x=y$}. This yields a particularly simple description of each 2-group in terms of invariants: two ordinary groups $\pi_1$, $\pi_2$, and certain other data known collectively as the k-invariant. This classification is in direct correspondence with the usual classification of connected pointed 2-types in terms of Postnikov data.

\subsection{2-Groups in General Bicategories}

Monoid and group objects can be defined in any category with finite products, and a similar statement holds true for 2-groups. Following Baez and Lauda \cite{BL04} we introduce 2-group objects in arbitrary bicategories with finite products. Such a ``2-group'' consists of an object, $G$, together with a multiplication 1-morphism $m: G \times G \to G$, a unit 1-morphism $e: 1 \to G$, and several coherence 2-isomorphisms. Additionally it must satisfy a property which ensures that a coherent inverse map may be chosen. This is essentially a mild generalization of the definition of ``coherent  2-group objects'' defined in \cite{BL04}, modified to make sense in an arbitrary bicategory.

\begin{definition}[\cite{BL04}] \label{DefnWeak2group}
 Let $\sC$ be a bicategory with finite products. A {\em  2-group} in $\sC$ consists of an object $G$ together with 1-morphisms $e:1 \to G$,  $m: G \times G \to G$, 
 and invertible 2-morphisms: 
\begin{align*}
	a : & \; m \circ (m \times id) \to m \circ (id \times m) \\
	\ell : &\;  m \circ (e \times id)  \to id \\
	r: & \; m \circ (id \times e) \to id \\
\end{align*}
  such that 
\begin{equation*}
(p_1, m) : G \times G \to G \times G
\end{equation*}
  is an equivalence in $\sC$ and the diagrams in Figures \ref{fig:pentagon} and \ref{fig:triangle} commute.

\begin{figure}[h]
	\caption{The `Pentagon' Identity}
	  \begin{center}
	\begin{tikzpicture}[thick]
		\node (LT) at (0,2) 	{$[m \circ (m \times id)] \circ (m \times id \times id) $ };
		\node (LTa) at (0,1) 	{$m \circ [(m \times id) \circ (m \times id \times id)] $ };
		\node (LB) at (1,-1) 	{$m \circ [(m \times id) \circ (id \times m \times id)]  $};
		\node (LBa) at (1,-2) 	{$[m \circ (m \times id)] \circ (id \times m \times id)  $};
		\node (RT) at (8,2) 	{$[m  \circ (id \times m)] \circ (id \times id \times m)$};
		\node (RTa) at (8,1) 	{$m  \circ [(id \times m) \circ (id \times id \times m)]$};

		\node (MT) at (1,4) {$[m \circ (id \times m)] \circ (m \times id \times id) $};
		\node (MTa) at (7,4) {$[m \circ (m \times id)] \circ (id \times id \times m)  $};

		\node (RB) at (7,-1)	{$m \circ [(id \times m) \circ (id \times m \times id)] $};
		\node (RBa) at (7,-2)	{$[m \circ (id \times m)] \circ (id \times m \times id) $};
		\draw [->] (LT) -- (LTa);
		\draw [->] (LB) -- (LBa);
		\draw [->] (RBa) -- (RB);
		\draw [->] (LTa) --  node [left] {$m \circ (a \times id) $} (LB);
		\draw [->] (LT) -- node [above left] {$a \circ (m \times id \times id) $} (MT);
		\draw [->] (MT) -- (MTa);
		\draw [->] (MTa) to node [above right] {$a \circ (id \times id \times m) $} (RT);
		\draw [->] (RT) -- (RTa);
		\draw [->] (RB) -- node [right] {$m \circ (id \times a) $} (RTa);
		\draw [->, in = 200, out = 340] (LBa) to node [below] {$a \circ (id \times m \times id) $} (RBa);
	\end{tikzpicture}
	\end{center}
	 \label{fig:pentagon}	
\end{figure}

\begin{figure}[h] 
	\caption{The `Triangle' Identity}
	\begin{center}
	\begin{tikzpicture}[thick]
		\node (LT) at (0,1.5) 	{$ m\circ [(m  \circ (id \times e)) \times id]  $ };
		\node (LTa) at (0,2.5) 	{$ [m\circ (m \times id)] \circ (id \times e \times id)  $ };

		\node (RT) at (8,1.5) 	{$m  \circ [id  \times (m \circ ( e \times id)) ]$};
		\node (RTa) at (8,2.5) 	{$[m  \circ (id  \times m)] \circ (id \times e \times id) $};
		\node (B) at (4,0)	{$m \circ (id \times id)$};

		\draw [->] (LT) -- (LTa);
		\draw [->] (RTa) -- (RT); 
		\draw [->] (LT) --  node [below left] {$m \circ (r \times id) $} (B);
		\draw [->] (LTa) -- node [above] {$a \circ (id \times e \times id) $} (RTa);
		\draw [->] (RT) -- node [below right] {$m \circ (id \times \ell) $} (B);
	\end{tikzpicture}
	\end{center}
	\label{fig:triangle}
\end{figure}

The names of these diagrams have been chosen so as to correspond to the names of diagrams when $\sC$ is a strict 2-category. Thus the ``pentagon'' identity is no longer pentagonal in shape. 
The unlabeled arrows are the canonical isomorphisms given from associativity and products in $\sC$. 
\end{definition}

Just as the notion of 2-group presented in \cite{BL04} extends to an arbitrary bicategory $\sC$, so too do the notions of homomorphism and 2-homomorphism. Homomorphisms and 2-homomorphisms compose making a bicategory of 2-groups in $\sC$. A direct calculation shows that all 2-homomorphisms are invertible\footnote{Thus the category of 2-groups is an example of an $(\infty,1)$-category.}. 

\begin{definition}
Let $G$ and $G'$ be 2-groups in $\sC$. A {\em homomorphism} of 2-groups $G \to G'$ consists of:
\begin{itemize}
\item A 1-morphism $F: G \to G'$, 
\item 2-isomorphisms: $F_2: m' \circ (F \times F) \to F \circ m$ and  $F_0:   e' \to F \circ e$,
\end{itemize}
such that the three diagrams in Figures \ref{fig:2grpHom1}, \ref{fig:2grpHom2}, and \ref{fig:2grpHom3} commute. In these diagrams the unlabeled arrows are the canonical isomorphisms given from associativity and products in $\sC$.
\begin{figure}[ht]
	\caption{Axiom 1 for 2-Group homomorphisms.}
	\begin{center}		
	\begin{tikzpicture}[thick]
		\node (LT) at (0,5) 	{$[m' \circ (m' \times id)] \circ (F \times F \times F)$};
		\node (LMZ) at (0,3) 	{$[m' \circ (id \times m')] \circ (F \times F \times F)$};
		\node (LMY) at (0,2) 	{$m' \circ [ F  \times (m' \circ (F \times F ) ]$};
		\node (LB) at (0,0)	{$m' \circ [ F \times (F \circ m)]$};
		\node (LMB) at (1,-1) 	{$[m' \circ (F \times F)] \circ (id \times m)$ };
		\node (RMB) at (7,-1) {$[F \circ m] \circ (id \times m)$};
		\node (RB) at (8,0) 	{$F \circ [m \circ (id \times m)]$};

		\node (LMT) at (1,6)	{$m' \circ [( m' \circ (F \times F)) \times F]$};
		\node (RMT) at (7,6) 	{$ m' \circ [ (F \circ m) \times F ]$};
		\node (RT) at (8,5) 	{$[m' \circ (F \times F)] \circ (m \times id)$};
		\node (RMZ) at (8,3) 	{$[F \circ m] \circ (m \times id)$};
		\node (RMY) at (8,2)	{$F \circ [ m \circ (m \times id)]$};

		\draw [->] (LT) --  node [left] {$a' \circ (F \times F \times F)$} (LMZ);
		\draw [->] (LMZ) --  node [left] {$$} (LMY);
		\draw [->] (LMY) --  node [left] {$m' \circ (F \times F_2)$} (LB);
		\draw [->] (LB) -- node [below] {$$} (LMB);
		\draw [->] (LMB) -- node [below] {$F_2 \circ (id \times m)$} (RMB);
		\draw [->] (RMB) -- node [below] {$$} (RB);

		\draw [->] (LT) -- node [above] {$$} (LMT);
		\draw [->] (LMT) -- node [above] {$m' \circ (F_2 \times F)$} (RMT);
		\draw [->] (RMT) -- node [above] {$$} (RT);
		\draw [->] (RT) -- node [right] {$F_2 \circ (m \times id)$} (RMZ);
		\draw [->] (RMZ) -- node [right] {$$} (RMY);
		\draw [->] (RMY) -- node [right] {$F \circ a$} (RB);
		\end{tikzpicture}
	\end{center}	
	\label{fig:2grpHom1}
\end{figure}

\begin{figure}[ht]
	\caption{Axiom 2 for 2-Group homomorphisms.}
	\begin{center}
	\begin{tikzpicture}[thick]
		\node (LMZ) at (0,3) 	{$[m' \circ (e' \times id)] \circ (id_1 \times F)$};
		\node (LMY) at (0,2) 	{$[m' \circ (id \times F)] \circ (e' \times id)$};
		\node (LB) at (0,0)	{$[ m' \circ (id \times F) ] \circ [ (F \circ e) \times id]$};
		\node (LMB) at (1,-1) {$[m' \circ (F \times F)] \circ (e \times id)$ };
		\node (RMB) at (7,-1) {$[F \circ m] \circ (e \times id)$};
		\node (RB) at (8,0) 	{$F \circ [ m \circ (e \times id)]$};
		\node (RMY) at (8,2)	{$F \circ id$};
		\node (RMZ) at (8,3) 	{$(id_1 \times F)$};

		\draw [->, out = 30, in = 150] (LMZ)   to node [above] {$\ell' \circ (id_1 \times F)$} (RMZ);
		\draw [->] (LMZ) --  node [left] {$$} (LMY);
		\draw [->] (LMY) --  node [right] {$[m' \circ (id \times F)] \circ (F_0 \times id)$} (LB);
		\draw [->] (LB) -- node [below] {$$} (LMB);
		\draw [->] (LMB) -- node [below] {$F_2 \circ (e \times id)$} (RMB);
		\draw [->] (RMB) -- node [below] {$$} (RB);
		\draw [<-] (RMZ) -- node [right] {$$} (RMY);
		\draw [<-] (RMY) -- node [right] {$F \circ \ell$} (RB);
	\end{tikzpicture}
	\end{center}
	\label{fig:2grpHom2}
\end{figure}

\begin{figure}[ht]
	\caption{Axiom 3 for 2-Group homomorphisms.}
	\begin{center}
	\begin{tikzpicture}[thick]
		\node (LMZ) at (0,3) 	{$[m' \circ (id \times e')] \circ (F \times id_1)$};
		\node (LMY) at (0,2) 	{$[m' \circ (F \times id)] \circ (id \times e')$};
		\node (LB) at (0,0)	{$[ m' \circ (F\times id) ] \circ [ id \times (F \circ e)]$};
		\node (LMB) at (1,-1) {$[m' \circ (F \times F)] \circ (id \times e)$ };
		\node (RMB) at (7,-1) {$[F \circ m] \circ (id \times e)$};
		\node (RB) at (8,0) 	{$F \circ [ m \circ (id \times e)]$};
		\node (RMY) at (8,2)	{$F \circ id$};
		\node (RMZ) at (8,3) 	{$id \circ (F \times id_1)$};

		\draw [->, out = 30, in = 150] (LMZ)   to node [above] {$r' \circ ( F \times id_1)$} (RMZ);
		\draw [->] (LMZ) --  node [left] {$$} (LMY);
		\draw [->] (LMY) --  node [right] {$[m' \circ (F \times id)] \circ (id \times F_0)$} (LB);
		\draw [->] (LB) -- node [below] {$$} (LMB);
		\draw [->] (LMB) -- node [below] {$F_2 \circ (id \times e)$} (RMB);
		\draw [->] (RMB) -- node [below] {$$} (RB);
		\draw [<-] (RMZ) -- node [right] {$$} (RMY);
		\draw [<-] (RMY) -- node [right] {$F \circ r$} (RB);
	\end{tikzpicture}
	\end{center}
	\label{fig:2grpHom3}
\end{figure}

\end{definition}

\begin{definition}
Given two homomorphisms $F, K: G \to G'$ between 2-groups in $\sC$, a 2-homomorphism $\theta: F  \Rightarrow K$ is a 2-morphism such that the diagrams in Figure \ref{fig:2grp2homs} commute.
\begin{figure}[ht]
	\caption{Axioms of 2-Homomorphisms}
	\begin{center}
	\begin{tikzpicture}[thick]
		\node (LT) at (0,1.5) 	{$m' \circ (F \times F)$ };
		\node (LB) at (0,0) 	{$F \circ m$};
		\node (RT) at (5,1.5) 	{$m' \circ (K \times K)$};
		\node (RB) at (5,0)	{$K \circ m$};
		\draw [->] (LT) --  node [left] {$F_2$} (LB);
		\draw [->] (LT) -- node [above] {$m' \circ ( \theta \times \theta)$} (RT);
		\draw [->] (RT) -- node [right] {$K_2$} (RB);
		\draw [->] (LB) -- node [below] {$\theta \circ m$} (RB);
	\end{tikzpicture}
	\end{center}
	\begin{center}
	\begin{tikzpicture}[thick]
		\node (LT) at (1.5,1.5) 	{$e'$ };
		\node (LB) at (0,0) 	{$F \circ e$};
		\node (RT) at (3,1.5) 	{$$};
		\node (RB) at (3,0)	{$K \circ e$};
		\draw [->] (LT) --  node [above left] {$F_0$} (LB);
		\draw [->] (LT) -- node [above right] {$K_0$} (RB);
		\draw [->] (LB) -- node [below] {$\theta \circ e$} (RB);
	\end{tikzpicture}
	\end{center}
	\label{fig:2grp2homs}
\end{figure}

\end{definition}

\begin{definition}
	Let $\sC$ be a bicategory with finite products and $G$ be a 2-group in $\sC$. Let $X$ be an object in $\sC$. A {\em (left) $G$-action} on  $X$ consists of:
	\begin{itemize}
\item A 1-morphism $f: G \times X \to X$,
\item invertible 2-morphisms:
\begin{align*}
	a_f : & \; f \circ (m \times id) \to f \circ (id \times f) \\
	\ell_f : &\;  f \circ (e \times id)  \to id 
\end{align*}
\end{itemize}
such that the diagrams in Figures \ref{fig:Pent2GrpAct} and \ref{fig:Tri2GrpAct} commute. 
\begin{figure}[ht]
	\caption{The `Pentagon' Identity for 2-Group Actions.}
	\begin{center}
	\begin{tikzpicture}[thick]
		\node (LT) at (0,2) 	{$[f \circ (m \times id)] \circ (m \times id \times id) $ };
		\node (LTa) at (0,1) 	{$f \circ [(m\circ (m \times id) ) \times id] $ };
		\node (LB) at (1,-1) 	{$f \circ [(m \circ (id \times m)) \times id]  $};
		\node (LBa) at (1,-2) 	{$[f \circ (m \times id)] \circ (id \times m \times id)  $};
		\node (RT) at (8,2) 	{$[f  \circ (id \times f)] \circ (id \times id \times f)$};
		\node (RTa) at (8,1) 	{$f  \circ [id \times (f \circ (id \times f))]$};

		\node (MT) at (1,4) {$[f \circ (id \times f)] \circ (m \times id \times id) $};
		\node (MTa) at (7,4) {$[f \circ (m \times id)] \circ (id \times id \times f)  $};

		\node (RB) at (7,-1)	{$f \circ [id \times (f \circ (m \times id))] $};
		\node (RBa) at (7,-2)	{$[f \circ (id \times f)] \circ (id \times m \times id) $};
		\draw [->] (LT) -- (LTa);
		\draw [->] (LB) -- (LBa);
		\draw [->] (RBa) -- (RB);
		\draw [->] (LTa) --  node [left] {$f \circ (a \times id) $} (LB);
		\draw [->] (LT) -- node [above left] {$a_f \circ (m \times id \times id) $} (MT);
		\draw [->] (MT) -- (MTa);
		\draw [->] (MTa) to node [above right] {$a_f \circ (id \times id \times f) $} (RT);
		\draw [->] (RT) -- (RTa);
		\draw [->] (RB) -- node [right] {$f \circ (id \times a_f) $} (RTa);
		\draw [->, in = 200, out = 340] (LBa) to 
			node [below] {$a_f \circ (id \times m \times id) $} (RBa);
	\end{tikzpicture}
	\end{center}
	\label{fig:Pent2GrpAct}
\end{figure}

\begin{figure}[ht]
	\caption{The `Triangle' Identity for 2-Group Actions.}
	\begin{center}
	\begin{tikzpicture}[thick]
		\node (LT) at (0,1.5) 	{$ f \circ [(m  \circ (id \times e)) \times id]  $ };
		\node (LTa) at (0,2.5) 	{$ [f\circ (m \times id)] \circ (id \times e \times id)  $ };

		\node (RT) at (8,1.5) 	{$f  \circ [id  \times (f \circ ( e \times id)) ]$};
		\node (RTa) at (8,2.5) 	{$[f  \circ (id  \times f)] \circ (id \times e \times id) $};
		\node (B) at (4,0)	{$f \circ (id \times id)$};

		\draw [->] (LT) -- (LTa);
		\draw [->] (RTa) -- (RT); 
		\draw [->] (LT) --  node [below left] {$f \circ (r \times id) $} (B);
		\draw [->] (LTa) -- node [above] {$a_f \circ (id \times e \times id) $} (RTa);
		\draw [->] (RT) -- node [below right] {$f \circ (id \times \ell_f) $} (B);
	\end{tikzpicture}
	\end{center}
	\label{fig:Tri2GrpAct}
\end{figure}

\end{definition}

\begin{definition}
	Let $\sC$ be a bicategory with finite products, $G$ a 2-group in $\sC$ and $X$ and $Y$ two objects in $\sC$ equipped with $G$-actions. A {\em $G$-equivariant 1-morphism} from $X$ to $Y$ consists of:
	\begin{itemize}
\item a 1-morphism $g: X \to Y$ in $\sC$,  
\item invertible 2-isomorphisms in $\sC$: $\phi : f_Y \circ (1 \times g) \to g \circ f_X$
\end{itemize}
Such that the diagrams in Figures \ref{fig:Equiv1Mor1} and \ref{fig:Equiv1Mor2} commute.
\begin{figure}[ht]
	\caption{Axiom 1 of Equivariant 1-Morphism.}
	\begin{center}
	\begin{tikzpicture}[thick]
		\node (LT) at (0,5) 	{$[f_Y \circ (m \times id)] \circ (1 \times 1 \times g)$};
		\node (LMZ) at (0,3) 	{$[f_Y \circ (id \times f_Y)] \circ (1 \times 1 \times g)$};
		\node (LMY) at (0,2) 	{$f_Y \circ [ 1  \times (f_Y \circ (1 \times g )) ]$};
		\node (LB) at (0,0)	{$f_Y \circ [ 1 \times (g \circ f_X)]$};

		\node (LMB) at (1,-1) 	{$[f_Y \circ (1 \times g)] \circ (id \times f_X)$ };
		\node (RMB) at (7,-1) {$[g \circ f_X] \circ (id \times f_X)$};
		\node (RB) at (8,0) 	{$g \circ [f_X \circ (id \times f_X)]$};
		\node (LMT) at (1,6)	{$f_Y \circ [( m \circ (1 \times 1)) \times g]$};
		\node (RMT) at (7,6) 	{$ f_Y \circ [ (1 \circ m) \times g ]$};
		\node (RT) at (8,5) 	{$[f_Y \circ (1 \times g)] \circ (m \times id)$};
		\node (RMZ) at (8,3) 	{$[g \circ f_X] \circ (m \times id)$};
		\node (RMY) at (8,2)	{$g \circ [ f_X \circ (m \times id)]$};

		\draw [->] (LT) --  node [left] {$a_{f_Y} \circ (F \times F \times F)$} (LMZ);
		\draw [->] (LMZ) --  node [left] {$$} (LMY);
		\draw [->] (LMY) --  node [left] {$f_Y \circ (1 \times \phi)$} (LB);
		\draw [->] (LB) -- node [below] {$$} (LMB);
		\draw [->] (LMB) -- node [below] {$\phi \circ (id \times f_X)$} (RMB);
		\draw [->] (RMB) -- node [below] {$$} (RB);
		\draw [->] (LT) -- node [above] {$$} (LMT);
		\draw [->] (LMT) -- node [above] {$$} (RMT);
		\draw [->] (RMT) -- node [above] {$$} (RT);

		\draw [->] (RT) -- node [right] {$\phi \circ (m \times id)$} (RMZ);
		\draw [->] (RMZ) -- node [right] {$$} (RMY);
		\draw [->] (RMY) -- node [right] {$g \circ a_{f_X}$} (RB);
		\end{tikzpicture}
	\end{center}
	\label{fig:Equiv1Mor1}
\end{figure}

\begin{figure}[ht]
	\caption{Axiom 2 of Equivariant 1-Morphism.}
	\begin{center}
	\begin{tikzpicture}[thick]
		\node (LMZ) at (0,3) 	{$[f_Y \circ (e \times id)] \circ (id \times g)$};
		\node (LMY) at (0,2) 	{$[f_Y \circ (id \times g)] \circ (e \times id)$};
		\node (LB) at (0,0)	{$[ f_Y \circ (id \times g) ] \circ [ (1 \circ e) \times id]$};
		\node (LMB) at (1,-1) {$[f_Y \circ (1 \times g)] \circ (e \times id)$ };
		\node (RMB) at (7,-1) {$[g \circ f_X] \circ (e \times id)$};
		\node (RB) at (8,0) 	{$g \circ [ f_X \circ (e \times id)]$};
		\node (RMY) at (8,2)	{$g \circ id$};
		\node (RMZ) at (8,3) 	{$(id \times g)$};

		\draw [->, out = 30, in = 150] (LMZ)   to node [above] 
			{$\ell_{f_Y} \circ (id \times g)$} (RMZ);
		\draw [->] (LMZ) --  node [left] {$$} (LMY);
		\draw [->] (LMY) --  node [left] {$$} (LB);
		\draw [->] (LB) -- node [below] {$$} (LMB);
		\draw [->] (LMB) -- node [below] {$\phi \circ (e \times id)$} (RMB);
		\draw [->] (RMB) -- node [below] {$$} (RB);
		\draw [<-] (RMZ) -- node [right] {$$} (RMY);
		\draw [<-] (RMY) -- node [right] {$F \circ \ell_{f_X}$} (RB);
	\end{tikzpicture}
	\end{center}
	\label{fig:Equiv1Mor2}
\end{figure}
\end{definition}

\begin{definition}
	Given two $G$-equivariant 1-morphisms $g, g': X \to Y$, an {\em equivariant 2-morphism} $\rho: g \to g'$ is a 2-isomorphism such that the following square commutes:	
	\begin{center}
\begin{tikzpicture}[thick]
	\node (LT) at (0,1.5) 	{$f_Y \circ (1 \times g)$ };
	\node (LB) at (0,0) 	{$g \circ f_X$};
	\node (RT) at (5,1.5) 	{$f_Y \circ (1 \times g')$};
	\node (RB) at (5,0)	{$g' \circ f_X$};
	\draw [->] (LT) --  node [left] {$\phi$} (LB);
	\draw [->] (LT) -- node [above] {$f_Y \circ ( 1 \times \rho)$} (RT);
	\draw [->] (RT) -- node [right] {$\phi'$} (RB);
	\draw [->] (LB) -- node [below] {$\rho \circ f_X$} (RB);
\end{tikzpicture}
\end{center}
\end{definition}

\subsection{Abelian Groups in Bicategories}

The description in terms of monoidal categories makes it clear that there are two related notions of 2-group which generalize the notion of abelian group: braided 2-groups and symmetric 2-groups. In this work we will only be interested in the later, most highly commutative structure. A braided monoidal category is a monoidal category $G$, equipped with natural isomorphisms $\beta_{x,y}:x \otimes y \to y \otimes x$, which satisfy the requirement that  two different hexagonal diagrams commute. A symmetric monoidal category further satisfies the condition: $
\beta_{x, y} \beta_{y, x} = id$. 

 Following the discussion in \cite{JS93}, if the above symmetry equation is satisfied, then the hexagonal diagrams become redundant: only one is necessary, the other is a consequence.  Thus if one were interested only in defining symmetric monoidal categories and not braided monoidal categories, one could omit one of the hexagonal diagrams from the definition. This is the approach we take here.

\begin{definition}
Let $\sC$ be a bicategory with finite products. An {\em abelian 2-group} in $\sC$ consists of a group $(G, e, m, I, a, \ell, r)$ in $\sC$, together with a 2-isomorphism $\beta: m \to m \circ \tau$, where $\tau: G \times G \to G \times G$ is the ``flip'' 1-morphism in $\sC$, such that the following diagrams commutes (here the unlabeled arrows are canonical 2-morphisms from $\sC$):
\begin{center}
\begin{tikzpicture}[thick]
	\node (LT) at (0,1.5) 	{$m \circ \tau$ };
	\node (LB) at (0,0) 	{$m$};
	\node (RT) at (3,1.5) 	{$( m \circ \tau) \circ \tau$};
	\node (RB) at (3,0)	{$m$};
	\draw [<-] (LT) --  node [left] {$\beta$} (LB);
	\draw [->] (LT) -- node [above] {$\beta \circ \tau$} (RT);
	\draw [->] (RT) -- node [right] {$$} (RB);
	\draw [->] (LB) -- node [above] {$1$} (RB);
\end{tikzpicture}
\end{center}
 \begin{center}
\begin{tikzpicture}[thick]
	\node (LT) at (0,2) 	{$m \circ (1 \times m) $ };
	\node (LTa) at (0,0.5) 	{$m \circ (m \times 1)  $ };
	\node (LB) at (0,-1) 	{$m \circ ( [m \circ \tau] \times  1) $};
	\node (LBa) at (.5,-2.5) 	{$[m \circ (m \times 1)] \circ [\tau \times 1]$};
	
	\node (RT) at (8,2) 	{$[m \circ (m \times 1)]  \circ [(1 \times \tau) \circ (\tau \times 1)] $};
	\node (RTa) at (8,0.5) 	{$ [m  \circ  (1 \times m)]  \circ [(1 \times \tau) \circ (\tau \times 1)] $};
	\node (MT) at (4,3.5) {$[m \circ \tau] \circ (1 \times m)$};
	\node (RB) at (8,-1)	{$[m \circ (1 \times (m \circ \tau)] \circ [\tau \times 1]$};
	\node (RBa) at (7.5,-2.5)	{$[m \circ (1 \times m) ] \circ [\tau \times 1]$};
	
	\draw [->] (LTa) -- node [left] {$a$} (LT);
	
	\draw [->] (LB) -- (LBa);
	\draw [->] (RBa) -- node [left] {$m \circ (1 \times \beta)] \circ [ \tau \times 1]$} (RB);
	\draw [->] (RT) -- node [left] {$a \circ [(1 \times \tau) \circ (\tau \times 1)]$} (RTa);   
	\draw [->] (LTa) --  node [right] {$m \circ (\beta \times 1) $} (LB);
	\draw [->] (LT) -- node [above left] {$\beta \circ (1 \times m)$} (MT);
	\draw [->] (MT) -- node [above right] {$$} (RT);
	\draw [->] (RB) -- node [right] {$$} (RTa);
	\draw [->, in = 200, out = 340] (LBa) to node [below] {$a \circ [\tau \times 1]$} (RBa);
\end{tikzpicture}
\end{center}

\end{definition}

\begin{definition}
Let $G$ and $G'$ be abelian 2-groups in $\sC$. A {\em homomorphism} of abelian 2-groups consists of a homomorphism $(F, F_2, F_0): G \to G'$ of underlying groups, such that the following diagram commutes:
\begin{center}
\begin{tikzpicture}[thick]
	\node (LT) at (0,1.5) 	{$m' \circ (F \times F) $ };
	\node (LB) at (0,0) 	{$F \circ m$};
	\node (MT) at (3, 3) {$[m' \circ \tau] \circ (F \times F) $};
	\node (RT) at (6,1.5) 	{$[m' \circ (F \times F)] \circ \tau$};	
	\node (RB) at (6,0)	{$[F \circ m] \circ \tau$};
	\node (MB) at (3, -1.5) {$F \circ [m \circ \tau]$};
	\draw [->] (LT) --  node [left] {$F_2$} (LB);
	\draw [->] (LT) -- node [above left] {$\beta' \circ (F \times F)$} (MT);
	\draw [->] (MT) -- node [above] {$$} (RT);
	\draw [->] (RT) -- node [right] {$F_2 \circ \tau$} (RB);
	\draw [->] (LB) -- node [below left] {$F \circ \beta$} (MB);
	\draw [->] (MB) -- node [below] {$$} (RB);
\end{tikzpicture}
\end{center}
 
 A {\em 2-homomorphism} between homomorphisms of abelian 2-groups in $\sC$ consists of a 2-homomorphism of underlying homomorphisms of groups in $\sC$. 
\end{definition}

\begin{remark}
2-Groups and abelian 2-groups in bicategories are defined diagrammatically. Thus if $h:\sC \to \sC'$ is a product preserving 2-functor and $G$ is an (abelian) 2-group object in $\sC$, then $h(G)$ is canonically an (abelian) 2-group object in $\sC'$. Similarly, $h$ sends $G$-objects  in $\sC$ to $h(G)$-objects in $\sC'$. 
\end{remark}


\subsection{2-Groups as a Localization}

The bicategory of 2-groups (in $\cat$) admits a succinct description as a localization of the bicategory of monoidal groupoids (and hence also as a localization of monoidal categories). This description will play a small technical role in this paper and so we offer a brief summary of this approach and a proof of its equivalence to the one already introduced (Definition \ref{def:2-group}). Along the way we will encounter several equivalent descriptions of 2-groups. 

\begin{definition}
	A monoidal category $M$ {\em admits functorial inverses} if there exists a functor $i: M \to M$ and a natural isomorphism $x \otimes i(x) \cong 1$. A {\em choice of functorial inverses} shall refer to a specific choice of functor $i$ and corresponding natural isomorphism. 
\end{definition}

\begin{lemma}
	If $M$ is a monoidal category which admits functorial inverses, then for any choice of functorial inverses $i$ there is a natural isomorphism $i^2(x) \cong x$.
\end{lemma}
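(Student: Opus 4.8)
The plan is to build the desired isomorphism directly by stringing together the given natural isomorphism with the coherence constraints of the monoidal structure, using the supplied data once at $x$ and once at $i(x)$. Write $\eta_y : y \otimes i(y) \xrightarrow{\sim} 1$ for the natural isomorphism furnished by the choice of functorial inverses. Since $\eta$ is natural in $y$ and $i$ is a functor, I may instantiate it both at $y = x$, giving $\eta_x : x \otimes i(x) \xrightarrow{\sim} 1$, and at $y = i(x)$, giving $\eta_{i(x)} : i(x) \otimes i^2(x) \xrightarrow{\sim} 1$.

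First I would combine these with the unitors and the associator of $M$ to form the composite of isomorphisms
\begin{equation*}
i^2(x) \xrightarrow{\sim} 1 \otimes i^2(x) \xrightarrow{\sim} (x \otimes i(x)) \otimes i^2(x) \xrightarrow{\sim} x \otimes (i(x) \otimes i^2(x)) \xrightarrow{\sim} x \otimes 1 \xrightarrow{\sim} x,
\end{equation*}
where the five arrows are, respectively, the inverse left unitor $\ell_{i^2(x)}^{-1}$, the map $\eta_x^{-1} \otimes \id_{i^2(x)}$, the associator $a_{x, i(x), i^2(x)}$, the map $\id_x \otimes \eta_{i(x)}$, and the right unitor $r_x$. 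Each arrow is an isomorphism, so the composite is an isomorphism $i^2(x) \cong x$.

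Next I would verify naturality in $x$. This is automatic: the unitors and associator are natural transformations, $\eta$ is natural by hypothesis, and whiskering a natural transformation by an identity again yields a natural transformation; since $i$ and $i^2$ are functors, every object appearing in the chain is functorial in $x$, so the whole composite is a natural transformation, hence a natural isomorphism.

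I expect the only point deserving attention — rather than a genuine obstacle — to be that the definition supplies inverses only in the one-sided form $y \otimes i(y) \cong 1$ and says nothing about $i(y) \otimes y$. The construction above is arranged precisely so as to use this form twice (at $x$ and at $i(x)$) and never the reversed product, so no extra input is required. One could in principle also record the coherence diagrams justifying the unlabeled canonical arrows, but these are forced by the triangle and pentagon axioms and need not be checked by hand.
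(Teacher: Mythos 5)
Your proposal is correct and is essentially identical to the paper's proof, which gives the same five-step chain $i^2(x) \cong 1 \otimes i^2(x) \cong (x \otimes i(x)) \otimes i^2(x) \cong x \otimes (i(x) \otimes i^2(x)) \cong x \otimes 1 \cong x$; your version merely spells out the naturality check and the one-sidedness point that the paper leaves implicit.
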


\begin{proof}
	$i^2(x) \cong 1 \otimes i^2(x)   \cong  (x \otimes i(x)) \otimes i^2(x) \cong x \otimes (i(x) \otimes i^2(x)) \cong x \otimes 1 \cong x$. 
\end{proof}

\begin{lemma}
	If $M$ is a monoidal category which admits functorial inverses, then the underlying category of $M$ is a groupoid.
\end{lemma}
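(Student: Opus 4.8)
The plan is to show that every morphism $f\colon x\to y$ in $M$ is invertible. Throughout write $\eta_x\colon x\otimes i(x)\xrightarrow{\sim}1$ for the given natural isomorphism and $j_x\colon x\xrightarrow{\sim}i^2(x)$ for the natural isomorphism produced by the previous lemma.

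First I would promote the one-sided data to two-sided data. Set $\epsilon_x:=\eta_{i(x)}\circ(\id\otimes j_x)\colon i(x)\otimes x\to 1$. A short chase, using naturality of $\eta$ applied to the morphism $i(f)$, naturality of $j$, and the interchange law, shows that $\epsilon$ is again a natural isomorphism, i.e. $\epsilon_y\circ(i(f)\otimes f)=\epsilon_x$ for every $f$. Thus each object $x$ carries $i(x)$ as a genuine two-sided inverse, naturally in $x$. Note next that naturality of $\eta$ gives $\eta_y\circ(f\otimes i(f))=\eta_x$, so $f\otimes i(f)=\eta_y^{-1}\circ\eta_x$ is an isomorphism, and dually $i(f)\otimes f=\epsilon_y^{-1}\circ\epsilon_x$ is an isomorphism. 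I want to stress that this observation is \emph{not} by itself enough: the analogous manipulations only ever show that $f$ is invertible if and only if $i(f)$ is, which is circular.

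The resolution is that one must not try to produce a strict inverse, since the hypotheses impose no triangle/zig-zag identity linking $\eta$ and $\epsilon$; one only needs a left and a right inverse up to isomorphism. Concretely, I would define $g\colon y\to x$ as the composite
$$y\xrightarrow{\ell_y^{-1}}1\otimes y\xrightarrow{\eta_x^{-1}\otimes\id}(x\otimes i(x))\otimes y\xrightarrow{a}x\otimes(i(x)\otimes y)\xrightarrow{\id\otimes(i(f)\otimes\id)}x\otimes(i(y)\otimes y)\xrightarrow{\id\otimes\epsilon_y}x\otimes 1\xrightarrow{r_x}x,$$
whose only non-invertible ingredient is $i(f)$. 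The heart of the argument is then to compute $g\circ f$ and $f\circ g$. Sliding $f$ rightward through $\ell^{-1}$, through $\eta_x^{-1}\otimes\id$, and through $a$ by naturality and the interchange law, the factors $i(f)$ and $f$ collide in a single tensor slot as $i(f)\otimes f$, which naturality of $\epsilon$ collapses to $\epsilon_x$; what remains is $g\circ f=r_x\circ(\id\otimes\epsilon_x)\circ a\circ(\eta_x^{-1}\otimes\id)\circ\ell_x^{-1}$, a composite of coherence isomorphisms. A symmetric computation, now using naturality of $\eta$ to collapse $f\otimes i(f)$ to $\eta_y^{-1}$, shows that $f\circ g$ is likewise a composite of coherence isomorphisms, hence an isomorphism. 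Finally, since $g\circ f$ and $f\circ g$ are both invertible, $f$ has left inverse $(g\circ f)^{-1}\circ g$ and right inverse $g\circ(f\circ g)^{-1}$; a morphism possessing both a left and a right inverse is invertible, and the two inverses coincide to give $f^{-1}$. As $f$ was arbitrary, $M$ is a groupoid.

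The main obstacle, and the key conceptual point, is exactly the temptation to aim for $g\circ f=\id_x$: the data provide an inverse object functorially but no coherence relating the two unit isomorphisms, so the naive rigidity computation need not yield the identity. Recognizing that it suffices for $g\circ f$ and $f\circ g$ to be \emph{mere} isomorphisms — which naturality of $\eta$ and $\epsilon$ supplies for free — is what makes the argument close. The only remaining work is the routine but slightly lengthy bookkeeping of the associator and unitors in the two composites.
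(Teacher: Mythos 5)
Your proof is correct and takes essentially the same route as the paper: the paper's one-line proof exhibits the same candidate inverse, namely the composite $y \cong i^2(y) \cong (x\otimes i(x))\otimes i^2(y) \xrightarrow{(1\otimes i(f))\otimes 1} (x\otimes i(y))\otimes i^2(y) \cong x\otimes(i(y)\otimes i^2(y)) \cong x$, which differs from your $g$ only in where the isomorphism $j$ is inserted. Your extra care — checking only that $g\circ f$ and $f\circ g$ are isomorphisms (rather than identities, which the absence of any zig-zag identity indeed does not guarantee) and then invoking the left-inverse/right-inverse argument — correctly supplies the naturality verification that the paper leaves implicit.
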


\begin{proof}
	Let $f: x \to y$ be a morphism in $M$. Its inverse $f^{-1}: y \to x$ is given by the following composition of natural morphisms,
	\begin{equation*}
		f^{-1}: y \cong i^2(y) \cong   ( x \otimes i(x)) \otimes i^2(y) \stackrel{ (1 \otimes i(f)) \otimes 1}{\longrightarrow}   ( x \otimes i(y)) \otimes i^2(y) \cong  x \otimes (i(y) \otimes i^2(y)) \cong x.
	\end{equation*}
\end{proof}

\begin{lemma} \label{Lma:functinversegivebijectonhoms}
	If $M$ is a monoidal category which admits functorial inverses and $f: x \to x'$ is a morphism in $M$, then for all objects $y,z \in M$ the following maps are bijections,
	\begin{align*}
		 (-) \otimes f: C(y,z) &\to C(  y \otimes x,  z \otimes x'), \\
		f \otimes (-): C(y,z) &\to C( x \otimes  y ,  x' \otimes z). 
	\end{align*}
\end{lemma}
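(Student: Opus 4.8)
The plan is to reduce the statement to two facts: that $f$ is invertible, and that tensoring with a fixed object is an equivalence of categories, hence fully faithful. First I would invoke the preceding lemma that the underlying category of $M$ is a groupoid; in particular the morphism $f\colon x \to x'$ is an isomorphism, with inverse $f^{-1}\colon x' \to x$.

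The key step is to show that the endofunctor $(-)\otimes x\colon M \to M$ is an equivalence of categories, with quasi-inverse $(-)\otimes i(x)$. For one composite, the associator together with the natural isomorphism $x \otimes i(x) \cong 1$ furnished by the choice of functorial inverses and the right unit constraint assemble into a natural isomorphism $\big((-)\otimes x\big)\otimes i(x) \cong (-)$. For the other composite one needs instead $i(x)\otimes x \cong 1$; this follows by evaluating the natural isomorphism $x \otimes i(x) \cong 1$ at the object $i(x)$, giving $i(x)\otimes i^2(x)\cong 1$, and then substituting the isomorphism $i^2(x)\cong x$ from the earlier lemma. Since $(-)\otimes x$ is thereby an equivalence, it is fully faithful, so $g \mapsto g \otimes \id_x$ is a bijection $C(y,z) \to C(y\otimes x, z\otimes x)$. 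The same argument applied on the other side shows $g \mapsto \id_x \otimes g$ is a bijection $C(y,z) \to C(x \otimes y, x \otimes z)$.

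Finally I would factor the maps in the statement through these bijections using bifunctoriality of $\otimes$. The interchange law gives $g \otimes f = (\id_z \otimes f)\circ (g \otimes \id_x)$ for every $g\colon y \to z$, so $(-)\otimes f$ is the composite of the bijection $(-)\otimes \id_x\colon C(y,z) \to C(y\otimes x, z\otimes x)$ with post-composition by $\id_z \otimes f\colon z\otimes x \to z\otimes x'$. The latter is a bijection because $\id_z \otimes f$ is an isomorphism (the functor $z\otimes(-)$ preserves isomorphisms and $f$ is invertible) and post-composition by an isomorphism is always a bijection on hom-sets. Hence $(-)\otimes f$ is a bijection; the case of $f \otimes (-)$ is identical after using $f \otimes g = (f \otimes \id_z)\circ(\id_x \otimes g)$ and the bijectivity of $\id_x \otimes (-)$.

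The only real work is the coherence bookkeeping in the second step, namely verifying that the associativity and unit isomorphisms do assemble into the two natural isomorphisms exhibiting $(-)\otimes x$ as an equivalence; this is routine monoidal-category manipulation and draws on nothing beyond the two preceding lemmas and the definition of functorial inverses. A more hands-on alternative that sidesteps naming the equivalence is to write the inverse of $(-)\otimes \id_x$ explicitly, sending $h\colon y\otimes x \to z\otimes x$ to the composite
\[
 y \cong (y\otimes x)\otimes i(x) \xrightarrow{\;h\otimes \id_{i(x)}\;} (z\otimes x)\otimes i(x) \cong z ,
\]
and to check by the same coherence identities that the two round-trips are identities.
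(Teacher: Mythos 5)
Your proof is correct, and it organizes the argument differently from the paper. The paper's proof inverts $(-)\otimes f$ in a single stroke: it applies the functor $(-)\otimes i(f)$ to a morphism $h: y\otimes x \to z\otimes x'$ and conjugates by the canonical isomorphisms $y \cong (y\otimes x)\otimes i(x)$ and $z \cong (z\otimes x')\otimes i(x')$, letting $i(f)$ absorb the morphism $f$ directly (your ``hands-on alternative'' at the end is exactly this construction specialized to $f = \id_x$). You instead decompose: using the interchange law $g\otimes f = (\id_z\otimes f)\circ(g\otimes\id_x)$, you reduce the general case to the identity-morphism case, handling $(-)\otimes\id_x$ by showing $(-)\otimes x$ is an equivalence with quasi-inverse $(-)\otimes i(x)$, and handling the $f$-factor by the preceding lemma that $M$ is a groupoid, so that $\id_z\otimes f$ is an isomorphism and post-composition with it is a bijection. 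Your route requires one extra ingredient the paper's does not make explicit — the isomorphism $i(x)\otimes x \cong 1$, needed for the second round-trip of the equivalence — and you derive it correctly by evaluating the natural isomorphism $w\otimes i(w)\cong 1$ at $w = i(x)$ and substituting $i^2(x)\cong x$; this plays the same role as the paper's remark that the second bijection uses $i^2(x)\cong x$. What each approach buys: the paper's is shorter, defining the inverse map in one formula; yours is more modular and transparent, isolating exactly where the groupoid property enters and proving along the way the stronger and independently useful statement that tensoring with any fixed object is an equivalence of categories. Both rest on the same two prior lemmas and routine coherence bookkeeping, so there is no gap and no circularity.
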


\begin{proof}
	The inverse to the first bijection is obtained by choosing a functorial inverse $i$, applying the functor $(-) \otimes i(f)$, and using the natural isomorphisms $y \cong (y \otimes x) \otimes i(x)$ and $z \cong (z \otimes x') \otimes i(x')$. The inverse to the second is obtained similarly, making use of the isomorphism $i^2(x) \cong x$. 
\end{proof}

The bicategory of monoidal categories admits all small weak colimits. This can be seen, for example, by observing that the bicategory of monoidal categories is equivalent to the bicategory of algebras for a 2-monad on the bicategory of categories \cite[Section 6]{BKP89}. Similarly the bicategory $\gpd^\otimes$ of monoidal groupoids (i.e. of those monoidal category whose underlying categories are groupoids) is equivalent to the bicategory of algebras for the same 2-monad restricted to the bicategory of groupoids.  All the 2-morphisms of $\gpd^\otimes$ are invertible, hence it fits into the formalism of $(\infty,1)$-categories as considered in \cite{Lurie09} and \cite{Lurie07}, and we may therefore bring to bear the sophisticated machinery developed in those sources in our study of 2-groups. In particular $\gpd^\otimes$ is {\em presentable} in the sense of \cite[Def 5.5.0.1]{Lurie09}. We will see the relevance of this shortly. 

Any monoid may be regarded as a monoidal groupoid in which all morphisms are identity morphism and where the monoidal structure is given by multiplication in the monoid. The natural numbers $\N$, viewed as a monoidal category in this way, are free in the sense that the we have a natural equivalence of categories $\hom(\N, C) \simeq C$ for any monoidal category $C$, where $\hom(A,B)$ denotes the category of monoidal functors from $A$ to $B$. 

\begin{definition}
	Define the monoidal category $\F$ as the weak pushout in the following diagram of monoidal categories.
\begin{equation}\label{eqn:diagramdefiningfree2gp}
	\begin{tikzpicture}[baseline = 0.75 cm]
		\node (LT) at (0, 1.5) {$\N$};
		\node (LB) at (0, 0) {$0$};
		\node (RT) at (2, 1.5) {$\N \times \N$};
		\node (RB) at (2, 0) {$\F$};
		\draw [->] (LT) -- node [left] {$$} (LB);
		\draw [->] (LT) -- node [above] {$\Delta$} (RT);
		\draw [->] (RT) -- node [right] {$$} (RB);
		\draw [->] (LB) -- node [below] {$$} (RB);
		\node at (1.5, 0.5) {$\lrcorner$};
		\node at (1, 0.75) {$\Downarrow$};
	\end{tikzpicture}
\end{equation}
	The inclusion $\N \times 0 \to \N \times \N$ induces a map of monoidal categories $s: \N \to \F$.
\end{definition}

\begin{definition}
	A monoidal category $M$ is {\em $s$-local} if the induced functor,
	\begin{equation*}
		s^*: M^\F = \hom(\F, M) \to \hom(\N, M) \simeq M,
	\end{equation*}
	is an equivalence of categories. 
\end{definition}

\begin{theorem} \label{thm:charof2groups}
	For a monoidal category $M$ the following conditions are equivalent
	\begin{enumerate}
		\item $M$ is a 2-group (i.e. $(p_1, \otimes): M \times M \to M \times M$ is an equivalence),
		\item $M$ is $s$-local, 
		\item $M$ admits functorial inverses.
	\end{enumerate}
\end{theorem}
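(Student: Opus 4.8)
The plan is to prove the cycle of implications $(1) \Rightarrow (2) \Rightarrow (3) \Rightarrow (1)$, since the lemmas already established give us powerful machinery for handling functorial inverses. The most natural starting point is $(3) \Rightarrow (1)$, which is essentially a repackaging of Lemma \ref{Lma:functinversegivebijectonhoms}: if $M$ admits functorial inverses, I would show directly that $(p_1, \otimes): M \times M \to M \times M$ is an equivalence. The key observation is that this functor is essentially surjective because for any $(x, z)$ in the image we can solve $x \otimes y \cong z$ by setting $y = i(x) \otimes z$ and using the natural isomorphism $x \otimes (i(x) \otimes z) \cong z$. Fullness and faithfulness on the hom-sets reduce to the bijectivity statement of Lemma \ref{Lma:functinversegivebijectonhoms}, applied to the tensor component.

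\smallskip

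For $(1) \Rightarrow (3)$, I would extract a functorial inverse from the equivalence $(p_1, \otimes)$. A quasi-inverse functor $M \times M \to M \times M$ sends $(x, 1)$ to some $(x, j(x))$ with $x \otimes j(x) \cong 1$ naturally in $x$; setting $i = j$ gives the desired functor together with its natural isomorphism $x \otimes i(x) \cong 1$. Some care is needed to verify that restricting the quasi-inverse along $x \mapsto (x,1)$ yields a genuine functor $M \to M$ in the second coordinate, but this is straightforward from the structure of the product bicategory.

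\smallskip

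The remaining equivalence $(2) \Leftrightarrow (1)$ is where the localization picture enters, and I expect this to be the main obstacle. The point is to identify $s$-locality with the $2$-group condition. I would first unwind the weak pushout defining $\F$ in \eqref{eqn:diagramdefiningfree2gp}: since $\hom(\N, M) \simeq M$ and $\hom(\N \times \N, M) \simeq M \times M$ via evaluation on generators, and $0$ is the unit monoidal category, the universal property of the pushout identifies $\hom(\F, M)$ with the category of data consisting of an object $y \in M$ (image of the second generator) together with a morphism witnessing that the two composites into $M$ agree — concretely, an object $y$ with a chosen isomorphism $x \otimes y \cong 1$ for the generating object $x$. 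Under this identification, the restriction functor $s^*$ sends such data to the underlying object $x$, so $s^*$ being an equivalence says precisely that every object $x$ admits an essentially unique inverse $y$ with $x \otimes y \cong 1$, functorially. Establishing this dictionary rigorously — translating the weak pushout's universal property into the explicit description of $\hom(\F, M)$ — is the technical heart, and I would lean on the presentability of $\gpd^\otimes$ and the standard behavior of weak colimits under the representable $2$-functors $\hom(-, M)$ to carry it out, then observe that ``essentially unique functorial inverse'' is exactly condition $(3)$, closing the loop.
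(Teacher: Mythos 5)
Your proposal is correct, but it decomposes the equivalence differently from the paper. The paper proves the cycle $(1) \Rightarrow (2) \Rightarrow (3) \Rightarrow (1)$: its $(1) \Rightarrow (2)$ is a short comparison of two weak pullback squares --- apply $\hom(-,M)$ to the pushout (\ref{eqn:diagramdefiningfree2gp}) and use the equivalence $(p_1,\otimes)$ to identify the resulting square with the trivial square built from $p_2$ and $i_1$, giving $M^\F \simeq M$ at once --- while $(2) \Rightarrow (3)$ reads off a functorial inverse from a quasi-inverse of $s^*$, and $(3) \Rightarrow (1)$ combines the fact that $\pi_0 M$ is a group with Lemma \ref{Lma:functinversegivebijectonhoms}, exactly as you do. You instead prove $(1) \Leftrightarrow (3)$ directly in both directions --- your extraction of $i$ from a quasi-inverse of $(p_1,\otimes)$ restricted along $x \mapsto (x,1)$ does not appear in the paper but is sound --- and you attach $(2)$ by fully unwinding $M^\F$ as the category of triples $(x,y,\alpha\colon x \otimes y \cong 1)$, an identification the paper also invokes (``by construction'') for its $(2) \Rightarrow (3)$ step. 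Two remarks on your route. First, your direction $(3) \Rightarrow (2)$ needs more than the bare existence posited in condition $(3)$: to show the section $x \mapsto (x, i(x), \alpha_x)$ is quasi-inverse to $s^*$ you must produce, for each object $(x,y,\alpha)$ of $M^\F$, a natural isomorphism $(x,i(x),\alpha_x) \cong (x,y,\alpha)$ \emph{compatible with the trivializations}; this essential uniqueness is exactly supplied by Lemma \ref{Lma:functinversegivebijectonhoms}, since $x \otimes (-)\colon M(i(x),y) \to M(x\otimes i(x), x \otimes y)$ is a bijection and the unique preimage of $\alpha^{-1}\circ \alpha_x$ is the required comparison map. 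The paper's square-comparison proof of $(1) \Rightarrow (2)$ sidesteps this computation entirely, which is what that route buys; yours buys a localization-free proof of $(1) \Leftrightarrow (3)$ and makes the interpretation of $\F$ as an ``inverse classifier'' carry the remaining weight. Second, presentability of $\gpd^\otimes$ plays no role in your dictionary: the representable $\hom(-,M)$ sends weak pushouts to weak pullbacks by the very definition of a weak colimit, so the description of $M^\F$ follows from the universal property together with the identifications $M^\N \simeq M$ and $M^{\N\times\N} \simeq M\times M$ that the paper itself uses.
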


\begin{proof}
	Let $M$ be a monoidal category. Applying $\hom(-, M)$ to the comutative diagram in Equation (\ref{eqn:diagramdefiningfree2gp}) we obtain the first of the following pair of weak pull-back squares of categories,
	\begin{center}
	\begin{tikzpicture}
		\node (LT) at (0, 1.5) {$M \simeq M^\N$};
		\node (LB) at (0, 0) {$0$};
		\node (RT) at (3, 1.5) {$M \times M \simeq M^{\N \times \N}$};
		\node (RB) at (3, 0) {$M^\F$};
		\draw [<-] (LT) -- node [left] {$$} (LB);
		\draw [<-] (LT) -- node [above] {$\otimes$} (RT);
		\draw [<-] (RT) -- node [right] {$$} (RB);
		\draw [<-] (LB) -- node [below] {$$} (RB);
		\node at (2.5, 0.5) {$\lrcorner$};
		\node at (1.5, 0.75) {$\Downarrow$};
		
		\node (LLT) at (6, 1.5) {$M$};
		\node (LLB) at (6, 0) {$0$};
		\node (LRT) at (8, 1.5) {$M \times M$};
		\node (LRB) at (8, 0) {$M$};
		\draw [<-] (LLT) -- node [left] {$$} (LLB);
		\draw [<-] (LLT) -- node [above] {$p_2$} (LRT);
		\draw [<-] (LRT) -- node [right] {$i_1$} (LRB);
		\draw [<-] (LLB) -- node [below] {$$} (LRB);
		\node at (7.5, 0.5) {$\lrcorner$};
		\node at (7, 0.75) {$\Downarrow$};
	\end{tikzpicture}.
	\end{center}
If $M$ is a 2-group, then these pull-back squares are equivalent via the equivalence $(p_1, \otimes): M \times M \to M \times M$. Thus the natural map $s^*:M^\F \to M$ is an equivalence, and so 2-groups are $s$-local. 

By construction, a functor $\F \to M$ is equivalent to a pair of objects $x, \overline{x} \in M$ together with an equivalence $\alpha: x \otimes \overline{x} \cong 1$. The functor $s^*: M^\F \to M$ sends the triple $(x, \overline{x}, \alpha)$ to the object $x$. If $M$ is $s$-local then we have an inverse equivalence $M \to M^\F$, and hence a {\em functorial} choice of inverse $\overline{x}$ for every object $x \in M$. In other words, $M$ admits functorial inverses. 
	
Finally, suppose that $M$ admits functorial inverses. We wish to show that $M$ is a 2-group, i.e. that the natural functor $(p_1, \otimes): M \times M \to M \times M$ is an equivalence of categories. Given a monoiadal category $C$, the collection of isomorphism classes of objects, $\pi_0C$, is a monoid. For a category which admits functorial inverses, $M$, the monoid $\pi_0 M$ is a group, and hence $(p_1, \otimes)$ is a bijection on isomorphism classes of objects. It remains to show that $(p_1, \otimes)$ is fully-faithful, i.e. that for all objects $x,y,x', y' \in M$, the natural map,
\begin{equation*}
	M(x,y) \times M(x', y') \to M(x,y) \times M(x \otimes x', y \otimes y') 
\end{equation*}
	is a bijection. This in turn is equivalent to the statement that for each $f: x \to y$, the map $f \otimes (-): M(x', y') \to M(x \otimes x', y \otimes y')$ is a bijection, which is part of the statement of Lemma \ref{Lma:functinversegivebijectonhoms}.


\end{proof}

\begin{remark}
Examining the last part of the above proof and the proof of Lemma \ref{Lma:functinversegivebijectonhoms}, one observes that if the underlying category of $M$ is a groupoid, then $(p_1, \otimes)$ is an equivalence (and hence $M$ is a 2-group) precisely if $\pi_0M$ is a group. Thus our definition agrees with the notion of ``weak 2-group'' given in \cite[Definition 2]{BL04}. This characterization allows one to deduce that $\F$ itself is a 2-group: $\F$ is a monoidal groupoid, being a colimit of such, and moreover $\pi_0 \F \cong \Z$ is a group (this last follow from the definition of $\F$ and from the fact that $\pi_0$ sends colimits of monoidal groupoids to colimits of monoids). Finally, using the skeletal classification of 2-groups \cite[Section 8.3]{BL04} and the universal property that $M^\F \simeq M$ for all 2-groups $M$, one may deduce the monoidal equivalence $\F \simeq \Z$. Alternatively, one may simply compute the push-out defining $\F$ and deduce this equivalence. We will not make use of this in what follows.   
\end{remark}


\begin{corollary} \label{cor:2grptogrpdismonadic}
	The bicategory of 2-groups is cocomplete, the inclusion of 2-groups into monoidal groupoids admits a weak left adjoint, and this adjunction is 2-monadic.  
\end{corollary}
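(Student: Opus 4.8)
The plan is to exhibit the bicategory of 2-groups as a reflective localization of $\gpd^\otimes$ and then to read off all three assertions from the machinery of presentable $(\infty,1)$-categories. First I would record the consequence of Theorem \ref{thm:charof2groups} that pins down the objects: a monoidal groupoid is a 2-group precisely when it is $s$-local, so the bicategory of 2-groups is exactly the full sub-bicategory $S^{-1}\gpd^\otimes \subset \gpd^\otimes$ of objects local with respect to the single morphism $S = \{ s : \N \to \F \}$. The passage from monoidal categories to monoidal groupoids is harmless: the lemmas preceding Theorem \ref{thm:charof2groups} show that the underlying category of any 2-group is already a groupoid, and, as observed just after that theorem, among monoidal groupoids $s$-locality is equivalent to $\pi_0$ being a group, i.e. to being a 2-group.

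Next I would invoke that $\gpd^\otimes$ is presentable, as already noted: it is the bicategory of algebras for the relevant 2-monad of \cite{BKP89}, with all 2-cells invertible. Because $S$ is a small set of morphisms, the theory of localizations of presentable $(\infty,1)$-categories \cite{Lurie09} applies directly: the full subcategory $S^{-1}\gpd^\otimes$ of $S$-local objects is again presentable, the inclusion $\iota : S^{-1}\gpd^\otimes \hookrightarrow \gpd^\otimes$ admits a left adjoint $L$ (the reflector), and $(L,\iota)$ exhibits $S^{-1}\gpd^\otimes$ as a reflective localization. Since presentable categories are by definition cocomplete, this immediately yields cocompleteness of the bicategory of 2-groups; concretely, a weak colimit of 2-groups is computed by forming the colimit in $\gpd^\otimes$ and then applying $L$. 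The reflector $L$ is the asserted weak left adjoint to the inclusion, the adjective \emph{weak} referring to the bicategorical nature of the unit and counit.

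It then remains to show that the adjunction $L \dashv \iota$ is 2-monadic, i.e. that the canonical comparison $S^{-1}\gpd^\otimes \to (\gpd^\otimes)^{T}$ into the bicategory of algebras for the induced 2-monad $T = \iota L$ is an equivalence. Here I would use that $\iota$ is fully faithful, so that $T$ is idempotent (its multiplication $T \circ T \to T$ is invertible); for an idempotent 2-monad the Eilenberg--Moore bicategory of algebras is canonically identified with the full subcategory of those $X$ whose unit $X \to TX$ is an equivalence, and these are exactly the $S$-local objects. Equivalently, one can verify the hypotheses of the Barr--Beck--Lurie monadicity theorem of \cite{Lurie09}: $\iota$ is conservative because it is fully faithful, and $S^{-1}\gpd^\otimes$ admits and $\iota$ preserves the colimits of $\iota$-split simplicial objects required, since those are detected after reflection.

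I expect the 2-monadicity clause to be the main obstacle. Both the coherent handling of the idempotent 2-monad $T$ and the verification that the comparison into the bicategory of 2-algebras is an equivalence must be carried out consistently within the $(\infty,1)$-categorical model rather than strictly, and it is here that the presentability input and the invertibility of all 2-cells in $\gpd^\otimes$ do the real work. The cocompleteness and weak-left-adjoint clauses, by contrast, are formal consequences of presentability once the identification of 2-groups with $S^{-1}\gpd^\otimes$ is in place.
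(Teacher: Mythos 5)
Your proposal is correct and follows essentially the same route as the paper: identify 2-groups with the $s$-local objects via Theorem \ref{thm:charof2groups}, invoke presentability of $\gpd^\otimes$ and \cite[Proposition 5.5.4.15]{Lurie09} for cocompleteness and the reflector, and then deduce 2-monadicity from full faithfulness of the inclusion. Your idempotent-monad argument is precisely the paper's explicit verification that every $T$-algebra $Y$ has structure map $h\colon TY \to Y$ an equivalence with inverse the unit $\eta_Y$, so that $T$-algebras coincide with the local objects.
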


\begin{proof}
	By the above theorem, 2-groups are precisely the $s$-local objects of $\gpd^\otimes$. Since this later is a presentable $(\infty,1)$-category, the first two claims are direct statements from \cite[Proposition 5.5.4.15]{Lurie09}, from which it also follows that 2-groups form a {\em strongly reflective sub-bicategory} of $\gpd^\otimes$ \cite[pg 482]{Lurie09}. 
	
The final statement follows from general principles as the localizing adjunction from any presentable $(\infty,1)$-category to a strongly reflective sub-$(\infty,1)$-category is monadic. This is classical for ordinary categories and an identical argument applies to the higher categorical setting, as follows. Let $i: 2\grp \to \gpd^\otimes$ be the inclusion functor, $L$ its left adjoint and $T = iL$ the corresponding 2-monad. Then for every 2-group $X$, the adjunction induces the structure of a $T$-algebra on $iX$. Moreover, since $i$ is fully-faithful, any $T$-algebra structure on $iX$ is equivalent to this canonical one. Thus it is sufficient to show that if $Y$ is an arbitrary $T$-algebra, then $Y$ is in fact a 2-group. This follows since for any $T$-algebra $Y$ the structure morphism $h:TY \to Y$ is an equivalence, with inverse $\eta_Y: Y \to TY$.  
\end{proof}

\begin{corollary} \label{cor:2grptogrpdismonadic2}
	The forgetful functor from 2-groups to groupoids admits a weak left adjoint and the resulting adjunction is 2-monadic. 
\end{corollary}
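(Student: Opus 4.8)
The plan is to exhibit the forgetful functor $V\colon 2\grp \to \gpd$ as the composite
$2\grp \xrightarrow{\,i\,} \gpd^\otimes \xrightarrow{\,U\,} \gpd$, where $i$ is the inclusion of Corollary \ref{cor:2grptogrpdismonadic} and $U$ forgets the monoidal structure, and then to deduce the 2-monadicity of $V$ from that of the two factors. That $U$ is 2-monadic, with weak left adjoint the free monoidal groupoid functor $F$, is precisely the equivalence $\gpd^\otimes \simeq T\text{-alg}$ of \cite{BKP89} quoted above; that $i$ is 2-monadic, with weak left adjoint $L$, is Corollary \ref{cor:2grptogrpdismonadic}. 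The weak left adjoint of $V$ is then $L \circ F$, since for $A \in \gpd$ and $B \in 2\grp$ there are natural equivalences $2\grp(LFA, B) \simeq \gpd^\otimes(FA, iB) \simeq \gpd(A, UiB) = \gpd(A, VB)$, so that $LF \dashv V$.

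It remains to verify 2-monadicity of $V$, and here I would check the hypotheses of the (pseudo/bicategorical) Beck monadicity theorem directly for the composite, rather than invoke a black-box composite-monadicity result. Conservativity is immediate: a 2-monadic functor reflects equivalences, and a composite of functors reflecting equivalences reflects equivalences. Since $2\grp$ is cocomplete (Corollary \ref{cor:2grptogrpdismonadic}) it possesses the colimits required, so the only substantive point is that $V$ \emph{creates} the coequalizers of $V$-split (that is, $V$-absolute) parallel pairs — equivalently, the geometric realizations of $V$-split simplicial objects in the presentable $(\infty,1)$-formulation used for the previous corollary.

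The main obstacle is exactly this colimit condition, the delicate issue being that $i$, as a reflective inclusion, does \emph{not} preserve arbitrary colimits; one must show the relevant coequalizers can be computed inside the subcategory. I would argue as follows. Given a $V$-split pair in $2\grp$, its image under $i$ is a $U$-split pair in $\gpd^\otimes$ (apply $U$ to the $i$-image to recover the split diagram), so by 2-monadicity of $U$ it has a coequalizer $Q$ in $\gpd^\otimes$ which $U$ creates. It then suffices to see that $Q$ again lies in $2\grp$. For this I invoke the characterization recorded after Theorem \ref{thm:charof2groups}: a monoidal groupoid is a 2-group iff its monoid $\pi_0$ of isomorphism classes is a group. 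The functor $\pi_0\colon \gpd^\otimes \to \mathsf{Mon}$ is a weak left adjoint (its right adjoint regards a monoid as a discrete monoidal groupoid), hence preserves the coequalizer $Q$, so that $\pi_0 Q$ is the coequalizer in $\mathsf{Mon}$ of two homomorphisms between groups. Such a coequalizer is a quotient of a group by a monoid congruence, and a surjective monoid homomorphic image of a group is again a group; thus $\pi_0 Q$ is a group, $Q$ is a 2-group, and the coequalizer already lives in the subcategory, so $V$ creates it. This verifies the Beck conditions and, together with the weak left adjoint produced above, yields the corollary. If one prefers to remain within the presentable $(\infty,1)$-framework, the same closure argument applied to geometric realizations of $V$-split simplicial objects lets one conclude instead by the Barr--Beck--Lurie theorem.
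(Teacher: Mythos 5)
Your proof is correct, and it diverges from the paper's argument at exactly the point where the paper is least careful. The paper uses the same factorization $V = U \circ i$ through $\gpd^\otimes$, citing Corollary \ref{cor:2grptogrpdismonadic} and \cite{BKP89} for the two factors just as you do, but then disposes of the corollary in one line by asserting that the composition of 2-monadic adjunctions remains 2-monadic. Taken as a general principle that assertion is false: already in ordinary category theory the inclusion of torsion-free abelian groups into abelian groups is monadic (being reflective), the forgetful functor from abelian groups to sets is monadic, yet the composite is not monadic. What rescues the composite in the present situation is precisely the closure property you verify, namely that the coequalizer (or geometric realization) in $\gpd^\otimes$ of a $U$-split diagram of 2-groups is again a 2-group, so that $V$ creates the colimits Beck's theorem demands. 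Your argument for this closure --- that $\pi_0$ from monoidal groupoids to monoids is a left adjoint, hence carries the coequalizer to a quotient monoid of a group, which is again a group, whence the $\pi_0$-criterion recorded after Theorem \ref{thm:charof2groups} applies --- is sound, and your handling of the remaining ingredients (the composite weak left adjoint $L \circ F$, conservativity of a composite of conservative functors, existence of the needed colimits via cocompleteness of 2-groups) is correct. In short, where the paper buys brevity by invoking a composite-monadicity principle that requires, at minimum, an additional hypothesis, your proof supplies the missing verification; your closing observation that the same closure argument runs equally through the Barr--Beck--Lurie theorem keeps it consistent with the presentable $(\infty,1)$-machinery the paper itself uses to prove Corollary \ref{cor:2grptogrpdismonadic}.
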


\begin{proof}
	The composition of 2-monadic adjunctions remains 2-monadic, and so the statement follows from the previous corollary and from the fact that the forgetful functor from monoidal groupoids to groupoids is part of a 2-monadic adjunction \cite[Section 6]{BKP89}.
\end{proof}

The results of this section can be applied to 2-groups in a bicategory $\sC$ much more general then $\sC = \cat$. Let $\sS$ be an essentially small category\footnote{The following construction works when $\sS$ is an essentially small bicategory, but we will only need the case where $\sS$ is an ordinary category.}. Let $\sC$ be a localization (i.e. reflexive sub-bicategory) of the functor category $\fun(\sS^{op}, \gpd)$. For each object $U \in \sS$, let $L_U: \gpd \to \sC$ be the left-adjoint to evaluation at $U$. Let $\sC^\otimes$ denote to bicategory of monoidal objects in $\sC$.\footnote{These are defined identically to 2-groups in $\sC$, except for the requirement that $(p_1, m): M \times M \to M \times M$ is an equivalence.} The functor $L_U$ induces a functor $L_U: \gpd^\otimes \to \sC^\otimes$. Let $s_U: L_U(\N) \to L_U(\Z)$ denote the canonical map of objects in $\sC^\otimes$. An object of $\sC^\otimes$ is {\em $S$-local} if it is local with respect to all $s_U$.\footnote{Since $\sS$ is essentially small, the $S$-local objects of $\sC$ are a further reflexive sub-bicategory of $\fun(\sS^{op}, \gpd)$. } We have the following theorem:

\begin{theorem}
	Let $\sC$ be a localization of $\fun(\sS^{op}, \gpd)$, as above, and let $M \in \sC^\otimes$. Then the following statements are equivalent:
	\begin{enumerate}
		\item $M$ is a 2-group in $\sC$.
		\item $M(U)$ is a 2-group (in $\gpd$) for every object $U \in \sS$. 
		\item $M(U)$ admits functorial inverses for every object in $U\in \sS$.
		\item $M(U)$ is $s$-local for every object in $U\in \sS$.
		\item $M$ is an $S$-local object of $\sC^\otimes$. 
	\end{enumerate}
	Moreover the adjunction $F:\sC \leftrightarrows 2\grp(\sC):U$ (induced by the forgetful functor from $2\grp(\sC)$ to $\sC$) is monadic.
\end{theorem}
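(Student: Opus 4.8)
The plan is to reduce all five conditions to objectwise statements about the groupoids $M(U)$ and then apply Theorem \ref{thm:charof2groups} pointwise. The structural input I would isolate first is that, since $\sC$ is a reflexive (localizing) sub-bicategory of $\fun(\sS^{op},\gpd)$, the inclusion is a right $2$-adjoint and hence preserves $2$-limits; therefore finite products in $\sC$, and likewise in $\sC^\otimes$, are computed objectwise, and --- the inclusion being full --- a $1$-morphism of $\sC$ is an equivalence exactly when it is one in the ambient functor bicategory, i.e. exactly when each component is an equivalence of groupoids. Each evaluation $\mathrm{ev}_U$ is product-preserving, so $M(U)$ is canonically a monoidal groupoid and the multiplication $m$ of $M$ evaluates to that of $M(U)$.

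With this in hand I would prove (1) $\Leftrightarrow$ (2), the chain (2) $\Leftrightarrow$ (3) $\Leftrightarrow$ (4), and (4) $\Leftrightarrow$ (5). For (1) $\Leftrightarrow$ (2): the $1$-morphism $(p_1,m)\colon M\times M \to M\times M$ evaluates at $U$ to $(p_1,m_U)$, so by objectwise detection of products and equivalences it is an equivalence in $\sC$ iff every $(p_1,m_U)$ is an equivalence, which is precisely the statement that each $M(U)$ is a $2$-group. The equivalences (2) $\Leftrightarrow$ (3) $\Leftrightarrow$ (4) are then nothing but Theorem \ref{thm:charof2groups} applied to each monoidal groupoid $M(U)$. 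For (4) $\Leftrightarrow$ (5) I would use that $s_U = L_U(s)$, where $s\colon \N \to \F \simeq \Z$ is the map from the earlier construction, together with the adjunction $L_U \dashv \mathrm{ev}_U$ on monoidal objects: applying $\hom_{\sC^\otimes}(-,M)$ to $s_U$ and transposing identifies this map with $s^{*}\colon \hom_{\gpd^\otimes}(\F,M(U)) \to \hom_{\gpd^\otimes}(\N,M(U)) \simeq M(U)$, the last equivalence being the freeness of $\N$. Hence $M$ is local with respect to $s_U$ iff $M(U)$ is $s$-local, and so $S$-locality of $M$ is equivalent to $s$-locality of every $M(U)$.

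For the \emph{moreover} I would mirror Corollaries \ref{cor:2grptogrpdismonadic} and \ref{cor:2grptogrpdismonadic2}. By the equivalences just established, $2\grp(\sC)$ is exactly the full sub-bicategory of $S$-local objects of $\sC^\otimes$. Since $\sC$ is presentable and the free-monoidal $2$-monad is accessible, $\sC^\otimes$ is presentable and its forgetful functor $\sC^\otimes \to \sC$ is $2$-monadic (the relative analogue of the monoidal-groupoids-to-groupoids statement from \cite{BKP89}). Because $\sS$ is essentially small the maps $s_U$ form a small set, so the $S$-local objects are a strongly reflective sub-$(\infty,1)$-category of the presentable $\sC^\otimes$; by \cite[Prop. 5.5.4.15]{Lurie09} the inclusion $2\grp(\sC) \hookrightarrow \sC^\otimes$ has a left adjoint and is $2$-monadic, as in Corollary \ref{cor:2grptogrpdismonadic}. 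The forgetful functor $U\colon 2\grp(\sC)\to \sC$ is the composite of these two $2$-monadic forgetful functors, and its left adjoint $F$ is the composite of the free-monoidal functor with the $S$-localization reflector; since a composite of $2$-monadic adjunctions is again $2$-monadic, $U$ is $2$-monadic.

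The step I expect to demand the most care is the transposition used for (4) $\Leftrightarrow$ (5): one must confirm that $\mathrm{ev}_U\colon \sC^\otimes \to \gpd^\otimes$ genuinely has $L_U$ as a left adjoint on monoidal objects (note $L_U$ need not preserve products, so this is the abstract left adjoint guaranteed by presentability rather than $L_U$ applied underlying-wise) and that the canonical $s_U$ agrees with $L_U(s)$, so that the adjunction square really does identify $\hom_{\sC^\otimes}(s_U,M)$ with $s^{*}$ on $M(U)$. A secondary point, in the \emph{moreover}, is checking the accessibility and presentability hypotheses that license the appeals to \cite[Prop. 5.5.4.15]{Lurie09} and to the composability of $2$-monadic adjunctions exactly as in Corollary \ref{cor:2grptogrpdismonadic2}.
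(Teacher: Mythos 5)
Your proposal is correct and follows essentially the same route as the paper: the equivalences (1)\,$\Leftrightarrow$\,(2) and (4)\,$\Leftrightarrow$\,(5) via objectwise detection of products and equivalences in the full sub-bicategory $\sC \subset \fun(\sS^{op},\gpd)$ together with the (monoidal) Yoneda/adjunction transposition for $L_U \dashv \mathrm{ev}_U$, the chain (2)\,$\Leftrightarrow$\,(3)\,$\Leftrightarrow$\,(4) by applying Theorem \ref{thm:charof2groups} pointwise, and the monadicity statement by carrying over the proofs of Corollaries \ref{cor:2grptogrpdismonadic} and \ref{cor:2grptogrpdismonadic2}. Your write-up is simply a more detailed version of the paper's terse argument, and your flagged caveat --- that $L_U$ need not preserve products, so the adjunction on monoidal objects and the identification of $s_U$ with $L_U(s)$ must be taken in the lifted, abstract sense --- is a legitimate point the paper compresses into its appeal to the bicategorical Yoneda lemma.
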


\begin{proof}
	The equivalences (1) $\Leftrightarrow$ (2) and (4) $\Leftrightarrow$ (5) follow from the bicategorical Yonneda lemma, and the fact that $\sC$ is a full subbicategory of $\fun(\sS^{op}, \gpd)$. The equivalences (2) $\Leftrightarrow$ (3) $\Leftrightarrow$ (4) follow from Theorem \ref{thm:charof2groups}. The proof of Corollaries  \ref{cor:2grptogrpdismonadic} and \ref{cor:2grptogrpdismonadic2} carry over immediately to show the final statement. 
\end{proof}

\begin{example} \label{Example:2GrpInStacksIsMonadic}
	The category $\sC = \stack$ of all stacks (not necessarily presentable) on the site $\sS = \man$ of smooth manifolds with the surjective submersion topology is a localization of $\fun(\man^{op}, \gpd)$. Hence the adjunction $F:\stack \leftrightarrows 2\grp(\stack):U$ between stacks and 2-groups in stacks is monadic. 
\end{example}



\subsection{Smooth 2-Groups and Gerbes}

We now specialize to the case $\sC = \bibun$, the bicategory of Lie groupoids, bibundles, and bibundle morphisms. We will also refer to the objects of $\bibun$ as {\em smooth} stacks.

\begin{definition}
A {\em smooth 2-group} (resp. {\em smooth abelian 2-group}) is a 2-group object (resp. abelian 2-group object) in $\bibun$. Let $G$ be a smooth 2-group. Then a smooth $G$-stack $X$ is a $G$-object in $\bibun$. Similarly, if $X$ is a smooth stack, a {\em smooth 2-group over $X$} is a group object of $\bibun/X$. Let $G$ be a smooth 2-group over $X$, then a {\em smooth $G$-stack over $X$} is a $G$-object in $\bibun/X$. 
\end{definition}

\begin{remark}\label{RmkDiscreteInSmooth}
	If $X$ is a discrete space, then any surjective submersion $P \to X$ admits a {\em global} section. Hence a bibundle whose source is a discrete groupoid is equivalent to one arising from a functor (see Proposition \ref{PropSectionIsBundlization}). In particular we have that the composite 2-functor $\gpd \hookrightarrow \Lie \gpd \to \bibun$ is fully-faithful. Thus any smooth 2-group whose underlying Lie groupoid is discrete arises from a discrete 2-group and we may regard the theory of discrete 2-groups as a special case of smooth 2-groups.  
\end{remark}

\begin{example}[Lie Groups] \label{ExampleLieGrp}
Let $G$ be a Lie group, viewed as a Lie groupoid with only identity morphisms. Then $G$ is a smooth 2-group with monoidal structure coming from the multiplication in $G$.
\end{example}

\begin{example}[Abelian Lie Groups] \label{ExampleAbLieGrp}
Let $A$ be an abelian Lie group. Let $[pt/A]$ denote the Lie groupoid with a single object and with automorphism of this object equal to $A$. Since $A$ is a abelian, addition is a group homomorphism $A \times A \to A$. Thus addition equips $[pt/A]$ with a monoidal structure. Moreover, there is a (trivial) braiding making this into a smooth abelian 2-group. 
\end{example}

\begin{example}[Crossed Modules] \label{ExampleCrossedMod2grp}
A crossed module of Lie groups, $\beta:H \to G$, is well known to be equivalent to a group object in the category of Lie groups. Thus a crossed module gives rise to a Lie 2-group (and hence a smooth 2-group) in which the associator and unitor structures are trivial. The translation from a crossed module to a Lie 2-group is as follows, see  \cite{BL04}. The objects consist of the manifold $G$. The morphisms consist of the manifold $G \times H$. The source map is projection onto the $G$-factor. The target map is given by
\begin{equation*}
t(g,h) = g \cdot \beta(h).
\end{equation*}
Composition is given by $(g_0, h_0) \circ (g \beta(h_0), h_1) = (g, h_0 h_1)$. 
Viewing the morphisms as the group $G \rtimes H$, both the source and target maps are group homomorphisms. Group multiplication in $G$ and $G \rtimes H$ equip this Lie groupoid with a strict monoidal structure, with strict inverses. The underlying stack of this smooth 2-group is the quotient stack $[G/H]$.

A particularly important example of such a crossed module is $H \to Aut(H)$, sending an element to the conjugation automorphism. The corresponding smooth 2-group $[Aut(H) / H]$ plays a key role in the theory of non-abelian bundle gerbes described in \cite{ACJ05}, to which we will turn shortly.  

\end{example}

\begin{example}[Smooth Cocycles]\label{ExampleCocycle2Grp}
Let $G$ be a Lie group and $A$ an abelian Lie group equipped with an action of $G$. Let $a \in Z^3(G, A)$ be a smooth normalized group cocycle. Following \cite{BL04} we may form the following smooth 2-group $\Gamma = (G, A, a)$. The objects of $\Gamma$ consist of the manifold $G$, the morphisms are the space $G \times A$. There are no morphism from an element $g$ to $g'$, unless $g = g'$. In this case the morphisms are identified with $A$, which is the fiber of the projection map $G \times A \to A$. The monoidal structure is given by group multiplication at the level of objects and by the multiplication in the group $G \rtimes A$ at the level of morphisms. This is a strictly associative multiplication. Nevertheless, we equip $\Gamma$ with the nontrivial associator determined by $a$. This determines the unitor and inversion structures up to natural isomorphism. 
\end{example}

Given a object $X \in \bibun$ we can consider the product 2-functor:
\begin{equation*}
	\times X: \bibun  \to \bibun / X.
\end{equation*}
This is product preserving, and sends any smooth 2-group to a smooth 2-group over $X$. If $G$ is a smooth 2-group, then by a smooth $G$-stack over $X$ we will mean a smooth $G \times X$-stack over $X$. 
 If $U \to X$ is any surjective submersion, then the pullback functor
\begin{equation*}
	\bibun/X \to \bibun/U
\end{equation*}
 is product preserving as well, hence sends smooth $G$-stacks over $X$ to smooth $G$-stacks over $U$. 

\begin{example}
Let $A$, $B$, and $C$ be smooth 2-group, let $f: A \to B$ and $g: B \to C$ be homomorphisms. Then $f$ gives $B$ the structure of a smooth $A$-stack. Moreover, if  $\phi: gf \to 0$ is a 2-homomorphism, then the action of $A$ on $B$ induced by $f$ may be canonically augmented via $\phi$ to an action over $C$. Hence in this case $B$ is an $A$-object over $C$. 
\end{example}

\begin{definition}[Principal Bundles]
Let $G$ be a smooth 2-group. A smooth $G$-stack $Y$ over $X$ is a {\em $G$-principal bundle} if it is locally trivial as a $G$-stack, i.e. there exists a covering bibundle $f:U \to X$ such that $f^*Y$ is $G$-equivariantly equivalent to $U \times G$ as a smooth $G$-stack over $U$. 
\end{definition}

\begin{example}[Ordinary Principal Bundles]
Let $G$ be a Lie group, thought of as a smooth 2-group with only identity morphisms. Let $X$ be a manifold thought of as a Lie groupoid with only identity morphisms. Then a $G$-principal bundle over $X$ is the same as a $G$-principal bundle over $X$ in the usual sense.
\end{example}

\begin{example}[Abelian Cocycle Data] \label{ExampleCocycleGerbe}
	Let $A$ be an abelian Lie group, $X$ a manifold. Let $Y \to X$ be a surjective submersion, and fix a \v{C}ech 2-cocycle $\lambda: Y^{[3]} = Y \times_X Y \times_X Y \to A$. We construct a $[pt/A]$-principal bundle $E^\lambda$ over $X$ as follows. The objects of $E^\lambda$ consist of the manifold $Y$. The morphisms of $E^\lambda$ consist of the manifold $Y^{[2]} \times A$. Composition is given by the formula,
\begin{align*}
	E^\lambda_1 \times_Y E^\lambda_1 &\to E^\lambda_1 \\
	(y_0, y_1, a) \times (y_1, y_2, b) & \mapsto (y_0, y_2, a + b + \lambda(y_0, y_1, y_2)).
\end{align*}
This is readily seen to be a Lie groupoid equipped with a map to $X$ and an $[pt/A]$-action over $X$. The pullback of $E^\lambda$ along $Y \to X$ is a trivial $[pt/A]$-space over $Y$, so that $E^\lambda$ is indeed a principal bundle.
\end{example}

\begin{example}[Abelian Bundle Gerbes]
		More generally, an {\em $A$-bundle gerbe over $X$} in the sense of Murray \cite{Murray96} is an $[pt/A]$-principal bundle over $X$. Let $A$ be an abelian Lie group, $X$ a manifold. An $A$-bundle gerbe over $X$ consists of $Y \to X$ a surjective submersion, $L \to Y^{[2]}$ an $A$-principal bundle, together with an isomorphism of $A$-principal bundles over $Y^{[3]}$,
	\begin{equation*}
		\lambda: d_2^*L \otimes_A d_0^* L \to d_1^*L
\end{equation*}
such that the induced map $d\lambda: Y^{[4]} \to A$ is trivial. From this we construct an $[pt/A]$-principal bundle $E^\lambda$ over $X$ 
as follows.  The objects of $E^\lambda$ are the manifold $Y$. The morphisms are the elements of the manifold $L$, with source and target maps induced from the maps,
\begin{equation*}
	L \to Y^{[2]} \rightrightarrows Y.
\end{equation*}
Composition is induced from the map $\lambda$, and is associative because $d \lambda = 0$. There exists a covering $c:U \to Y$ such that the induced $A$-bundle $c^*L \to U^{[2]}$ is trivial, and pulling $E^\lambda$ back to $U$ consequently yields a trivial $[pt/A]$-stack over $U$. 
	\end{example}

The last example can be given a more conceptual description. Just as maps from a space to a topological group again form a group, maps from a manifold to a smooth 2-group form a 2-group. A map from a manifold $Z$ to the 2-group $[pt/A]$ consists precisely of an $A$-bundle $L$ over $Z$. The (2-)group structure is given by tensoring $A$-bundles. Thus the above abelian bundle gerbe data consists of a map of stacks $L:Y^{[2]} \to [pt/A]$, together with an isomorphism of bibundles $\lambda: dL \to 0$ from $Y^{[3]}$ to $[pt/A]$, such that $d\lambda$ is the canonical isomorphism $d^2 L \cong 0$ of bibundles from $Y^{[4]}$ to $[pt/A]$. 

\begin{example}[Classical Nonabelian Bundle Gerbes]
The classical data of a non-abelian bundle gerbe as described in \cite{ACJ05} consists of a non-abelian Lie group $H$, a manifold $X$, a surjective submersion $Y$, a bibundle $\cE$ from $Y^{[2]}$ to the smooth crossed module 2-group $[Aut(H)/H]$ from Example \ref{ExampleCrossedMod2grp}, and an isomorphism of bibundles $\lambda: d \cE \to 1$ from $Y^{[3]}$ to  $[Aut(H)/H]$, such that $d\lambda$ is the canonical isomorphism $d^2 \cE \cong 0$ of bibundles from $Y^{[4]}$ to $[Aut(H)/A]$.

This can be made into an $[Aut(H)/H]$-principal bundle over $X$ in a manner analogous to the above constructions. The objects of this Lie groupoid are the manifold $Y \times Aut(H)$, and the morphisms are the manifold $\cE \times Aut(H)$. Composition is defined using the above structures and there is an induced action of $[Aut(H)/H]$. By choosing a covering of $Y$ appropriately (so that the pullback of $\cE$ can be trivialized) we see that this yields a principal bundle over $X$. 
\end{example}

One of the advantages of using the bicategory $\bibun$ to define the notion of gerbe is that it automatically produces the correct notion of {\em equivalence} of gerbe over $X$. To see this, consider a covering $U \to X$ and the corresponding \v{C}ech groupoid $X_U = (U^{[2]} \rightrightarrows U)$. There is a canonical functor $X_U \to X$ given by projection. This functor is almost never an equivalence in $\Lie\gpd$, see Example \ref{PairGroupoidEquiv}. However, the bundlization is {\em always} an invertible bibundle. It is an equivalence is in $\bibun$. For this reason the stable equivalences which need to be formally inverted in the approaches given in \cite{Murray96, ACJ05}, correspond to honest equivalences of $G$-stacks over $X$. This approach is similar to the ones presented in \cite{Bartels04} and \cite{Bakovic07}.

In the last section of this paper we will provide a model of the String group as a smooth 2-group which is not of the form of the Examples  \ref{ExampleLieGrp},  \ref{ExampleAbLieGrp}, \ref{ExampleCrossedMod2grp}, and \ref{ExampleCocycle2Grp}. Nevertheless, the above material allows us to discuss principal String$(n)$-bundles over a given manifold $X$. The notion of string structure introduced in \cite{Waldorf09} yields such a principal String$(n)$-bundle for the model of String$(n)$ constructed in the final section of this paper.

\subsection{Extensions of 2-Groups}

\begin{definition}
An {\em extension} of a smooth 2-group $G$ by a smooth 2-group $A$ consists of a smooth 2-group $E$, a homomorphisms $f: A \to E$ , a homomorphism $g: E \to G$, and a 2-homomorphism $\phi: gf \to 0$, such that $E$ is an $A$-principal bundle over $G$. 
\end{definition}

\begin{lemma}
	For any extension of smooth 2-groups as above, the following diagram is a (homotopy) pull-back in the bicategory of smooth 2-groups.
	\begin{equation}\label{EqnExtensionAsPullBackPushOut}
	\begin{tikzpicture}[baseline = 0.75 cm]
		\node (LT) at (0, 1.5) {$A$};
		\node (LB) at (0, 0) 	 {$0$};
		\node (RT) at (2, 1.5) {$E$};
		\node (RB) at (2, 0)    {$G$};
		\draw [->] (LT) -- node [left] {$$} (LB);
		\draw [->] (LT) -- node [above] {$f$} (RT);
		\draw [->] (RT) -- node [right] {$g$} (RB);
		\draw [->] (LB) -- node [below] {$$} (RB);
		\node at (1, 0.75) {$\Downarrow \phi$};
		\node at (0.3, 1.2) {$\ulcorner$};
	\end{tikzpicture}
	\end{equation}
\end{lemma}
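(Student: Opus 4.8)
The plan is to show that the cone consisting of $A$, the morphism $f\colon A \to E$, the unique morphism $!_A\colon A \to 0$ to the zero object, and the $2$-homomorphism $\phi\colon gf \to 0$, exhibits $A$ as the bilimit of the diagram $0 \xrightarrow{e} G \xleftarrow{g} E$ in $2\grp(\bibun)$. The first move is to strip away the $2$-group structure. Since $\bibun$ is equivalent to the bicategory of smooth stacks, Example \ref{Example:2GrpInStacksIsMonadic} (and its evident analogue for $\bibun \simeq \stack_{pre}$) shows that the forgetful $2$-functor $U\colon 2\grp(\bibun) \to \bibun$ is monadic, and monadic functors reflect and create bilimits. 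Hence the square \eqref{EqnExtensionAsPullBackPushOut} is a homotopy pullback of $2$-groups if and only if its underlying square of smooth stacks is a homotopy pullback in $\bibun$, provided the latter exists. Thus it suffices to prove that $f$ realizes $A$ as the homotopy fiber $pt \times_G E$ of $g$ over the unit.

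Next I would establish the existence of this pullback in $\bibun$ and produce the comparison map. Because $E$ is an $A$-principal bundle over $G$, the projection $g$ is, over a trivializing covering bibundle $U \to G$, modeled by the product projection $U \times A \to U$; in particular $g$ is a submersion of stacks and, by the remark following the definition of covering bibundles, is transverse to the point inclusion $e\colon pt \to G$. Proposition \ref{PropPullBackOfBibunExist} then produces the pullback
\begin{equation*}
	W = pt \times_G E
\end{equation*}
in $\bibun$. The $2$-homomorphism $\phi\colon gf \to 0$ is precisely the data of a cone over the diagram with apex $A$, and so induces the canonical comparison morphism $c\colon A \to W$. By the monadicity reduction of the first step, it remains only to show that $c$ is an equivalence in $\bibun$.

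Finally I would verify that $c$ is an equivalence by descent along the trivializing cover. Pulling $g$ back along $U \to G$ gives the projection $U \times A \to U$, whose homotopy fiber over any point lying above the unit is manifestly $A$; the local trivialization $U \times_G E \simeq U \times A$ identifies these fibers $A$-equivariantly, while the morphism $f$ together with the trivialization $\phi$ pins down the basepoint, so that $c$ is a local equivalence. Since equivalences in $\bibun$ are detected after pullback along a covering bibundle, $c$ is an equivalence. The main obstacle is exactly this last step: one must propagate the local identification through the descent datum while retaining the full $A$-equivariant structure together with the coherence isomorphism $\phi$, so that the principal bundle hypothesis is used in its strong form, asserting that $f$ exhibits $A$ as the trivialized fiber $A$-torsor rather than merely matching underlying stacks. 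Unwinding this is tantamount to recognizing that, after the monadic reduction, the statement ``$E$ is an $A$-principal bundle over $G$'' is precisely the assertion that $f$ identifies $A$ with the homotopy fiber of $g$.
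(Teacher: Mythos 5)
Your argument is correct and takes essentially the same approach as the paper: both reduce the statement along the forgetful functor from smooth 2-groups to smooth stacks (the paper by citing that it reflects equivalences and is a weak right adjoint, you by invoking monadicity and creation of bilimits, which amounts to the same reduction) and then invoke the $A$-principal bundle hypothesis to identify $A$ with the pullback of underlying stacks. The only difference is one of detail: the paper asserts in a single line that $A$ is the stack-level pullback, whereas you verify this by producing the pullback via transversality (Proposition \ref{PropPullBackOfBibunExist}) and checking the comparison map is an equivalence by $A$-equivariance and descent along a trivializing cover, thereby filling in the step the paper leaves implicit.
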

\begin{proof}
	The forgetful functor from smooth 2-groups to stacks reflects equivalences and is a (weak) right adjoint, hence preserves (homotopy) pull-backs. Since $A$ is the pull-back in stacks, it follows that $A$ is also the pull-back in smooth 2-groups. 
\end{proof}

	There is an obvious generalization to extensions of smooth abelian 2-groups. In that context, the above square is also a push-out square. In the non-abelian setting this fails, a fact which was graciously pointed out to us by an anonymous reviewer. Nevertheless, the above square does satisfy a closely related universal property, which we now formulate. 

\begin{definition}
	A {\em kernel square} of smooth 2-groups consists of a pull-back diagram 
	\begin{equation*}
	\begin{tikzpicture}[baseline = 0.75 cm]
		\node (LT) at (0, 1.5) {$X$};
		\node (LB) at (0, 0) 	 {$0$};
		\node (RT) at (2, 1.5) {$Y$};
		\node (RB) at (2, 0)    {$Z$};
		\draw [->] (LT) -- node [left] {$$} (LB);
		\draw [->] (LT) -- node [above] {$f$} (RT);
		\draw [->] (RT) -- node [right] {$g$} (RB);
		\draw [->] (LB) -- node [below] {$$} (RB);
		\node at (1, 0.75) {$\Downarrow \phi$};
		\node at (0.3, 1.2) {$\ulcorner$};
	\end{tikzpicture}
	\end{equation*}	
	i.e. homomorphisms $f: X \to Y$, $g: Y \to Z$, and a 2-homomorphism $\phi: gf \cong 0$, in which $X$ is the pull-back. 
\end{definition}

The 1-morphisms and 2-morphisms of kernel squares are the obvious ones for diagrams in bicategories. Given an extension of smooth 2-groups $A \stackrel{f}{\to} E \stackrel{g}{\to} G$, we may consider the sub-bicategory $\sK\sS(f)$ of kernel squares,
\begin{equation*}
\begin{tikzpicture}[baseline = 0.75 cm]
	\node (LT) at (0, 1.5) {$A$};
	\node (LB) at (0, 0) 	 {$0$};
	\node (RT) at (2, 1.5) {$E$};
	\node (RB) at (2, 0)    {$Z$};
	\draw [->] (LT) -- node [left] {$$} (LB);
	\draw [->] (LT) -- node [above] {$f$} (RT);
	\draw [->] (RT) -- node [right] {$h$} (RB);
	\draw [->] (LB) -- node [below] {$$} (RB);
	\node at (1, 0.75) {$\Downarrow \phi$};
	\node at (0.3, 1.2) {$\ulcorner$};
\end{tikzpicture}
\end{equation*}
and those 1-morphisms and 2-morphism which restrict to the identity of $f: A \to E$. 

\begin{proposition} \label{Prop:UniversalPropOfExten}
	Given an extension $A \to E \to G$ of smooth 2-groups as above, the kernel square $A \to E \to G$ is the initial object of $\sK\sS(f)$.
\end{proposition}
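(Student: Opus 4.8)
The plan is to recognize the assertion as the universal property of $g \colon E \to G$ as a \emph{cokernel} of $f$, which is exactly what the principal bundle structure supplies. Unwinding the definitions, a morphism in $\sK\sS(f)$ from $(G, g, \phi)$ to an arbitrary kernel square $(Z, h, \psi)$ (with $h \colon E \to Z$ and $\psi \colon hf \to 0$) amounts to a homomorphism $k \colon G \to Z$ together with a $2$-isomorphism $\eta \colon k\circ g \to h$ whose restriction along $f$ carries $k\phi$ to $\psi$. Writing $g^*(k) := (k\circ g,\, k\phi)$ for the pair consisting of $k\circ g$ and the trivialization $k\phi \colon (kg)f \cong k(gf) \to k\circ 0 \cong 0$, such a morphism is precisely an isomorphism $g^*(k) \cong (h,\psi)$. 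Thus the whole proposition reduces to showing that
\[
	g^* \colon \hom_{2\grp}(G, Z) \longrightarrow \{(h,\psi)\}, \qquad k \mapsto (k\circ g,\; k\phi),
\]
is an equivalence onto the category of homomorphisms $E \to Z$ equipped with a trivialization of their restriction along $f$. Granting this, the initial object $(G,g,\phi)$ is $g^*(\id_G)$, the unique morphism to $(Z,h,\psi)$ is the essentially unique $k$ with $g^*(k)\cong(h,\psi)$, and uniqueness up to unique $2$-isomorphism is exactly the fully faithfulness of $g^*$.

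First I would make precise that the extension condition exhibits $g$ as the quotient of $E$ by $A$. By the definition of $A$-principal bundle there is a covering bibundle $U \to G$ over which $E$ trivializes $A$-equivariantly; hence $g$ is a covering bibundle, i.e.\ an effective epimorphism of stacks, and it identifies $G$ with the stack quotient $[E/A]$, where $A$ acts on $E$ through $f$ and $m_E$ by $a\cdot e = m_E(f(a),e)$. The standard descent property of principal bundles then gives, for every stack $Z$, an equivalence between $\hom(G,Z)$ and the category of $A$-invariant maps $E \to Z$ (with $A$ acting trivially on $Z$). The homotopy pull-back lemma proved just above, which pins down $A$ as the genuine kernel of $g$, guarantees that this invariance is honest descent data and not extra structure.

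The heart of the argument is to upgrade this from $\bibun$ to $2\grp(\bibun)$ and to convert $A$-invariance into the trivialization datum $\psi$. For the first point I would appeal to the monadicity of smooth $2$-groups over $\bibun$ (Example \ref{Example:2GrpInStacksIsMonadic} and the preceding theorem): the quotient $[E/A]$ is the coequalizer of the reflexive pair $A \times E \rightrightarrows E$ presenting the action groupoid, a monadic forgetful functor creates such coequalizers, and therefore the $2$-group structure on $G$ is exactly the one induced from $E$ and the stack-level descent lifts to a descent statement for \emph{homomorphisms}. For the second point, given a homomorphism $h$, its $A$-invariance asks for a coherent identification $m_Z(hf(a), h(e)) \cong h(e)$; since left multiplication by $hf(a)$ is an equivalence in the $2$-group $Z$, this datum is canonically the same as a coherent trivialization $\psi \colon hf \to e_Z$, and the pentagon and triangle conditions for an invariance structure translate term by term into the $2$-homomorphism axioms for $\psi$. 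Together these show $g^*$ is an equivalence onto $\{(h,\psi)\}$, which is the whole claim.

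The hard part will be precisely this last translation: verifying, at the level of every coherence $2$-cell, that ``$A$-equivariant homomorphism $E \to Z$'' and ``homomorphism $E \to Z$ with a $2$-homomorphism $hf \to 0$'' encode literally the same information, and that the $2$-group structure on the quotient $[E/A]$ computed in $2\grp(\bibun)$ agrees with the given one on $G$. Both verifications are bookkeeping-heavy rather than conceptually deep; the first is controlled by the observation that multiplication by a fixed element is an equivalence (so that normalizing against it costs no additional data), and the second is exactly what the monadicity result is designed to deliver. I would carry out the coherence check once for the unital and associativity cells and then invoke symmetry to avoid grinding through the remaining diagrams.
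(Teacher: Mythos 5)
Your overall architecture parallels the paper's: exhibit $G$ as the quotient of $E$ by the $A$-action at the level of stacks (using principality of $E \to G$), transfer this colimit statement to smooth 2-groups via the monadic adjunction $F:\stack \rightleftarrows 2\grp(\stack):U$, and then identify descent data along $g$ for a kernel square $(Z,h,\psi)$ with the trivialization $\psi$ (the paper does this by identifying $E \times_G \cdots \times_G E$ with $E \times A \times \cdots \times A$, so that the category of extensions of $h$ to the simplicial object is contractible). However, the pivotal transfer step is where your argument has a genuine gap. You assert that the quotient $[E/A]$ is the coequalizer of the reflexive pair $A \times E \rightrightarrows E$ and that ``a monadic forgetful functor creates such coequalizers.'' Neither half survives scrutiny here. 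First, in a bicategory the quotient is the (homotopy) colimit of the full \v{C}ech nerve $E \leftleftarrows E \times_G E \lefttriplearrows \cdots$, not of its 1-truncation: descent data carries 2-cell coherences, which is exactly why the paper's Lemma \ref{Lma:TechnicalSimplicialColimit} is stated for the whole augmented simplicial object. Second, and more seriously, the higher Barr--Beck theorem only guarantees that a monadic functor creates colimits of $U$-\emph{split} simplicial objects, and the \v{C}ech nerve of $g$ is not $U$-split: a splitting would amount to a global section of the principal bundle $E \to G$, which does not exist in general. So monadicity does not, by itself, lift the stack-level quotient to a quotient in $2\grp(\stack)$; your ``the monadicity result is designed to deliver'' this is precisely the point that still needs proof.

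The paper fills exactly this hole with a separate argument: Barr--Beck is invoked only for the canonical free resolutions $FU^\bullet(X)$, which \emph{are} $U$-split (Corollary \ref{Cor:BarrBeckCor}), and the colimit of $E^\bullet$ in 2-groups is then computed by an interchange-of-colimits argument on the bisimplicial object $\{FU^p(E^{[q]})\}$: realizing in the $p$-direction first recovers $E^\bullet$, while realizing in the $q$-direction first uses that $F$ preserves colimits and that $U(E^\bullet) \to U(G)$ is a colimit of stacks (by principality), yielding $FU^\bullet(G)$, whose colimit is $G$. Your proposal would be repaired by replacing the reflexive-coequalizer claim with this lemma or an equivalent device; the remaining translation of coherent $A$-invariance into the trivialization $\psi$, which you sketch via invertibility of left multiplication in $Z$, is then the same bookkeeping the paper carries out at the simplicial level.
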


Before proving the above proposition, we must first introduce a technical lemma.
\begin{lemma} \label{Lma:TechnicalSimplicialColimit}
	If $A \to E \to G$ is an extension of smooth 2-groups, then the induced augmented simplicial object in smooth 2-groups
	\begin{equation*}
		G \leftarrow E \leftleftarrows E \times_G E \lefttriplearrows E \times_G E \times_G E \cdots
	\end{equation*}
	is a (homotopy) colimit diagram. 
\end{lemma}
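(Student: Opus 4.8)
The plan is to reduce the assertion, in two stages, to a descent statement in the ambient $(\infty,1)$-topos of sheaves of spaces on $\man$, where it is classical.

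\emph{Existence of the diagram.} First I would check that the iterated fiber products $E \times_G \cdots \times_G E$ exist. Since $E$ is an $A$-principal bundle over $G$, after pulling back along a covering bibundle the projection $g : E \to G$ becomes a trivial projection, so $g$ is a covering bibundle and is transverse to itself; Proposition \ref{PropPullBackOfBibunExist} then supplies the pullbacks. The forgetful functor $2\grp(\bibun) \to \bibun$ is monadic (Example \ref{Example:2GrpInStacksIsMonadic}) and hence creates limits, so these fiber products agree with those of the underlying stacks, and the augmented simplicial object is precisely the \v{C}ech nerve of $g$ together with its augmentation to $G$.

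\emph{From 2-groups to stacks.} A colimit indexed by $\Delta^{op}$ is a sifted colimit, so I would exploit that the relevant structure is compatible with sifted colimits. The free-monoid monad on $\stack$, $X \mapsto \coprod_n X^{\times n}$, is a coproduct of finite products and therefore preserves sifted colimits (finite products commute with sifted colimits, coproducts commute with all colimits); consequently the monadic forgetful functor $\stack^\otimes \to \stack$ creates sifted colimits. By Theorem \ref{thm:charof2groups} and Corollary \ref{cor:2grptogrpdismonadic}, $2\grp(\stack)$ is a strongly reflective subcategory of $\stack^\otimes$ (the $s$-local objects), so colimits there are obtained by reflecting colimits computed in $\stack^\otimes$. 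Combining these, it suffices to prove that the colimit of the underlying simplicial object in $\stack$ is $G$: the reflection fixes $G$, which is already a 2-group and hence $s$-local.

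\emph{Descent.} Finally I would compute the colimit in stacks by passing to the $(\infty,1)$-topos $\sh(\man, \cS)$ of sheaves of spaces for the surjective-submersion topology, whose $1$-truncated objects are exactly the smooth stacks. Because $E$ is an $A$-principal bundle, $g : E \to G$ admits local sections over a covering bibundle and is therefore an effective epimorphism. By descent (the Giraud axioms; Lurie, \emph{HTT} 6.2.3) the \v{C}ech nerve of an effective epimorphism is a colimit diagram with colimit $G$. The terms $E \times_G \cdots \times_G E$ are $1$-truncated and the colimit $G$ is $1$-truncated by hypothesis; since a colimit taken in $\sh(\man, \cS)$ that happens to be $1$-truncated also computes the colimit among $1$-truncated objects (a $1$-truncated target detects no difference between the two mapping spaces), $G$ is the colimit in smooth stacks as well.

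The delicate point, and the step I expect to be the main obstacle, is the truncation bookkeeping in the last stage: smooth stacks are not closed under arbitrary colimits inside $\sh(\man, \cS)$, so I must ensure this particular \v{C}ech colimit does not escape the $1$-truncated world. This is guaranteed precisely by first identifying the colimit as $G$ via descent and only afterward observing that $G$ is a stack, so the logical order matters. A secondary check is that $g$ is a genuine effective epimorphism and not merely an epimorphism of $\pi_0$-sheaves, but for a locally split map such as a principal bundle projection these notions coincide.
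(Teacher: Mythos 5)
Your proof is correct, but it takes a genuinely different route from the paper's. Both arguments rest on the same descent input --- that $U(G) \leftarrow U(E^\bullet)$ is a colimit diagram in $\stack$ because the principal bundle projection $g: E \to G$ is locally split --- but they diverge in how they transfer this colimit from stacks to 2-groups, which is the real issue since the forgetful functor $U: 2\grp(\stack) \to \stack$ is a right adjoint and has no a priori reason to preserve colimits. The paper circumvents this with the free--forgetful adjunction $F: \stack \rightleftarrows 2\grp(\stack): U$ and the $(\infty,1)$-categorical Barr--Beck theorem: the augmented object $X \leftarrow FU^\bullet(X)$ is $U$-split, so $\colim FU^\bullet(X) \simeq X$ (Corollary \ref{Cor:BarrBeckCor}), and a bisimplicial interchange-of-colimits argument applied to $\{FU^p(E^{[q]})\}$ --- computing the colimit first in the $p$-direction and then first in the $q$-direction --- identifies $\colim E^\bullet$ with $\colim FU^\bullet(G) \simeq G$. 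You instead exploit a structural fact the paper never uses: $\Delta^{op}$-colimits are sifted, the free-monoid monad $X \mapsto \coprod_n X^{\times n}$ preserves sifted colimits, so the forgetful functor $\stack^\otimes \to \stack$ \emph{creates} geometric realizations, and the further passage to 2-groups is a reflective localization at the $s$-local objects which fixes $G$. What each approach buys: the paper's argument needs only abstract monadicity and the Barr--Beck theorem, with no analysis of the monad's explicit formula, at the cost of the bisimplicial bookkeeping; yours is shorter and conceptually transparent, at the cost of identifying the free monoidal-object monad and verifying its preservation of sifted colimits. Your third stage is also a genuine addition rather than a difference: the paper simply asserts the descent statement in $\stack$, whereas you supply it via effective epimorphisms in $\sh(\man, \cS)$ and correctly handle the truncation bookkeeping (identify the colimit as $G$ in the topos first, then observe it is 1-truncated, so it is also the colimit among 1-truncated objects) --- and your logical ordering of those two steps is exactly the right one.
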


\begin{proof}[Proof of Proposition \ref{Prop:UniversalPropOfExten}, assuming Lemma \ref{Lma:TechnicalSimplicialColimit}]
	Let $E^\bullet$ denote the simplicial 2-group in stacks, 
\begin{equation*}
		E \leftleftarrows E \times_G E \lefttriplearrows E \times_G E \times_G E \cdots.
\end{equation*}	
Given a kernel square $A \to E \to X$, there exists a 2-homomorphism from $E^\bullet$ to the constant simplicial object $X$, which agrees with the given homomorphism $E \to X$ on the $\text{zero}^\text{th}$  objects. Moreover the category of such 2-homomorphisms of simplicial objects is contractible. This can be seen, for example, by identifying the $A$-stack $E \times_G \cdots \times_G E$ with the $A$-stack $E \times A \times \cdots \times A$. 
	
Thus, by Lemma \ref{Lma:TechnicalSimplicialColimit}, the homomorophism $E \to X$ factors uniquely through the homomorphism $E \to G$. Because both $A \to E \to G$ and $A \to E \to X$ are pull-back squares, this extends to an essentially unique morphism of kernel squares.
\end{proof}

It remains to prove Lemma \ref{Lma:TechnicalSimplicialColimit}. Up to this point we have been primarily concerned with 2-groups in the bicategory of presentable stacks in the surjective submersion topology on smooth manifolds. It is surely possible to provide a direct proof of Lemma \ref{Lma:TechnicalSimplicialColimit} entirely within this bicategory. However, for the sake of brevity, we will now also contemplate the bicategory $\stack$ of {\em all stacks} for the surjective submersion topology on manifolds. In other words, $\stack$ is the bicategory of all fibered categories over $\man$ (fibered in groupoids) which satisfy stack descent with respect to the surjective submersion topology. 

Unlike $\bibun$ this larger bicategory is complete and cocomplete (in the higher categorical sense), and there exists a monadic adjunction
\begin{equation*}
	F:\stack \rightleftarrows 2\grp(\stack): U
\end{equation*}
  between stacks and the bicategory of 2-groups in stacks (see Example \ref{Example:2GrpInStacksIsMonadic}). Thus we may now apply the higher categorical Barr-Beck Theorem \cite[Theorem 3.4.5]{Lurie07}, which has the following corollary. 

\begin{corollary} \label{Cor:BarrBeckCor}
	Let $F: \sC \rightleftarrows \sD: U$ be a monadic adjunction of bicategories (for example $\sC= \stack$ and $\sD =  2\grp(\stack)$). Then for any object $X \in \sD$, the colimit of the simplicial object $FU^{\bullet}(X)$ exists in $\sD$ and agrees with the object $X$. 
\end{corollary}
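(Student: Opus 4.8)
The statement is a formal consequence of the (higher) Barr--Beck theorem, and the plan is to exhibit the augmented simplicial object $FU^\bullet(X) \to X$ as a $U$-split colimit diagram. Write $T = UF$ for the induced monad on $\sC$ and recall that monadicity supplies an equivalence $\sD \simeq \sC^T$ under which $U$ becomes the forgetful functor to $\sC$. The simplicial object $FU^\bullet(X)$ is the canonical resolution $[n] \mapsto (FU)^{n+1}(X)$ associated to the comonad $FU$ on $\sD$, with face maps assembled from the counit $\epsilon \colon FU \to \id_\sD$ and degeneracies from $F\eta U$; its augmentation to $X$ is $\epsilon_X \colon FUX \to X$.

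First I would apply $U$ and analyze the resulting augmented simplicial object in $\sC$. Using the identity $U(FU)^{n+1} = (UF)^{n+1}U = T^{n+1}U$, the image $U\,FU^\bullet(X) \to UX$ is precisely the bar resolution
\begin{equation*}
	\cdots \; T^3 UX \rightrightarrows T^2 UX \rightrightarrows T\,UX \xrightarrow{\ \theta\ } UX
\end{equation*}
of the $T$-algebra $(UX, \theta)$, where $\theta = U\epsilon_X$ is the algebra structure coming from the equivalence $\sD \simeq \sC^T$. The key point is that this augmented simplicial object is split in $\sC$: the unit $\eta$ furnishes extra degeneracies $\eta_{UX}, \eta_{T\,UX}, \dots$ obeying the simplicial identities of a contraction. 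Consequently $U\,FU^\bullet(X) \to UX$ is an absolute colimit diagram, hence a colimit diagram preserved by every functor; in particular $UX \simeq \colim U\,FU^\bullet(X)$ in $\sC$.

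By definition this says that $FU^\bullet(X)$ is a $U$-split simplicial object in $\sD$. Since $U$ is monadic, the higher Barr--Beck theorem guarantees that $U$ is conservative and preserves colimits of $U$-split simplicial objects, and therefore creates them. Thus $\colim_{\Delta^{op}} FU^\bullet(X)$ exists in $\sD$, is preserved by $U$, and its image under $U$ is the split colimit $UX$ computed above; the created colimit is the augmentation itself, whence $\colim FU^\bullet(X) \simeq X$, as claimed.

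The main obstacle I anticipate is bicategorical bookkeeping rather than anything conceptual: one must verify that the extra degeneracies $\eta_{T^{n}UX}$ assemble into coherent splitting data in the bicategorical (as opposed to strict $1$-categorical) setting, and that the notion of \emph{$U$-split simplicial object} is taken in exactly the sense required by the hypotheses of the version of Barr--Beck being invoked. Both reduce to the triangle identities for $(F, U, \eta, \epsilon)$ together with the comonad coherence of $FU$, and are standard once spelled out, but the coherence checks should be stated carefully rather than asserted.
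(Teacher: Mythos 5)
Your proof is correct and takes essentially the same route as the paper: the paper's own proof is exactly the observation that the augmented simplicial object $X \leftarrow FU^\bullet(X)$ is $U$-split, followed by an immediate appeal to the higher Barr--Beck theorem (Theorem 3.4.5 of \cite{Lurie07}). You have merely made explicit the splitting via the unit $\eta$ and the identification $U(FU)^{\bullet+1}(X) \simeq T^{\bullet+1}UX$, details the paper leaves implicit.
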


\begin{proof}
	This follows immediately from Theorem 3.4.5 in \cite{Lurie07}, as the augmented simplicial object $X \leftarrow FU^\bullet(X)$ is $U$-split. 
\end{proof}

\begin{proof}[Proof of Lemma \ref{Lma:TechnicalSimplicialColimit}]
	Consider the augmented simplicial object
	\begin{equation*}
			G \leftarrow E \leftleftarrows E \times_G E \lefttriplearrows E \times_G E \times_G E \cdots
	\end{equation*} 
	which we write $G \leftarrow E^\bullet$. Because $E$ is an $A$-principal bundle over $G$, the diagram $U(G) \leftarrow U(E^\bullet)$ is a colimit diagram in $\stack$ (and hence in presentable stacks). Since $F$ is a left adjoint, it preserves colimits. Hence the diagram $FU(G) \leftarrow FU(E^\bullet)$ is a colimit diagram in 2-groups in stacks. 
	
	We may now consider the bisimplicial diagram of 2-groups in stacks $\{FU^p(E^{[q]})\}$, where $E^{[q]} = E \times_G \cdots \times_G E$, $q$-times. We may compute the colimit of this diagram in two ways, each consisting of two steps. We may first take the colimit in the $p$-direction, after which we obtain the simplicial diagram $E^\bullet$. Hence the colimit of this bisimplicial 2-group is precisely the colimit of $E^\bullet$. 
	
	On the other hand, we may instead take the colimit first in the $q$-direction. For each fixed $p$, this is the colimit of $FU^p(E^\bullet)$, which we have already observed is $FU^p(G)$. Thus the colimit of this bisimplicial 2-group is also the colimit of the simplical digram $FU^\bullet(G)$. By Corollary \ref{Cor:BarrBeckCor}, this is precisely $G$. 
\end{proof}



Given a smooth abelian 2-group $A$ and a smooth 2-group $G$ the central extensions of $G$ by $A$ form a bicategory  $Ext(G; A)$. A 1-morphism of extensions $(E,f, g, \phi) \to (E', f', g', \phi')$, consists of an equivalence of 2-groups $h: E \to E'$, together with 2-isomorphisms $\alpha: hf \cong f'$ and $\beta: g'h \cong g$, such that the following pasting diagram is the identity:
\begin{center}
\begin{tikzpicture}
	\node (L) at (0, 0) {$A$};
	\node (MT) at (2, 0.5) {$E$};
	\node (MB) at (2, -0.5) {$E'$};
	\node (R) at (4, 0)    {$G$};
	\draw [->] (L) -- node [above left] {$f$} (MT);
	\draw [->] (L) -- node [below left] {$f'$} (MB);
	\draw [->] (MT) -- node [above right] {$g$} (R);
	\draw [->] (MB) -- node [below right] {$g'$} (R);
	\draw [->] (MT) -- node [left] {$h$} (MB);
	\draw [->] (L) to [out = 90, in = 90] node [above] {$0$}(R) ;
	\draw [->] (L) to [out = -90, in = -90] node [below] {$0$}(R) ;
	\node at (2, 1) {$\Downarrow \phi$};
	\node at (2, -1) {$\Downarrow \phi'$};
	
\end{tikzpicture}
\end{center}
The 2-morphisms in $Ext(G;[pt/A])$ are given by 2-isomorphisms $\psi: h \to h'$, such that $\alpha = \alpha' \circ (\psi * id_f) $ and $\beta = \beta' \circ (if_g * \psi)$. By construction the bicategory $Ext(G; A)$ is contravariantly 2-functorial in $G$, covariantly 2-functorial in $A$, and commutes with products.  Thus the usual Baer sum operation equips $Ext(\G; \A)$ with the structure of a symmetric monoidal bicategory (See \cite{GPS95, KV94, KV94-2, BN96, DS97} and \cite[Chap. 3]{SchommerPries09}). 

Since the 2-category of discrete 2-groups embeds into the bicategory of smooth 2-groups (see Remark \ref{RmkDiscreteInSmooth}) this gives a notion of extension of discrete 2-groups. 
Just as the theory of discrete groups is more elementary then the theory of topological groups, so too the theory of discrete 2-groups is easier then the theory of topological (or smooth) 2-groups. Nevertheless, we can learn many things by comparing these two settings. For example, extensions of groups $G$ by abelian groups $A$ are categorized according to the induced action of $G$ on $A$. In the topological setting, such actions are more problematic because such an action should be required to be a {\em continuous} homomorphism $G \to Aut(A)$, where this latter group is the group of continuous automorphisms of $A$. This is defined using the internal hom for topological spaces. 

While this doesn't pose a significant problem for finite dimensional Lie groups, there are further issues to contend with in the case of smooth stacks. Noohi \cite{Noohi08} and Carchedi \cite{Carchedi09} have independently gone to some pains to understand conditions under which internal homs exist in the setting of topological stacks.\footnote{The internal hom always exists as a fibered category and automatically satisfies stack descent. The main problem is in proving that such fibered categories are {\em presentable} by smooth or topological groupoids.} The smooth situation is likely more difficult. 

These difficulties disappear in the discrete setting, where internal homs for the 2-category of groupoids are well known to exist. Even though the general theory of smooth actions of groups presents problems, it is still possible to define {\em central} extensions merely by comparing with the discrete case. An extension of topological groups $A \to E \to G$ is a central extension precisely when it is a central extension of discrete groups, after forgetting the topology. We will employ the same strategy to define central extensions of smooth 2-groups. 

\begin{lemma}
Given an extension of discrete 2-groups $A \to B \to C$ with $A$ an abelian 2-group, there exists a homomorphism of 2-groups $C \to Aut(A)$, unique up to unique 2-homomorphism, where $Aut(A)$ is the automorphism 2-group of $A$. 
\end{lemma}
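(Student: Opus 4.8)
The plan is to transplant the classical construction for a central extension of ordinary groups, where the conjugation action of $B$ on the normal abelian subgroup $A$ descends along $B \to C$. Since the entire argument takes place among \emph{discrete} $2$-groups, Remark \ref{RmkDiscreteInSmooth} lets us work in the honest $2$-category of groupoids, where the automorphism $2$-group $\Aut(A)$ exists without any of the internal-hom subtleties discussed above, and a homomorphism $C \to \Aut(A)$ is the same datum as an action of $C$ on $A$ by $2$-group automorphisms.

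First I would construct the conjugation action of $B$ on $A$. Using the $2$-group structure of $B$ and the inclusion $f: A \to B$, form the conjugation $1$-morphism $\mathrm{conj}: B \times A \to B$, $(b,a) \mapsto b\, f(a)\, b^{-1}$. Composing with $g: B \to C$ and invoking the trivialization $\phi: gf \cong 0$ together with the fact that $g$ is a homomorphism produces a canonical $2$-isomorphism $g \circ \mathrm{conj} \cong 0$. Since the extension exhibits $A$ as the pull-back of the cospan $0 \to C \leftarrow B$, a $1$-morphism into $A$ is precisely a $1$-morphism into $B$ equipped with a trivialization of its image in $C$; hence $\mathrm{conj}$ lifts to $\gamma: B \times A \to A$. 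The coherence $2$-isomorphisms making $\gamma$ an action \emph{by $2$-group automorphisms}, and the structure $2$-isomorphisms $F_2, F_0$ of the associated homomorphism $c: B \to \Aut(A)$, are all inherited from the associator, unitors, and multiplication of $B$ (conjugation by a product is the composite of conjugations, and conjugation preserves the product on $A$).

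The step where abelianness of $A$ is essential is showing that the restriction $c \circ f: A \to \Aut(A)$ is coherently trivial. On objects this is the identity $f(a')\,f(a)\,f(a')^{-1} \cong f(a)$, which I would extract from the symmetry $\beta: m \cong m \circ \tau$ of the abelian $2$-group $A$ transported along $f$, giving $f(a')f(a) \cong f(a)f(a')$ and hence a trivialization of inner conjugation. Promoting this to a genuine $2$-homomorphism $c \circ f \cong 0$ — that is, checking compatibility with the coherence data of Figures \ref{fig:2grpHom1}--\ref{fig:2grpHom3} — is where the hexagon axiom for $\beta$ enters, and this is the most delicate bookkeeping in the proof.

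Finally I would descend $c$ along $g$. By Lemma \ref{Lma:TechnicalSimplicialColimit}, $C$ is the (homotopy) colimit of the simplicial $2$-group $E^\bullet = (B \leftleftarrows B \times_C B \lefttriplearrows \cdots)$, so producing $\bar c: C \to \Aut(A)$ with $\bar c \circ g \cong c$ reduces to extending $c$ to a map of simplicial $2$-groups from $E^\bullet$ to the constant object $\Aut(A)$. Exactly as in the proof of Proposition \ref{Prop:UniversalPropOfExten}, the identifications $B \times_C \cdots \times_C B \cong B \times A \times \cdots \times A$ recast $E^\bullet$ as a bar-type construction on the $A$-action, so the triviality established in the previous step forces the simplicial coherence data to exist and to form a contractible choice. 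The universal property of the colimit then delivers $\bar c$ together with its essential uniqueness, which is precisely the ``unique up to unique $2$-homomorphism'' clause. The main obstacle is therefore not the existence of the factoring map but the coherence of the trivialization $c \circ f \cong 0$ propagated through all simplicial levels; once that is secured, the factorization through $C$ is formal.
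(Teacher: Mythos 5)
Your proposal is correct, and its first two steps coincide with the paper's proof: the paper likewise uses conjugation in $B$ (there organized pointwise, as a family of homomorphisms $f^b = b \otimes [f(-) \otimes \overline{b}]$ lifted to $h(b): A \to A$ through the universal property of the pull-back square $A = 0 \times_C B$, exactly your lift of $\mathrm{conj}$ to $\gamma: B \times A \to A$), and likewise uses the braiding of $A$ to trivialize the restriction $h \circ f: A \to \Aut(A)$. Where you genuinely diverge is the descent step. The paper does \emph{not} run the simplicial argument again: it forms the auxiliary square $A \to B \xrightarrow{(g,h)} C \times \Aut(A)$, checks that this is a kernel square, and then invokes the already-established initiality of the extension's kernel square (Proposition \ref{Prop:UniversalPropOfExten}) to get an essentially unique morphism of kernel squares, projecting onto the second factor to obtain $C \to \Aut(A)$. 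The product with $C$ is not decorative: the square $A \to B \to \Aut(A)$ by itself is generally \emph{not} a kernel square ($A$ need not be the full homotopy kernel of $h$), so the stated universal property cannot be applied to it directly, and the $C \times \Aut(A)$ device repairs this. You instead bypass the proposition and descend along $g$ by unwinding its proof, i.e.\ by applying Lemma \ref{Lma:TechnicalSimplicialColimit} directly and building a map of simplicial 2-groups $E^\bullet \to \Aut(A)$ via the identifications $B \times_C \cdots \times_C B \cong B \times A \times \cdots \times A$. This is legitimate, because — as you correctly observe, and as the paper's own proof of Proposition \ref{Prop:UniversalPropOfExten} implicitly confirms — constructing the simplicial extension and its contractible space of coherence data only uses the trivialization $c \circ f \cong 0$ together with the principal-bundle structure of $B$ over $C$, not any pull-back condition on the target. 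What each route buys: yours avoids having to verify that the auxiliary $(g,h)$-square is a kernel square (a check the paper dispatches with ``readily checked'' but which is not entirely trivial), at the price of redoing the simplicial coherence/contractibility bookkeeping for the target $\Aut(A)$; the paper's route reuses the universal property as a black box, making the final step purely formal and keeping all simplicial work quarantined inside one proposition. The uniqueness clause comes out the same way in both: from initiality in the paper, from the contractibility of the simplicial extension data plus the colimit universal property in yours.
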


\begin{proof}
Let $f: A \to B$,  $g: B \to C$, and $\phi: gf \to 0$ be the homomorphisms and 2-homomorphism of the extension of 2-groups. Choose a functorial assignment $b \mapsto \overline b$ of weak inverses for the elements of $B$ together with functorial isomorphisms $b \otimes \overline b \cong 1$. Such an assignment is unique up to unique isomorphism. 

For each object $b \in B$ we may form an automorphism of $B$ given by conjugation. Specifically we consider the functor defined on object $x \in B$ by $x \mapsto b \otimes [ x \otimes \overline b]$. This can be made a self homomorphism of $B$ by using the structure maps of $B$. It is compatible with composition and induces a homomorphism of 2-groups $B \to Aut(B)$.

Pre-composing with $f$ yields, for each $b$,  a new homomorphism $f^b: A \to B$ together with a 2-homomorphism $\phi^b: g f^b \to 0$. On objects we have $f^b(a) = b \otimes [f(a) \otimes \overline{b}]$, where $\overline{b}$ is the functorial weak inverse of $b$. The structure morphisms of $f^b$ and $\phi^b$ are canonically induced by those of $f$, $\phi$, $g$, and the 2-group structure of $B$. A morphism $b \to b'$ in $B$ induces a natural isomorphism $f^b \to f^{b'}$.
Thus by the universal property of the pull-back, for each object of $b$ we have a homomorphism as in the following diagram.
\begin{center}
\begin{tikzpicture}
	\node (A) at (-1, 3) {$A$};
	\node (LT) at (0, 1.5) {$A$};
	\node (LB) at (0, 0) 	 {$0$};
	\node (RT) at (2, 1.5) {$B$};
	\node (RB) at (2, 0)    {$C$};
	\draw [->] (LT) -- node [left] {$$} (LB);
	\draw [->] (LT) -- node [above] {$f$} (RT);
	\draw [->] (RT) -- node [right] {$g$} (RB);
	\draw [->] (LB) -- node [below] {$$} (RB);
	\node at (1, 0.75) {$\Downarrow \phi$};
	\node at (0.3, 1.2) {$\ulcorner$};
	\node at (1.7, 0.3) {$\lrcorner$};
	\draw [->] (A) to [out = 0, in = 120] node [above] {$f^b$}(RT);
	\draw [->, dashed] (A) to node [above right] {$h(b)$}(LT);
	\draw [->] (A) to [out = 270, in = 120] node [above] {$$}(LB);
\end{tikzpicture}
\end{center}
Morphisms $b \to b'$  in $B$ induce 2-homorphisms $h(b) \to h(b')$. The assignment $h: b \mapsto f^b$, $h: ( b \to b') \mapsto (f^b \to f^{b'})$ is not strictly canonical, but depends  upon a contractible category of choices. 

The assignment $b \mapsto h(b)$ is compatible with the multiplication in $B$ in the sense that $h(b \otimes b') \cong h(b) \circ h(b')$. These isomorphisms may be chosen to be functorial and yield a homomorphism of 2-groups $h: B \to Aut(A)$. Again this choice is unique up to unique isomorphism. Pre-composition yields a homomorphism $A \to Aut(A)$. However, if $A$ is abelian, then the braiding allows us to canonically trivialize this composite. 

We now use the universal property of the extension $A \to B \to C$ to factor $h$ by an essentially unique homomorphism $B \to C \to Aut(A)$. The trivialization of $A \to Aut(A)$ permits us to form the following square:
\begin{equation*}
\begin{tikzpicture}[baseline = 0.75 cm]
	\node (LT) at (0, 1.5) {$A$};
	\node (LB) at (0, 0) 	 {$0$};
	\node (RT) at (2, 1.5) {$B$};
	\node (RB) at (2, 0)    {$C \times Aut(A)$};
	\draw [->] (LT) -- node [left] {$$} (LB);
	\draw [->] (LT) -- node [above] {$f$} (RT);
	\draw [->] (RT) -- node [right] {$(g, h)$} (RB);
	\draw [->] (LB) -- node [below] {$$} (RB);
	\node at (1, 0.75) {$\Downarrow \phi$};
\end{tikzpicture}
\end{equation*}
This square is readily checked to be a kernel square. Since $A \to B \to C$ is the initial such kernel square, there exists an essentially unique morphism of kernel squares from $A \to B \to C$ to the above kernel square. In particular this consists of a homomorphism $C \to C \times Aut(A)$, and projecting to the second factor yields the desired homomorphism $C \to Aut(A)$.
\end{proof}

Given a smooth 2-group, we obtain a discrete 2-group by applying the forgetful 2-functor $U:\bibun \to \gpd$, which forgets the topology. Thus to every  extension of smooth 2-group we get a corresponding extension of discrete 2-groups. 

\begin{definition}
	An extension of discrete 2-groups $A \to B \to C$, with $A$ abelian, is {\em central} if the induced homomorphism $C \to Aut(A)$ is isomorphic to the trivial homomorphism. An extension of smooth 2-groups is {\em central} if the corresponding extension of discrete 2-groups is central.
\end{definition}
By the work of \cite{BL04}, every discrete 2-group is equivalent to a {\em skeletal} 2-group\footnote{A discrete 2-group $\Gamma$ is skeletal if the for all objects $x, x' \in \Gamma$ the condition $x \cong x'$ implies $x = x'$.} and these are classified by the following invariants:
\begin{enumerate}
\item A group $\pi_0 \Gamma$ = isomorphism classes of objects,
\item An abelian $\pi_1\Gamma = Hom_\Gamma( 1,1)$,
\item An action $\rho$ of $\pi_0\Gamma$ on $\pi_1\Gamma$, (induced by conjugating an automorphism of $1$ by an object in $G$),
\item The k-invariant $[a] \in H^3(\pi_0\Gamma; \pi_1 \Gamma, \rho)$, which is determined by the associator of $\Gamma$,
\end{enumerate}
see \cite{BL04} for details. Part of this classification is the construction of a 2-group from a this data. This construction will play a role in what follows, so we review it. Consider the data $(G, A, \rho, \alpha)$ where $G$ is a group, $A$ is an abelian group, $\rho$ is an action of $G$ on $A$ and $\alpha \in Z^3_\text{grp}(G; A, \rho)$ is a group cocycle. Then we may form the following skeletal 2-group $\Gamma(G, A, \rho, \alpha)$: The objects of $\Gamma$ are the elements $G$, the morphisms of $\Gamma$ are the product space $G \times A$, with both source and target maps the projection to $G$. The automorphisms of each object are identified with the fiber $A$, as a group. The monoidal structure is given by the group multiplication in $G$ on objects and the group multiplication of $A \rtimes_\rho G$ on morphisms. It is strictly associative, nevertheless we equip it with a nontrivial associator determined by $\alpha$. The associator is given by $a_{g_0, g_1, g_2} = \alpha ({g_0, g_1, g_2} ) \in A $, using the identification of $A$ with the automorphisms of $g_0 g_1 g_2$. That $\alpha$ is a cocycle ensures that the pentagon identity is satisfied. The left and right unitors are uniquely determined by $\alpha$ and the requirement that the triangle identity hold. 

There exist canonical homomorphisms of 2-groups $f: [pt/A] \to \Gamma(G, A, \rho, \alpha)$ and $g: \Gamma(G, A, \rho, \alpha) \to G$ given by the obvious inclusion and projection. The composition $gf$ is equal to the zero map $[pt/A] \to G$, which in this case has no non-identity automorphisms.

\begin{lemma}
Consider an extension of discrete 2-groups of the form 
	\begin{center}
\begin{tikzpicture}[thick]
	
	\node (L) at (2,0) 	{$\downdownarrows$};

	\node (LA) at (2,.5) 	{$A$};
	\node (LB) at (2,-.5) 	{$pt$};
	\node (M) at (4, 0) {$\downdownarrows$};
	\node (MA) at (4,.5) 	{$E_1$};
	\node (MB) at (4,-.5) 	{$E_0$};

	\node (R) at (6,0)	{$\downdownarrows$};
	\node (RA) at (6,.5)	{$G$};
	\node (RB) at (6, -.5)	{$G$};
	

	\draw [->] (L) -- node [above] {$f$} (M);
	\draw [->] (M) -- node [above] {$g$} (R);
\end{tikzpicture}
\end{center}
(the 2-homomorphism $\phi: gf \to 0$ is unique if it exists, hence is determined by $f$ and $g$). 
Then this extension is equivalent to an extension such that $E = \Gamma(G, A, \rho, \alpha)$, with $f$ and $g$ the canonical inclusion an projection homomorphisms. Moreover, all such inclusion-projection sequences are extensions, and such an extension is central if and only if the action $\rho$ is trivial. 
\end{lemma}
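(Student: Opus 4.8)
The plan is to use the principal-bundle hypothesis to put $E$ into skeletal form and then invoke the Baez--Lauda classification. First I would extract the homotopy invariants from the fiber sequence $[pt/A] \xrightarrow{f} E \xrightarrow{g} G$. Since $G$ is discrete, any covering bibundle trivializing the bundle admits a \emph{global} section (Remark~\ref{RmkDiscreteInSmooth}), so $E$ is in fact globally trivial and its underlying groupoid is equivalent to $\bigsqcup_{x \in G}[pt/A]$; in particular $E$ is equivalent to a skeletal 2-group. The homomorphism $g$ then induces an isomorphism of groups $\pi_0 E \cong G$ (the fiber $[pt/A]$ being connected), while $f$ induces an isomorphism $A = \pi_1[pt/A] \cong \pi_1 E = \Aut_E(1)$ ($G$ having no non-identity morphisms). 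By the skeletal classification of \cite{BL04}, such a 2-group is determined by the quadruple $(\pi_0 E, \pi_1 E, \rho, [\alpha])$, whence $E \simeq \Gamma(G, A, \rho, \alpha)$ for an action $\rho$ of $G$ on $A$ and a cocycle $\alpha \in Z^3_\text{grp}(G; A, \rho)$. Because the isomorphisms on $\pi_0$ and $\pi_1$ are realized precisely by $g$ and $f$, this equivalence of 2-groups is in fact an equivalence of \emph{extensions}, carrying $f$ and $g$ to the canonical inclusion and projection.

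Next I would prove the converse, that every canonical inclusion-projection sequence $[pt/A] \xrightarrow{f} \Gamma(G, A, \rho, \alpha) \xrightarrow{g} G$ is an extension. The composite $gf$ lands in the full subgroupoid on the unit $1_G$, which has no non-identity automorphisms, so the required 2-homomorphism $\phi: gf \to 0$ exists and is unique (as the statement already notes). It remains to verify that $\Gamma(G, A, \rho, \alpha)$ is an $A$-principal bundle over $G$: by construction its underlying groupoid is $\bigsqcup_{x \in G}[pt/A]$, and the $A$-action coming from $f$ and the monoidal multiplication exhibits it as $A$-equivariantly equivalent over $G$ to $G \times [pt/A]$. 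Hence it is globally, and in particular locally, trivial, so it is a principal bundle.

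Finally, for the centrality criterion I would compare the homomorphism $G = C \to \Aut(A)$ produced by the preceding lemma with the action $\rho$. Both are conjugation actions: the homomorphism of the preceding lemma is built by conjugating the inclusion $f$ by objects $b \in E$ and descending along the universal property of the extension, whereas $\rho$ is, in the skeletal model $\Gamma(G, A, \rho, \alpha)$, exactly the action obtained by conjugating the automorphisms of the unit (that is $\pi_1 = A$) by the objects $g \in G$ (that is $\pi_0$). Unwinding both constructions in the skeletal model shows the induced maps on $\pi_1$ coincide, so $C \to \Aut(A)$ equals $\rho$; consequently the extension is central if and only if $\rho$ is trivial. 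I expect this last identification to be the main obstacle: although both maps are morally ``conjugation,'' reconciling the abstract construction via the kernel-square universal property with the concrete skeletal action $\rho$ requires carefully tracking the structure morphisms of $f^b$ and checking that the resulting self-map of $\pi_1 E = A$ is precisely $\rho(g)$ for the class of $g$ in $\pi_0 E = G$.
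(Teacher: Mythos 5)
Your proposal is correct, but the main clause of the lemma is proved by a genuinely different route than the paper's. The paper skeletalizes $E \simeq \Gamma(H, B, \rho, \alpha)$ with \emph{a priori unknown} $H$ and $B$, and then argues entirely from universal properties: the pullback square forces $f_1: A \to B$ to be a split surjection (via $[pt/B] \to E$) with trivial kernel (via $[pt/\ker f_1] \to [pt/A]$), and the initiality of the kernel square (Proposition \ref{Prop:UniversalPropOfExten}), applied to $[pt/A] \to E \to H$, produces $k: G \to H$ and forces $g_0: H \to G$ to be an isomorphism. You instead exploit the principal-bundle hypothesis directly: over a discrete base, local triviality upgrades to global triviality (sections exist as in Remark \ref{RmkDiscreteInSmooth}), so the underlying groupoid of $E$ is $\bigsqcup_{x \in G}[pt/A]$, whence $\pi_0 E \cong G$ via $g$ and $\pi_1 E \cong A$ via $f$ (the latter using the homotopy-pullback lemma for extensions), and the skeletal classification of \cite{BL04} then yields $E \simeq \Gamma(G, A, \rho, \alpha)$ compatibly with $f$ and $g$. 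Your route is lighter on machinery --- it needs only the easy pullback lemma, not the kernel-square initiality, whose proof in the paper rests on Lemma \ref{Lma:TechnicalSimplicialColimit} and the higher Barr--Beck theorem --- while the paper's argument never has to unwind the bundle geometry and shows directly that the \emph{given} components $f_1$ and $g_0$ are isomorphisms. The one step you assert rather than prove is the upgrade of the 2-group equivalence to an equivalence of \emph{extensions}; it does go through, because homomorphisms into the discrete $G$ are determined up to unique 2-isomorphism by their effect on $\pi_0$, homomorphisms $[pt/A] \to \Gamma(G, A, \rho, \alpha)$ landing in the unit component and inducing the same map on $\pi_1$ are 2-isomorphic (the difference of the monoidal structure constants in $A$ supplies the modification), and the pasting compatibility is vacuous since $0: [pt/A] \to G$ admits no nonidentity 2-automorphisms --- a line to this effect would close the gap. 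For the converse and the centrality criterion you do essentially what the paper does (the paper is equally terse there), and your identification of the abstract conjugation homomorphism with $\rho$ is the right one: in $A \rtimes_\rho G$ one has $(0,g)(a,1)(0,g^{-1}) = (\rho(g)a, 1)$, which is exactly the unwinding the paper's phrase ``following the previous construction'' refers to.
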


\begin{proof}
By the work of \cite{BL04} we know that $E$ is equivalent to {\em some} skeletal 2-group $E \simeq \Gamma( H, B, \rho, \alpha)$, so it suffices to consider that case. The homomorphisms $f$ and $g$ are determined by their component homomorphisms $f_1: A \to B$ and $g_0: H \to G$. 

Consider the 2-group $[pt/ B]$. This has a canonical inclusion functor $\ell: [pt/B] \to E$ whose composition with $g$ is zero. Thus by the universal property of the pullback, there exists a (unique) homomorphism $[pt/B] \to [pt/A]$ which factors this inclusion. In particular $f_1$ must be a split surjection. 

Now consider the kernel $\ker f_1$, with its inclusion $j: \ker f_1 \to A$. This yields a homomorphism of 2-groups $j: [pt/ \ker f_1] \to [pt/ A]$, such that the following two diagrams commute (strictly):
\begin{center}
\begin{tikzpicture}
	\node (A) at (-1.5, 3) {$[pt/ \ker f_1]$};
	\node (LT) at (0, 1.5) {$[pt/A]$};
	\node (LB) at (0, 0) 	 {$0$};
	\node (RT) at (2, 1.5) {$E$};
	\node (RB) at (2, 0)    {$G$};
	\draw [->] (LT) -- node [left] {$$} (LB);
	\draw [->] (LT) -- node [above] {$f$} (RT);
	\draw [->] (RT) -- node [right] {$g$} (RB);
	\draw [->] (LB) -- node [below] {$$} (RB);
	\node at (1, 0.75) {$$};
	\node at (0.3, 1.1) {$\ulcorner$};
	\node at (1.7, 0.3) {$\lrcorner$};
	\draw [->] (A) to [out = 0, in = 120] node [above] {$0$}(RT);
	\draw [->] (A) to node [above right] {$j$}(LT);
	\draw [->] (A) to [out = 270, in = 140] node [left] {$!$}(LB);
\end{tikzpicture}
\qquad
\begin{tikzpicture}
	\node (A) at (-1.5, 3) {$[pt/ \ker f_1]$};
	\node (LT) at (0, 1.5) {$[pt/A]$};
	\node (LB) at (0, 0) 	 {$0$};
	\node (RT) at (2, 1.5) {$E$};
	\node (RB) at (2, 0)    {$G$};
	\draw [->] (LT) -- node [left] {$$} (LB);
	\draw [->] (LT) -- node [above] {$f$} (RT);
	\draw [->] (RT) -- node [right] {$g$} (RB);
	\draw [->] (LB) -- node [below] {$$} (RB);
	\node at (1, 0.75) {$$};
	\node at (0.3, 1.1) {$\ulcorner$};
	\node at (1.7, 0.3) {$\lrcorner$};
	\draw [->] (A) to [out = 0, in = 120] node [above] {$0$}(RT);
	\draw [->] (A) to node [above right] {$0$}(LT);
	\draw [->] (A) to [out = 270, in = 140] node [left] {$!$}(LB);
\end{tikzpicture}
\end{center}
By the universal property of the pullback this implies that $j \cong 0$ and hence $f_1: A \to B$ is an isomorphism.

Dually, consider the group $H$ viewed as a 2-group with only identity morphisms. The canonical projection $E = \Gamma(H, B, \rho, \alpha) \to H$ is a homomorphism such that the composite 
\begin{equation*}
[pt/A] \to E = \Gamma(H, B, \rho, \alpha) \to H
\end{equation*}
is isomorphic to the zero homomorphism (if such an isomorphism exists it is unique). Thus there exists an essentially unique morphism $[pt/A] \to K$, where $K = \ker (E \to H)$. Conversely, since the map $E \to G$ factors as $E \to H \to G$, we obtain a unique map $K \to [pt/A]$. These are easily checked to be inverses so that $K = [pt/A]$, and hence $[pt/A] \to E \to H$ is a kernel square. 

Thus by the universal property of the extension $[pt/A] \to E \to G$, there exists a (unique) group homomorphism $k:G \to H$ making the following diagram commute.
\begin{center}
\begin{tikzpicture}
	\node (RT) at (2, 1.5) {$H$};
	\node (RB) at (2, 0)    {$G$};
	\node (H) at (3.5, 0) {$H$};
	\draw [->] (RT) -- node [left] {$g_0$} (RB);
	\draw [->] (RT) -- node [above right] {$id$} (H);
	\draw [->, dashed] (RB) -- node [below left] {$k$} (H);
\end{tikzpicture}
\end{center}
In particular the kernel of $g_0: H \to G$ is zero. 

We may compose the map $E \to H$ with the homomorphism $g_0: H \to G$, and thereby obtain a map of kernel squares from $[pt/A] \to E \to G$ to itself which restricts to the map $g_0 \circ k: G \to G$. Since this kernel square is initial, this composite must be the identity on $G$. Thus $g_0$ is an isomorphism.

%

A similar argument shows that the inclusion-projection sequence $[pt/A] \to E \to G$ is always an extension. The automorphism 2-group of the 2-group $[pt/A]$ is equivalent to the group $Aut(A)$ viewed as a 2-group with only identity morphisms. Following the previous construction, we see that the induced map $G \to Aut( [pt/A]) = Aut(A)$ is precisely the action $\rho$, and thus the extension is central precisely when $\rho$ is trivial.
\end{proof}

	The above notion of central extension of smooth 2-group is more general then the notion introduced in \cite{Wockel08}. In particular it is invariant under equivalence of smooth 2-group, and includes the following examples not covered by Wockel's treatment. 

\begin{example}
	Let  $\Gamma = (G, A, a)$ be the 2-group from Example \ref{ExampleCocycle2Grp}. There exists a canonical central homomorphism (inclusion) $i: [pt/A] \to \Gamma$ and a homomorphism  (projection) $\pi: \Gamma \to G$. The composite is equal to the zero homomorphism $[pt/A] \to G$. This has a unique automorphism $\phi$, and with this choice the triple $(i, \pi, \phi)$ is a central extension. $\Gamma$ is a trivial principal bundle over $G$ in the sense that it is equivalent to $G \times [pt/A]$ as a $[pt/A]$-stack over $G$. 
\end{example}

\begin{example} \label{Example:FavExtension}
Let $A$ be an abelian Lie group. There is a unique homomorphism from $A$ to the abelian 2-group $[pt/A]$. This homomorphism factors as the composite
\begin{equation*}
A \stackrel{f}{\to} 0 \stackrel{g}{\to} [pt/A]
\end{equation*}
The automorphisms of $0 = gf : [pt/A] \to A$ are in canonical bijection with $\phi \in \hom(A, A)$. The triple $(f,g, \phi)$ is a central extension precisely when $\phi$ is an automorphism.  
\end{example}

\subsection{Smooth 2-Groups and $A_\infty$-Spaces}

Given a smooth 2-group $\Gamma$ we may obtain a space by taking the geometric realization $|\Gamma|$, see \cite{Segal68}. In the trivial case, when $\Gamma = G$, the resulting space is $|G| \cong G$ and hence is a topological group. This is too much to expect in general, and indeed the geometric realization functor, viewed as an assignment in the category of space, is not precisely functorial with respect to bibndles. It does however lead to a functor $\bibun \to $h-$\Top$ with values in the {\em homotopy category} of spaces, as we shall see.  

In particular any smooth 2-group gives rise to a group-like H-space, and this assignment is functorial.  In this section we will show that this can be improved upon to give an infinitely coherent multiplication ($A_\infty$-structure) on the geometric realization of any smooth 2-group. Everything in this section holds equally well in the topological setting, provided the source and target maps admit local sections. 



Given a Lie groupoid $\Gamma = (\Gamma_1 \rightrightarrows \Gamma_0)$ we may construct an associated groupoid. The target $t: \Gamma_1 \to \Gamma_0$ is a surjective submersion so we may construct the corresponding \v{C}ech groupoid $E^{(t)}\Gamma = (\Gamma_1 \times_{\Gamma_0}^{t,t} \Gamma_1 \rightrightarrows \Gamma_1)$, see Example \ref{ExampleCechGroupoids}. As always with \v{C}ech groupoids, there is a functor to the space $\Gamma_0$, viewed as a Lie groupoid. In this case that functor is given by the target map $t: E^{(t)}\Gamma \to \Gamma_0$. This map is an equivalence of Lie groupoids: the identity map $\iota: \Gamma_0 \to \Gamma_1$ induces a functor the other direction which is an inverse to $t$. This equivalence can be taken to be over the Lie groupoid $\Gamma_0$.

Moreover, there is a functor $\sigma: E^{(t)}\Gamma \to \Gamma$ which on objects is $s: \Gamma_1 \to \Gamma_0$, and on morphisms is given by $(f,g) \mapsto g \circ f^{-1}$. Passing to the nerve, we see on each level that we have a space $E^{(t)}\Gamma_n$, which consists of $n$-tuples of morphisms of $\Gamma$, all with the same target. There is an action by $\Gamma$ in the sense that post-composition gives a map,
\begin{equation*}
	\Gamma_1 \times_{\Gamma_0}^{s, t} E^{(t)}\Gamma_n \to E^{(t)}\Gamma_n
\end{equation*}
This action map is over $\Gamma_0 \times \Gamma_n$, and $E^{(t)}\Gamma_n$ becomes a bibundle from the space $\Gamma_n$ to $\Gamma$. 

Geometric realization of these simplicial spaces is stable under fiber products \cite[Cor. 11.6]{May72} (See also \cite{Lewis78} and \cite{MO34805})
and thus upon geometric realization we find that $|E^{(t)}\Gamma|$ is a (left principal) bibundle from the classifying space $|\Gamma|$ to the groupoid $\Gamma$, now viewed in the topological category rather then the smooth category\footnote{It is not clear from our description that $|E^{(t)}\Gamma|$ will be locally trivial over $|\Gamma|$, i.e. admit local sections. Indeed this fails for general topological groupoids. However, in the case that the spaces involved are locally contractible, local triviality follows from an argument identical to the proof of \cite[Prop. A.1]{Segal70}. Since local triviality is not used in our argument we will omit these details.}.  
 The bibundle $|E^{(t)}\Gamma|$ is the analog of the classifying bundle of a group: when $\Gamma = (G \rightrightarrows pt)$ is a Lie group, $|E^{(t)}G| = |EG|$ is exactly the classifying bundle in the usual sense. 

Moreover, the equivalence between $E^{(t)}\Gamma$ and $\Gamma_0$ induces a homotopy equivalence between $|E^{(t)} \Gamma|$ and  $\Gamma_0$ in the category of spaces over $\Gamma_0$, i.e. $|E^{(t)}\Gamma|$ is homotopy equivalent to the terminal object of $\Top_{\Gamma_0}$. This is the appropriate analog of {\em contractible} in the relative category $\Top_{\Gamma_0}$. When $\Gamma = G$ is a group, then $\Gamma_0= pt$ and hence $EG$ is contractible in the usual sense. Starting with the source map $s: \Gamma_1 \to \Gamma_0$ yields an analogous story, but the outcome is a {\em right} principal bibundle $|E^{(s)}\Gamma|$ from $\Gamma$ to $|\Gamma|$. Again $|E^{(s)}\Gamma| \simeq \Gamma_0$ as spaces over $\Gamma_0$. The inversion isomorphism allows us to canonically identify $\overline{|E^{(t)}\Gamma|} \cong |E^{(s)}\Gamma| $, thus we obtain an isomorphism of spaces $|E^{(t)}\Gamma| \times_{|\Gamma|} |E^{(s)}\Gamma| {\cong} \Gamma_1 \times_{\Gamma_0} |E^{(t)}\Gamma|$. Projection gives rise to a map, 
\begin{equation} \label{EqnCompose}
	|E^{(t)}\Gamma| \times_{|\Gamma|} |E^{(s)}\Gamma| {\to} \Gamma_1 , 
\end{equation}
which commutes with both the left and right $\Gamma$-actions.

\begin{lemma}
	Let $P$ be a (left principal) bibundle from the space $X$ to the Lie groupoid $\Gamma$. Then the composition $|E^{(s)}\Gamma| \circ P$ with the {\em right} principal bundle $|E^{(s)}\Gamma|$ is a space over $X$ homotopy equivalent to $X$ over $X$. In particular the space of sections is a contractible space. 
\end{lemma}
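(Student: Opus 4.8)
The plan is to identify the composite $|E^{(s)}\Gamma| \circ P$ with an explicit geometric realization and thereby reduce the claim to the classical fact that the realization of the \v{C}ech nerve of a surjective submersion recovers its base. First I would exploit that geometric realization is stable under the relevant fiber products \cite[Cor.~11.6]{May72}: the composition $\circ$ is the balanced product over $\Gamma$, i.e. a coequalizer of iterated fiber products over $\Gamma_0$, and realization preserves both coequalizers and these fiber products. Applying this to the simplicial space $E^{(s)}\Gamma_\bullet$ (whose $n$-th space consists of tuples $(g_0,\dots,g_n)\in\Gamma_1^{n+1}$ with common source, carrying the right $\Gamma$-action by precomposition) gives a canonical isomorphism $|E^{(s)}\Gamma| \circ P \cong |E^{(s)}\Gamma_\bullet \times_\Gamma P|$.

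The core of the proof is a levelwise computation. Since $P$ is a left-principal bibundle from the space $X$, the action map $\Gamma_1 \times^{s,\tau_P}_{\Gamma_0} P \to P \times_X P$, $(g,p)\mapsto (g\cdot p, p)$, is an isomorphism. I would use this to define
\begin{equation*}
	\Phi_n \colon E^{(s)}\Gamma_n \times_\Gamma P \to P^{[n+1]} := P \times_X \cdots \times_X P, \qquad [(g_0,\dots,g_n),p] \mapsto (g_0\cdot p,\dots,g_n\cdot p).
\end{equation*}
This is well defined and $\Gamma$-invariant because $(g_i\gamma)\cdot(\gamma^{-1}p) = g_i\cdot p$, and it is an isomorphism: its inverse sends $(p_0,\dots,p_n)$ (all with common image $x\in X$) to $[(g_0,\dots,g_n),p_0]$, where $g_i$ is the unique groupoid element with $g_i\cdot p_0 = p_i$ provided by simple transitivity. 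These maps are compatible with the faces and degeneracies, so $E^{(s)}\Gamma_\bullet \times_\Gamma P$ is isomorphic to the nerve of the \v{C}ech groupoid $X_P$ of $\sigma_P\colon P\to X$. Hence $|E^{(s)}\Gamma| \circ P \cong |X_P|$, with its structure map to $X$ being the realized augmentation.

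It then remains to show that $|X_P| \to X$ is a homotopy equivalence over $X$ with contractible space of sections. Because $P$ is a bibundle from the space $X$, the map $\sigma_P\colon P\to X$ is a surjective submersion and therefore admits local sections; over each $x\in X$ the fiber of $|X_P|\to X$ is the realization of the codiscrete (pair) groupoid on the nonempty set $\sigma_P^{-1}(x)$, which is contractible. The remaining task is to upgrade ``contractible fibers plus local sections'' to a genuine statement over $X$, and I expect this to be the main obstacle, because $\sigma_P$ need not admit a \emph{global} section (by Proposition \ref{PropSectionIsBundlization} that would force $P$ to be a bundlization). This is precisely the descent argument of \cite[Prop.~A.1]{Segal70}: in the locally contractible setting the local sections yield local trivializations, from which one glues a global deformation retraction of $|X_P|$ onto a section over $X$ and deduces that the space of sections is contractible. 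I would note that, were a global section available, Lemma \ref{mapsofcoversgiveequivofcech} would already give $X_P \simeq X$ at the level of Lie groupoids and the conclusion would be immediate; the content of the lemma is exactly that this persists homotopically for arbitrary surjective submersions.
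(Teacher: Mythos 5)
Your proof is correct, but it takes a genuinely different route from the paper's. The paper argues locally in $X$: it chooses a cover over which the bibundle is trivial, $P \cong \Gamma_1 \times_{\Gamma_0}^{s,f} X$, so that the composite becomes $|E^{(s)}\Gamma| \times_{\Gamma_0} X$, and then invokes the previously established fact that $|E^{(s)}\Gamma| \simeq \Gamma_0$ over $\Gamma_0$; the passage from local to global is compressed into the opening assertion that ``this is a local statement'' (implicitly a Dold-type argument that a fiberwise homotopy equivalence can be detected locally). You instead produce a global model of the composite that the paper never states: the levelwise simple-transitivity isomorphisms $\Phi_n$ identify $E^{(s)}\Gamma_\bullet \times_\Gamma P$ with the \v{C}ech nerve of $\sigma_P \colon P \to X$, so that $|E^{(s)}\Gamma| \circ P \cong |X_P|$, and the lemma reduces to the classical statement that the realization of the \v{C}ech nerve of a surjective submersion over a paracompact base is fiberwise contractible. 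Your verification of $\Phi_n$ (well-definedness via the balanced relation, smooth inverse via the inverse of the simple-transitivity isomorphism, simplicial compatibility) is complete, and your identification of the fibers of $|X_P| \to X$ as realizations of pair groupoids on nonempty sets is right. What each approach buys: the paper's is shorter and recycles the contractibility of $|E^{(s)}\Gamma|$ over $\Gamma_0$; yours shows the composite depends only on $\sigma_P \colon P \to X$ and not otherwise on $\Gamma$, makes the contractibility of the section space transparent (the fibers of $|X_P| \to X$ carry a join/convex structure, so one can interpolate between sections), and isolates the only genuinely analytic input --- partition-of-unity gluing of local sections --- into a single quotable lemma, the same \cite[Prop.~A.1]{Segal70}-style argument the paper itself cites in a footnote for local triviality. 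Both arguments ultimately rest on the same local-to-global step; you have relocated it to a standard reference and flagged it honestly, at the same level of rigor as (arguably greater than) the paper's ``this is a local statement.''
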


\begin{proof}
	This is a local statement and so it is enough to consider the case when we have a map $f: X \to \Gamma_0$ and $P \cong \Gamma_1 \times_{\Gamma_0}^{s, f} X$. The composition with $|E^{(s)}\Gamma|$ then becomes $|E^{(s)}\Gamma| \circ P \cong |E^{(s)}\Gamma| \times_{\Gamma_0} X$. But since $|E^{(s)}\Gamma| \simeq \Gamma_0$ over $\Gamma_0$ the space $P \circ |E^{(s)}\Gamma|$ is homotopy equivalent to $X$ over $X$. 
\end{proof}

\begin{corollary}\label{Fin}
 A bibundle $P$ from the Lie groupoid $G$ to the Lie groupoid $\Gamma$ gives rise to a contractible family of morphisms from $|G|$ to $|\Gamma|$ (described in the proof below). This association is compatible with composition, and hence yields a functor $\bibun \to$ h-$\Top$.
\end{corollary}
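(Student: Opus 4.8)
The plan is to turn a bibundle $P\colon G \to \Gamma$ into a span of spaces whose left leg is a homotopy equivalence, and to read off the induced map in h-$\Top$ from that span. Concretely, I would form the composite bibundle
\[
	W(P) := |E^{(s)}\Gamma| \circ P \circ |E^{(t)}G|,
\]
a (left principal) bibundle from the \emph{space} $|G|$ to the \emph{space} $|\Gamma|$. Since $P \circ |E^{(t)}G|$ is a left principal bibundle from the space $|G|$ to $\Gamma$, the Lemma immediately preceding this corollary applies with $X = |G|$: the structure map $\sigma\colon W(P) \to |G|$ is a homotopy equivalence over $|G|$, and its space of sections is contractible. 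Each section $s$ yields a map $\tau \circ s\colon |G| \to |\Gamma|$, where $\tau\colon W(P) \to |\Gamma|$ is the other structure map, and as $s$ ranges over the contractible space of sections these assemble into the asserted contractible family. Because any two sections are homotopic, they all represent one and the same morphism $[W(P)]\colon |G| \to |\Gamma|$ in h-$\Top$, namely $[\tau]\circ[\sigma]^{-1}$.

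For functoriality I would use the standard fact that a span $|G| \xleftarrow{\sigma} W \xrightarrow{\tau} |\Gamma|$ with $\sigma$ a homotopy equivalence is exactly the data of a morphism in h-$\Top$, and that composition of such spans (bibundle composition over the middle space, which here is an ordinary fibre product since $|\Gamma|$ is a space) computes the composite morphism. Thus it suffices to compare $W(R \circ P)$ with $W(R) \circ W(P)$ for composable bibundles $P\colon G \to \Gamma$ and $R\colon \Gamma \to K$. Associativity of bibundle composition gives
\[
	W(R) \circ W(P) = |E^{(s)}K| \circ R \circ \big( |E^{(t)}\Gamma| \circ |E^{(s)}\Gamma| \big) \circ P \circ |E^{(t)}G|,
\]
whereas $W(R\circ P) = |E^{(s)}K| \circ R \circ \Gamma_1 \circ P \circ |E^{(t)}G|$, with $\Gamma_1$ the identity bibundle of $\Gamma$. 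So everything reduces to comparing the ``sandwiched'' bibundle $|E^{(t)}\Gamma| \circ |E^{(s)}\Gamma|$ with $\Gamma_1$.

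This comparison is exactly what the map (\ref{EqnCompose}) supplies: being invariant under the relevant $\Gamma$-action, it descends to a $\Gamma$--$\Gamma$-equivariant map $|E^{(t)}\Gamma| \circ |E^{(s)}\Gamma| \to \Gamma_1$, and under the isomorphism $|E^{(t)}\Gamma| \times_{|\Gamma|} |E^{(s)}\Gamma| \cong \Gamma_1 \times_{\Gamma_0} |E^{(t)}\Gamma|$ together with the equivalence $|E^{(t)}\Gamma| \simeq \Gamma_0$ over $\Gamma_0$, it is a homotopy equivalence. Sandwiching this equivalence between $R, P$ and the classifying bibundles then gives $W(R) \circ W(P) \simeq W(R \circ P)$, so $[W(R\circ P)] = [W(R)] \circ [W(P)]$. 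The unit case is similar: $W(G_1) = |E^{(s)}G| \circ |E^{(t)}G| \simeq |G|$ over $|G|$ represents $\id_{|G|}$. Hence $W$ descends to a functor $\bibun \to$ h-$\Top$.

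The hard part will be controlling homotopy types under bibundle composition in the topological category. Composition is a coequalizer/relative fibre product, and one must know both that it has the correct homotopy type and that the equivalence $|E^{(t)}\Gamma| \circ |E^{(s)}\Gamma| \to \Gamma_1$ remains an equivalence after being sandwiched between $R, P$ and the classifying bibundles. The cleanest way around this is to note that the relevant composites are geometric realizations of simplicial spaces, for which fibre products are well behaved (as already invoked above via May's result), so that the sandwiched map is a fibrewise homotopy equivalence and therefore an equivalence on total spaces; contractibility of the section space then guarantees that the whole assignment is coherent rather than merely well defined on homotopy classes.
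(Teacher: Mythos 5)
Your proposal is correct and takes essentially the same route as the paper: the same triple composite $|E^{(s)}\Gamma| \circ P \circ |E^{(t)}G|$, the same appeal to the preceding lemma to get a contractible section space whose sections, composed with the projection to $|\Gamma|$, give the family of maps, and the same use of the bi-equivariant map of Equation \ref{EqnCompose} to identify the sandwiched piece $|E^{(t)}\Gamma| \times_{|\Gamma|} |E^{(s)}\Gamma|$ with $\Gamma_1$ when composing. The only difference is presentational: you verify functoriality by comparing the spans $W(R)\circ W(P)$ and $W(R\circ P)$ up to equivalence over the ends, whereas the paper packages the same comparison as a map of section spaces $\cS_P \times \cS_Q \to \cS_{P\circ Q}$ and cites bi-equivariance of Equation \ref{EqnCompose} for compatibility with composition.
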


\begin{proof}
Consider the following chain of bibundles:
	\begin{center}
\begin{tikzpicture}
\node (Z) at (-4,0) {$|\Gamma|$};

\node (A) at (0,1) {$\Gamma_1$};
\node (B) at (0,0) { $\Gamma_0$};
\draw [->] (A.255) -- (B.105);
\draw [->] (A.285) -- (B.75);

\node (Y) at (-2,1) {$|E^{(s)} \Gamma|$};
\draw [->>] (Y) -- node [above left] {$$} (Z);
\draw [->] (Y) -- node [above right] {$$}(B);

\node (C) at (4,1) {$G_1$};
\node (D) at (4,0) { $G_0$};
\draw [->] (C.255) -- (D.105);
\draw [->] (C.285) -- (D.75);

\node (E) at (2,1) {$P$};
\draw [->>] (E) -- node [above left] {$$} (B);
\draw [->>] (E) -- node [above right] {$$}(D);

\node (K) at (8,0) {$|G|$};

\node (J) at (6,1) {$|E^{(t)} G|$};
\draw [->>] (J) -- node [above left] {$$} (K);
\draw [->] (J) -- node [above right] {$$}(D);
\end{tikzpicture}.
\end{center}
Note that $P$ and $|E^{(t)}G|$ are left-principal bibundles, while $|E^{(s)} \Gamma|$ is a right-principal bibundle. 
Composing these three yields a space $K = |E^{(s)} \Gamma| \circ P \circ |E^{(t)} G|$, with two maps, one to $|\Gamma|$ and one to $|G|$. Moreover, by the previous lemma, $K$ is homotopy equivalent to $|G|$ over $|G|$. Hence the space $\cS$ of sections of $K$ over $|G|$ is contractible. Composing a section $|G| \to K$ with the projection $K \to |\Gamma|$ induce the desired family, $\cS \to \maps(|G|, |\Gamma|)$.  

For a composable pair of bibundles $P: G \to \Gamma$, and $Q: H \to G$, a pair of sections in $\cS_P$ and $\cS_Q$ gives rise to a section over $|H|$ of the space
\begin{equation*}
	|E^{(s)}\Gamma| \circ P \circ |E^{(t)G}| \times_{|G|} |E^{(s)}G| \circ Q \circ |E^{(t)H}|.
\end{equation*}
From this, we get an section over $|H|$ of $|E^{(s)}\Gamma| \circ P \circ Q \circ |E^{(t)H}|$, and hence a map $\cS_P \times \cS_Q \to \cS_{P \circ Q}$, by composing with the map of spaces
\begin{equation*}
	|E^{(t)}G| \times_{|G|} |E^{(s)}G| \to G_1
\end{equation*}
described in Equation \ref{EqnCompose}. The compatibility of this map with composition follows from the bi-equivariance of the map in Equation \ref{EqnCompose}.
\end{proof}

\begin{remark}
A more sophisticated approach is to consider the bicategory $\bibun$ as an $(\infty, 1)$-category. Then the above corollaries and proposition may be summarized by saying that there is an  $\infty$-functor from $\bibun$ to the $\infty$-category of topological spaces. 
\end{remark}

\begin{corollary}
	The geometric realizations of Morita equivalent Lie groupoids are homotopy equivalent.
\end{corollary}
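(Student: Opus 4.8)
The plan is to obtain this as an essentially formal consequence of Corollary \ref{Fin}. That corollary upgrades the geometric realization to a genuine functor $|\cdot| : \bibun \to$ h-$\Top$ landing in the homotopy category of spaces: it sends a Lie groupoid $\Gamma$ to $|\Gamma|$ and a bibundle $P : G \to \Gamma$ to a well-defined homotopy class $|P| : |G| \to |\Gamma|$, the point being that the contractible family of maps produced there collapses to a single morphism once one passes to h-$\Top$. So the whole content of the statement should already be packaged in that functor, and what remains is the elementary observation that functors preserve invertibility.

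First I would recall the characterization of Morita equivalence given earlier in the paper: two Lie groupoids $G$ and $\Gamma$ are Morita equivalent exactly when they are equivalent as objects of $\bibun$, i.e. the connecting bibundle is simultaneously left and right principal and hence admits a weak inverse. Thus Morita equivalence of $G$ and $\Gamma$ provides bibundles $P : G \to \Gamma$ and $Q : \Gamma \to G$ together with $2$-isomorphisms $P \circ Q \cong \mathrm{id}_\Gamma$ and $Q \circ P \cong \mathrm{id}_G$. Then I would apply the functor of Corollary \ref{Fin}. Since h-$\Top$ is an ordinary ($1$-)category, the functor collapses all $2$-isomorphisms, so these weak identities become honest equalities $|P| \circ |Q| = |\mathrm{id}_\Gamma| = \mathrm{id}_{|\Gamma|}$ and $|Q| \circ |P| = |\mathrm{id}_G| = \mathrm{id}_{|G|}$ in h-$\Top$, using that a functor preserves composition and identities. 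Hence $|P|$ is an isomorphism in h-$\Top$ with two-sided inverse $|Q|$, and an isomorphism in the homotopy category is by definition a homotopy equivalence, giving $|G| \simeq |\Gamma|$.

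The only genuinely load-bearing step, and the one I expect to be the main obstacle, is the assertion that $|\cdot|$ is functorial in the strong sense needed here: that $2$-isomorphic bibundles are sent to equal morphisms of h-$\Top$ and that identity bibundles go to identity homotopy classes. Both of these are precisely the compatibility-with-composition claim already established in Corollary \ref{Fin} (an equivalence in $\bibun$ cannot map to anything but an isomorphism in a target $1$-category once composition is respected). Granting that input, the argument is the standard remark that any functor carries invertible arrows to invertible arrows, so no further geometric work with the simplicial constructions $|E^{(s)}\Gamma|$ and $|E^{(t)}G|$ is needed.
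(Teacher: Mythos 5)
Your argument is correct, but it is not the route the paper takes. The paper's proof stays inside the geometric construction of Corollary \ref{Fin}: for a bibundle $P$ from $G$ to $\Gamma$ it forms the span $|G| \leftarrow K \to |\Gamma|$ with $K = |E^{(s)}\Gamma| \circ P \circ |E^{(t)}G|$, observes (via the preceding lemma on left-principal bibundles out of a space) that $K \to |G|$ is always a homotopy equivalence, and then notes that when $P$ is Morita --- i.e.\ simultaneously left- and right-principal --- the symmetric argument makes $K \to |\Gamma|$ a homotopy equivalence as well, so one never needs a weak inverse $Q$ at all. You instead argue purely formally: Morita equivalence means $P$ is an equivalence in $\bibun$, and any functor into the $1$-category h-$\Top$ sends equivalences to isomorphisms, i.e.\ homotopy equivalences. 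Your version is shorter and cleanly isolates the categorical content, but it leans on the full strength of the functoriality claim in Corollary \ref{Fin} --- in particular that $2$-isomorphic bibundles yield equal homotopy classes and that the identity bibundle realizes to the identity class --- whereas the paper's proof of that corollary only details compatibility with composition (the other two points do follow from the construction, e.g.\ isomorphic bibundles give homeomorphic spaces $K$ over $|G| \times |\Gamma|$, but you are trusting the stated corollary beyond what its written proof checks). The paper's approach buys something concrete in return: an explicit space $K$ and explicit equivalences over $|G|$ and $|\Gamma|$, with no appeal to weak inverses or to the collapse of $2$-cells. Both proofs are valid; yours is the more formal and generalizable one, the paper's the more self-contained and geometric one.
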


\begin{proof}
	In the setting of the previous proof, a bibundle $P$ between Lie groupoids gives rise to a space $K$ with maps to $|G|$ and $|\Gamma|$. In the case that $P$ is a Morita equivalence, both these maps are homotopy equivalences. 
\end{proof}

\begin{definition}
 A {\em topological operad} consists of a collection of spaces $S_n$ for each $n \geq 0$, together with composition maps:
 \begin{equation*}
		S_n \times S_{i_1} \times \cdots S_{i_n} \to S_{i_1 + \cdots + i_n}
\end{equation*}
 which are associative in the obvious way. An {\em algebra} for a topological operad $S$ is a space $X$ together with actions maps,
  \begin{equation*}
		S_n \times \underbrace{X \times \cdots X}_{n \text{ times}}\to X
\end{equation*}
which again are associative and compatible with the maps from $S$, in the obvious way.
\end{definition}

\begin{definition}[\cite{May72}]
	An {\em $A_\infty$-operad} is any topological operad with contractible spaces. An {\em $A_\infty$-space} is a space $X$ which is an algebra for an $A_\infty$-operad. 
\end{definition}

\begin{theorem}
 The geometric realization of a smooth 2-group in $\bibun$ is naturally an $A_\infty$-space. 
\end{theorem}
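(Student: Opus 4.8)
The plan is to exhibit an explicit $A_\infty$-operad $S$ acting on the space $X = |G|$, with the contractibility of the spaces $S_n$ supplied directly by Corollary \ref{Fin}. The starting observation is that geometric realization preserves finite products: since realization is stable under fiber products \cite[Cor. 11.6]{May72}, we have $|G^n| \cong |G|^n$ for all $n$. Consequently the multiplication bibundle $m \colon G \times G \to G$, and more generally any iterated multiplication bibundle $\mu_n \colon G^n \to G$ assembled from $m$ and $e$, gives rise via Corollary \ref{Fin} to a \emph{contractible} space $\cS_{\mu_n}$ of maps $|G|^n \to |G|$. These contractible spaces are to be the spaces of the operad.

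Concretely, I would fix for each $n \geq 0$ a chosen iterate $\mu_n$ (say the left-parenthesized one, with $\mu_0 = e$ and $\mu_1 = \id$) and set $S_n := \cS_{\mu_n}$. The operadic composition $S_n \times S_{k_1} \times \cdots \times S_{k_n} \to S_{k_1 + \cdots + k_n}$ is defined in two steps: first apply the composition pairing $\cS_{\mu_n} \times \prod_i \cS_{\mu_{k_i}} \to \cS_{\mu_n \circ (\mu_{k_1} \times \cdots \times \mu_{k_n})}$ of Corollary \ref{Fin}, and then transport along the canonical isomorphism of bibundles $\mu_n \circ (\mu_{k_1} \times \cdots \times \mu_{k_n}) \cong \mu_{k_1 + \cdots + k_n}$ built from the associator $a$ of the $2$-group. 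The operadic unit is the chosen point of $S_1 = \cS_{\id}$, and the action $S_n \times X^n \to X$ is evaluation of the map $|G|^n \to |G|$ determined by a section. Each $S_n$ is contractible by Corollary \ref{Fin}, so $S$ is an $A_\infty$-operad once the operad axioms are verified, and $X = |G|$ is then an algebra over it.

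Verifying the operad axioms is where the real work lies, and it is also the main obstacle. Unitality follows from the unitors $\ell, r$, and associativity of the operadic composition should reduce --- after unwinding the two kinds of reassociation data --- to the pentagon identity of Definition \ref{DefnWeak2group} together with the compatibility of the Corollary \ref{Fin} pairings under composition. The difficulty is that the pentagon only guarantees these reassociations agree up to coherent $2$-isomorphism, and the pairings of Corollary \ref{Fin} are themselves associative only up to the contractible family of choices made there; so the naive construction yields an operad only up to coherent homotopy rather than on the nose.

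I expect the cleanest way to discharge this obstacle is the reformulation recorded in the Remark following Corollary \ref{Fin}: that corollary is exactly the statement that geometric realization is a product-preserving $\infty$-functor from $\bibun$, regarded as an $(\infty,1)$-category, to the $\infty$-category $\Spaces$. A smooth $2$-group $G$ is in particular a monoid object in this $(\infty,1)$-category, so its image $|G|$ is a monoid object in $\Spaces$; and monoid objects in spaces are precisely $A_\infty$-spaces (grouplike ones, since here $(p_1, m)$ is an equivalence). Thus I would either rectify the homotopy-coherent operad above to a strict $A_\infty$-operad via a bar- or $W$-type resolution, or simply invoke this $\infty$-categorical statement directly; the explicit operad then serves as the concrete geometric model and the $\infty$-functor as the conceptual proof.
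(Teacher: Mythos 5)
Your construction is essentially the paper's: the operad spaces are exactly the contractible section spaces $S_n = \cS_{\mu_n}$ of $|E^{(s)}\Gamma| \circ \mu_n \circ (|E^{(t)}\Gamma| \times \cdots \times |E^{(t)}\Gamma|)$ over $|\Gamma|^n$, with composition given by the pairing of Corollary \ref{Fin} followed by transport along the rebracketing isomorphism built from the associator, and the action given by evaluation. The only divergence is your final paragraph, where the worry is misplaced in one direction and the remedy overreaches in the other. First, the pairing $\cS_P \times \cS_Q \to \cS_{P \circ Q}$ is a strictly defined map --- no choices enter: one forms the fibered product of sections and applies the map of Equation \ref{EqnCompose} --- and its compatibility with composition is strict, by the bi-equivariance of that map; the ``contractible family of choices'' in Corollary \ref{Fin} concerns which single map $|G| \to |\Gamma|$ a bibundle induces, not the pairing itself. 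Second, for the rebracketings, the pentagon of Definition \ref{DefnWeak2group} feeds Mac Lane's coherence theorem, which yields a \emph{unique} canonical isomorphism between any two bracketings of the iterated multiplication; uniqueness forces these identifications to compose coherently, so the operad axioms hold on the nose and no bar- or $W$-rectification is needed. This is precisely how the paper closes the argument. By contrast, invoking the $\infty$-categorical statement ``directly'' is not licensed by what is actually established: Corollary \ref{Fin} only constructs a functor $\bibun \to$ h-$\Top$ valued in the homotopy category, and the upgrade to a product-preserving $\infty$-functor appears in the paper only as an unproved remark; making it precise amounts to essentially the same coherence bookkeeping you are trying to avoid. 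So keep your explicit operad, replace the homotopy-coherent hedge by the uniqueness clause of coherence, and drop the appeal to the $\infty$-functor as a proof step.
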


\begin{proof} We must construct an $A_\infty$-operad and an action of this operad on $|\Gamma|$. Consider the composition of  bibundles, $|E^{(s)}\Gamma| \circ m \circ (|E^{(t)}\Gamma| \times |E^{(t)}\Gamma|)$. This is a space with a map to $| \Gamma|$ coming from $|E^{(s)}\Gamma|$ and a map to $|\Gamma| \times |\Gamma|$ coming from $ |E^{(t)}\Gamma| \times |E^{(t)}\Gamma|$. Thus if we choose a section over $|\Gamma| \times |\Gamma|$ we get a map of spaces. Let $S_2$ denote the space of sections over $|\Gamma| \times |\Gamma|$. Putting these maps together gives us a map,
\begin{equation*}
	S_2 \times |\Gamma| \times |\Gamma| \to |\Gamma|
\end{equation*}
which is continuous. Recall, however, that the space $S_2$ is contractible. Thus the contractible space $S_2$ parametrizes several multiplications for the space $|\Gamma|$. We mimic this and define contractible spaces of sections $S_n$ for all $n$. For $n \geq 2$ $S_n$ is the space of sections (over $|\Gamma|^n$) of:
\begin{equation*}
|E^{(s)}\Gamma| \circ \underbrace{m \circ (m \times 1) \circ (m \times 1 \times 1) \circ \cdots   }_{n \text{ times}} \circ
\underbrace{(|E^{(t)}\Gamma| \times |E^{(t)}\Gamma| \times \cdots \times  |E^{(t)}\Gamma|    )}_{n \text{ times}}
\end{equation*}
This is again a contractible space. We set $S_0 = S_1 = pt$. Since we started with a Lie 2-group we have a specified isomorphism of bibundles $m \circ (m \times 1) \cong m \circ (1 \times m)$, given by the associator. Mac Lane's coherence theorem ensures us that this extends to a canonical isomorphism between any two possible bracketings. For example the composition
\begin{equation*}
		m \circ (m \times 1) \circ (m \times m \times m) \circ ( 1 \times 1 \times m \times 1 \times m \times 1)
\end{equation*}
is canonically isomorphic to the composition, 
\begin{equation*}
\underbrace{ m \circ  (m \times 1) \circ (m \times 1 \times 1) \circ \cdots }_{ 7 \text{ times}}.
\end{equation*}
We turn the collection of spaces $S_n$ into an $A_\infty$-operad as follows. A point in the space, $S_{i_1} \times \cdots S_{i_n}$ is a section of the corresponding product of bibundles (over $|\Gamma|^{( i_1 + \cdots + i_n)}$). These bundles project to $|\Gamma|^n$ and so we get a map,
\begin{equation*}
|\Gamma|^{( i_1 + \cdots + i_n)} \to |\Gamma|^n
\end{equation*}
A point in $S_n$ then gives us a section (over $|\Gamma|^n$) of its corresponding bundle. When we compose these bundles, we get a corresponding composition of sections. This is a section of a certain bundle over $|\Gamma|^{( i_1 + \cdots + i_n)} $, similar in construction to $S_n$, but with a different bracketing. The canonical identification from the associator allows us to identity this with a point of $S_{i_1 + \cdots + i_n}$. Hence we have assembled maps,
\begin{equation*}
	S_n \times S_{i_1} \times \cdots S_{i_n} \to S_{i_1 + \cdots + i_n}.
\end{equation*}
It can readily be checked that this is an operad (the compositions involving $S_1$ and $S_0$ are similar, where $S_1$ corresponds to sections of the identity bibundle $|\Gamma| \to |\Gamma$ and $S_0$ to sections of the unit bibundle $\iota: 1 \to |\Gamma|$). Moreover since the spaces are contractible, this is an $A_\infty$-operad. 

We have also seen how $|\Gamma|$ is naturally an algebra for this operad. $S_n$ is a space of sections of a bundle over $|\Gamma|^n$ and this bundle has a map to $|\Gamma|$, hence there is an induced action map,
\begin{equation*}
	S_n \times | \Gamma|^n \to |\Gamma|
\end{equation*}
which makes $|\Gamma|$ into an $A_\infty$-space.
\end{proof}

\begin{remark}
With more work one sees that a homomorphism of smooth 2-groups yields a morphism of $A_\infty$-spaces. 
\end{remark}

\section{A Finite Dimensional String 2-Group}

In this section we prove a theorem which interprets Segal-Mitchison topological group cohomology in terms of certain central extensions of smooth 2-groups. The model of the String group presented in this paper is a special case of such an extension.

\subsection{Segal's Topological Group Cohomology}

In \cite{Segal70, Segal75} G. Segal introduced a version of cohomology for (locally contractible) topological groups, which mimics the derived functor definition of ordinary group cohomology. A few years later, Quillen introduced the notion of {\em exact category}  in his work on algebraic K-theory \cite{Quillen72}. Roughly speaking, an exact category is an additive category equipped with a distinguished class of {\em short exact sequences}. Such a category is not required to be an abelian category, and there are many examples, among them the category of topological groups considered by Segal. It is now realized that essentially all the constructions and machinery of homological algebra carry over to the setting of exact categories, see \cite{Buhler08} for a fairly comprehensive introduction and overview. In particular resolutions and derived functors can often be defined in this setting and Segal's cohomology is an example. 

Segal's cohomology was rediscovered by Brylinski in \cite{Brylinski00} in the smooth setting. This group cohomology solves many of the defects of the naive \textquotedblleft group cohomology with continuous/smooth cochains\textquotedblright, and certain cocycle representatives will serve as our basic input in constructing the String$(n)$ 2-group. Let us summarize some of the special features of this cohomology theory. Proofs of these facts can be found in \cite{Segal70, Segal75}. If $G$ is a topological group and $A$ is a topological $G$-module\footnote{As mentioned in the introduction, and action of a topological group $G$ on a topological abelian group $A$ is an action in the usual sense such that the map
\begin{equation*}
 G \times A \to A
\end{equation*}
is continuous, where $G \times A$ is given the compactly generated topology.
} then we can form the {\em Segal-Mitchison group cohomology} $H_\text{SM}^n(G; A)$.  
\begin{enumerate}
\item In low dimensions, $q = 0,1,2$, $H^q_\text{SM}(G; A)$ may be interpreted in the usual manner.
\begin{enumerate}
\item  $H^0_\text{SM}(G; A) = A^G$, the $G$-invariant subgroup,
\item  $H^1_\text{SM}(G; A)$ is the group of continuous crossed homomorphisms $G \to A$, modulo the principal crossed homomorphisms, and
\item $H^2_\text{SM}(G; A)$ is the group of isomorphism classes of group extensions, \begin{equation*}
A \to E \to G
\end{equation*}
inducing the action of $G$ on $A$, where $E \to G$ is topologically a locally trivial fibration, i.e. a fiber bundle. 
\end{enumerate}
\item If $A$ is contractible, then the Segal-Mitchison cohomology coincides with the continuous group cohomology.
\item If $A$ is discrete, then the Segal-Mitchison cohomology is isomorphic to the twisted cohomology of the space $BG$ with coefficients in $A$.\footnote{If $A$ is discrete then the action of $G$ factors as $G \to \pi_0G \to Aut(A)$ and since $\pi_1 BG \cong \pi_0 G$, we have a canonical locally constant sheaf over $BG$. This can also be obtained as the sheaf associated to the fiber bundle $EG \times_G A \to BG$. This sheaf is used to define the twisted cohomology of $BG$.} In particular if the $G$ action is trivial we have $H_\text{SM}^n(G; A) \cong H^n(BG; A)$, the ordinary cohomology of the space $BG$ with coefficients in $A$. 

\item A sequence of topological $G$-modules  $A' \to A \to A''$ is a {\em short exact sequence} if it is a short exact sequence of underlying abelian groups and the action of $A'$ on $A$ realizes $A$ as an $A'$-principal bundle over $A''$. If $A' \to A \to A''$ is such a short exact sequence then there is a long exact sequence of cohomology groups,
\begin{align*}
0 & \to H^0_\text{SM}(G, A') \to  H^0_\text{SM}(G, A) \to  H^0_\text{SM}(G, A'') \to \\ 
	&\to  H^1_\text{SM}(G, A') \to H^1_\text{SM}(G, A) \to \cdots
\end{align*}

\item If $G$ is a topological group and $A$ is a topological $G$-module, then $A$ determines a simplicial sheaf $\cO_A$ on the simplicial space $BG_\bullet$. When the action of $G$ on $A$ is trivial, then $\cO_A^n$ is simply the sheaf of continuous functions with values in $A$. In general we have \ $H^q_\text{SM}(G; A) \cong H^q( BG_\bullet ; \cO_A)$,  where $BG_\bullet$ is the simplicial nerve of $G$, $\cO_A$ is the simplicial sheaf corresponding to $A$, and this latter group denotes the hypercohomology (see \cite{Friedlander82} for details about simplicial hypercohomology). 

\end{enumerate}

The category of (locally contractible) topological abelian groups becomes an exact category with the short exact sequences introduced above. Segal's cohomology is then defined to be the derived functor of the invariant subgroup functor,
\begin{equation*}
	\Gamma^G: A \mapsto A^G.
\end{equation*}
In \cite{Segal70} Segal proves that this functor is derivable by demonstrating a class of objects adapted to this functor (his so-called ``soft'' modules). He also proves this cohomology has  the above properties.

In the finite dimensional smooth setting, there is an analogous exact structure. More precisely fix a Lie group $G$ and consider the category of abelian Lie groups equipped with smooth actions of $G$. A sequence of such $G$-modules  $A' \to A \to A''$ will be called a {\em short exact sequence} if it is a short exact sequence of underlying abelian groups and the action of $A'$ on $A$ realizes $A$ as an $A'$-principal bundle over $A''$. Unfortunately this category will not contain enough adapted objects in order to derive the invariant subgroup functor. 

This can be overcome by embedding abelian Lie groups into a lager category of ``smooth'' abelian group objects. For example abelian group objects in one of the ``convenient categories of smooth spaces'' discussed in \cite{BH08} provide such an enhancement. Alternatively one could use the sheaf cohomology of the resulting simplicial sheaf on $BG_\bullet$, see \cite{Friedlander82} for the relevant definitions. Both of these approaches result in the same cohomology theory which Brylinski \cite{Brylinski00} shows may be computed as the cohomology of the total complex of a certain double complex, which we now describe. 


\begin{definition}
	A {\em simplicial cover} (or just {\em cover}) of a simplicial manifold $X_\bullet$ is a simplicial manifold $U_\bullet$ and a map $U_\bullet \to X_\bullet$, such that each $U_n \to X_n$ is a surjective submersion. A cover is {\em good} if each of the spaces 
	\begin{equation*}
		U_n^{[p]} = \underbrace{U_n \times_{X_n} \cdots \times_{X_n} U_n}_{ p \text{ times}}
\end{equation*}
	is the union of paracompact contractible spaces, where $p,n \geq 0$. 
\end{definition}

Consider the simplicial manifold $BG_\bullet$. In \cite{Brylinski00} Brylinski provides an inductive construction of a good simplicial cover of $BG_\bullet$\footnote{For general simplicial spaces it is not possible to construct {\em good} simplicial covers. In that case one must instead use good hypercovers. Brylinski's construction allows use to avoid this subtlety entirely.}. 
 For compact $G$, using techniques developed in \cite{Meinrenken03} we may construct a canonical such cover. For our purposes, however, any good simplicial cover will do and so we will not dwell on this aspect. For $A \in \Top\ab_G$, we get an induced double complex, where 
	\begin{equation*}
		{C}^{p,q} = C^\infty(U_q^{[p+1]}, A)
\end{equation*}
and the differentials are induced by the two simplicial directions.
The cohomology of the total complex computes the smooth version of Segal's group cohomology. Let us fix some notation. 
Let $G$ be a Lie group and $A$ an abelian Lie group with a $G$-action.
\begin{itemize}
\item $H^k_{SM}(G;A)$ is the smooth version of Segal's cohomology which we take to be the total cohomology of the double complex ${C}^{p,q} = C^\infty(U_q^{[p+1]}, A)$ computed from a good simplicial cover of $BG_\bullet$. 
\item $H^k_\text{smooth}(G;A)$ denotes the cohomology of $G$ computed with smooth group cocycles. 
\item We will primarily be interested in the case where the action of $G$ on $A$ is trivial. In this case $\check{H}^k(G;A)$ is the \v{C}ech cohomology of the space $G$ with coefficients in  the sheaf of smooth functions with values in $A$. 
\end{itemize}

\begin{corollary}\label{CorCptG}
If $G$ is a compact Lie group and $A = S^1$ then we have the following isomorphism of smooth Segal-Mitchison cohomology
\begin{equation*}
	H^i_{SM}(G; S^1) \cong H^{i+1}_{SM}(G; \Z) \cong H^{i+1}(BG)
\end{equation*}
for all $i \geq 1$, where $H^k(BG)$ is integral cohomology of the space $BG$, and moreover in low degrees we have an exact sequence
\begin{equation*}
	0 \to H^0_{SM}(G; \Z) \to H^0_{SM}(G; \R) \to H^0_{SM}(G; S^1) \to H^1_{SM}(G; \Z) \to 0
\end{equation*}
where $S^1$, $\R$, and $\Z$ are considered $G$-modules with trivial action.
\end{corollary}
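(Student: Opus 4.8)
The plan is to exploit the \emph{exponential short exact sequence} of trivial $G$-modules
\begin{equation*}
	0 \to \Z \to \R \to S^1 \to 0,
\end{equation*}
where $\R \to S^1$ is the map $t \mapsto e^{2\pi i t}$. This realizes $\R$ as a $\Z$-principal bundle over $S^1$, and so it is a short exact sequence of abelian Lie groups in precisely the sense demanded by the long exact sequence property of (smooth) Segal-Mitchison cohomology listed above. Applying $H^*_{SM}(G; -)$ therefore produces a long exact sequence
\begin{equation*}
	\cdots \to H^i_{SM}(G; \Z) \to H^i_{SM}(G; \R) \to H^i_{SM}(G; S^1) \to H^{i+1}_{SM}(G; \Z) \to \cdots.
\end{equation*}

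The crucial input, and the only genuinely nonformal step, is the vanishing $H^i_{SM}(G; \R) = 0$ for all $i \geq 1$. Since $\R$ is contractible, the property identifying Segal-Mitchison cohomology with continuous/smooth group cohomology for contractible coefficients gives $H^i_{SM}(G; \R) \cong H^i_{\text{smooth}}(G; \R)$. For a \emph{compact} Lie group $G$ this smooth group cohomology vanishes in positive degrees by the classical averaging argument: integration of homogeneous cochains against the normalized Haar measure of $G$, say $(sf)(g_0,\dots,g_{n-1}) = \int_G f(g,g_0,\dots,g_{n-1})\,dg$, furnishes a contracting homotopy on the positive-degree part of the complex, so every cocycle of degree $\geq 1$ is a coboundary. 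I would simply quote this standard fact (it underlies the van Est and Hochschild–Mostow vanishing results and requires only the existence of Haar measure together with the $\R$-linearity of the coefficients).

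Granting the vanishing, the rest is a purely formal dissection of the long exact sequence. For each $i \geq 1$ the segment
\begin{equation*}
	0 = H^i_{SM}(G; \R) \to H^i_{SM}(G; S^1) \to H^{i+1}_{SM}(G; \Z) \to H^{i+1}_{SM}(G; \R) = 0
\end{equation*}
yields the isomorphism $H^i_{SM}(G; S^1) \cong H^{i+1}_{SM}(G; \Z)$. Because $\Z$ is discrete and carries the trivial $G$-action, the property identifying discrete-coefficient Segal-Mitchison cohomology with classifying-space cohomology gives $H^{i+1}_{SM}(G; \Z) \cong H^{i+1}(BG)$, completing the first chain of isomorphisms. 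Finally, for the low-degree statement I would read off the initial segment of the same long exact sequence, using $H^1_{SM}(G; \R) = 0$ to truncate on the right and the identification $H^0_{SM}(G; A) = A^G = A$ (for the trivial module $A$) on the left; this produces exactly
\begin{equation*}
	0 \to H^0_{SM}(G; \Z) \to H^0_{SM}(G; \R) \to H^0_{SM}(G; S^1) \to H^1_{SM}(G; \Z) \to 0,
\end{equation*}
the claimed four-term exact sequence.
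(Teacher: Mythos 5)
Your proposal is correct and is essentially the paper's own proof: the paper likewise applies the long exact sequence for $\Z \to \R \to S^1$, uses contractibility of $\R$ to identify $H^i_{SM}(G;\R)$ with smooth-cochain group cohomology, and invokes compactness of $G$ (the Haar-measure averaging you spell out) to kill it in positive degrees. The only difference is expository -- the paper compresses all of this into three sentences, while you make the principal-bundle hypothesis on the exponential sequence, the truncation of the low-degree segment, and the discrete-coefficient identification $H^{i+1}_{SM}(G;\Z)\cong H^{i+1}(BG)$ explicit.
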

\begin{proof}
	The short exact sequence of Lie groups $\Z \to \R \to S^1$ induces a long exact sequence in Segal cohomology. However since $\R$ is contractible, Segal cohomology agrees with cohomology computed with smooth cochains. Since $G$ is compact, these vanish in degrees larger then zero. 
\end{proof}

\subsection{Classifying Extensions of Smooth 2-Groups}

The goal of this section is to prove Theorem \ref{Thm:MainThm}, which classifies the bicategory of central extensions of certain smooth 2-groups. We begin with some elementary results on symmetric monoidal bicategories. Results on general symmetric monoidal bicategories may be found in  \cite{GPS95, KV94, KV94-2, BN96, DS97} and  \cite[Chap. 3]{SchommerPries09}. 

The simplest kinds of symmetric monoidal bicategories arise from 3-term cochain complexes of abelian groups. Let $C^3 \stackrel{d}{\leftarrow} C^2 \stackrel{d}{\leftarrow} C^1$ be a 3-term cochain complex of abelian groups. We may form a strict bicategory $D$ as follows. The objects $D_0$ consist of the elements of the group $C^3$. The 1-morphisms consist of the product $D_1 = C^3 \times C^2$. The source is the projection to $C^3$, the target map is given by $t(c_3, c_2) = c_3 + d(c_2)$, and the strict horizontal composition is given by $(c_3, c_2) \circ (c_3', c_2') = (c_3, c_2 + c_2')$, when $c_3' = c_3 + d(c_2)$. Similarly, the 2-morphisms consist of $C^3 \times C^2 \times C^1$ with source map the projection, target map given by $t(c_3, c_2, c_1) = (c_3, c_2 + d(c_1))$, and vertical composition of composable elements given by addition of the $C^1$ terms. The horizontal composition of composable 2-morphisms is given by $(c_3, c_2, c_1) * (c_3', c_2', c_1') = (c_3, c_2 + c_2', c_1 + c_1')$, which is again a strict operation. 

The bicategory $D$ comes equipped with the structure of a strict symmetric monoidal bicategory. The monoidal structure is induced by the abelian group multiplication in $C^i$, $i = 1,2,3$, and the braiding is trivial. In this way we obtain a number of examples of elementary symmetric monoidal bicategories. For example, an abelian group $M$ may be regarded as a cochain complex concentrated in a single degree. There are three possibilities for 3-term cochain complexes arising in this manner, and hence we obtain three symmetric monoidal bicategories, $M$, $M[1]$, and $M[2]$. The notation $M[2]$ denotes the symmetric monoidal 2-category with one object, one 1-morphism, and $M$ many 2-morphisms, whose compositions are induced from multiplication in M. Similarly $M[1]$  denotes the symmetric monoidal bicategory with one object, $M$ many 1-morphisms, and only identity 2-morphisms, and $M$ without decoration denotes the symmetric monoidal bicategory with objects $M$ and  only identity 1-morphisms and 2-morphisms. The following lemma is presumably well known to experts. 

\begin{lemma} \label{Lma:CatFromChainComplexSplits}
	Let $(C^i, d)$ be a 3-term cochain complex and let $D$ be the resulting symmetric monoidal bicategory. Let $H^* = H^*(C^i, d)$ be the cohomology groups of $(C^i, d)$. Then there is an equivalence,
	\begin{equation*}
		D \simeq H^3 \times H^2 [1] \times H^1 [2],
	\end{equation*} 
of symmetric monoidal bicategories. In general this equivalence is unnatural, but nonetheless there is a natural isomorphism $\pi_0(D) \cong H^3$. 
\end{lemma}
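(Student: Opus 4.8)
The plan is to construct the equivalence explicitly by choosing splittings of the cochain complex and verifying that the resulting functor is a symmetric monoidal equivalence, with the only truly canonical piece being the isomorphism $\pi_0(D) \cong H^3$.

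First I would fix some auxiliary data. Since we are working with abelian groups (not necessarily over a field), the short exact sequences $0 \to Z^i \to C^i \to B^{i+1} \to 0$ and $0 \to B^i \to Z^i \to H^i \to 0$ need not split. To get around this, I would \emph{not} attempt to split off $H^i$ as a direct summand of $C^i$; instead I would observe that the structure of $D$ depends only on the abstract data of the cochain complex, and the claim is an equivalence of bicategories, so I only need a functor and quasi-inverse up to (monoidal) natural isomorphism. Concretely, I would first show that $D$ only sees the complex up to quasi-isomorphism: the assignment $(C^i,d) \mapsto D$ is itself (pseudo)functorial in the complex, and a quasi-isomorphism of three-term complexes induces an equivalence of the associated symmetric monoidal bicategories. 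Granting this, it suffices to replace $(C^i,d)$ by any quasi-isomorphic complex, in particular by its cohomology $H^3 \leftarrow H^2 \leftarrow H^1$ with zero differentials, whose associated bicategory is exactly $H^3 \times H^2[1] \times H^1[2]$.

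The key steps, in order, are then: (1) describe the objects/1-morphisms/2-morphisms of $D$ intrinsically --- $\pi_0(D)$ is the set of objects $C^3$ modulo the relation generated by the existence of a 1-morphism, i.e. $C^3 / d(C^2) = H^3$, and this identification is strictly compatible with the additive monoidal structure, giving the \emph{natural} isomorphism $\pi_0(D) \cong H^3$ directly; (2) compute the automorphism data, namely that $\pi_1$ of $D$ at any object (1-morphisms from an object to itself up to 2-isomorphism) is $Z^2/d(C^1) \cap \ker t = \ker(d\colon C^2 \to C^3)/d(C^1) = H^2$, and that the 2-automorphisms of an identity 1-morphism form $\ker(d\colon C^1 \to C^2) = H^1$ since there are no lower terms; (3) assemble a functor $H^3 \times H^2[1] \times H^1[2] \to D$ by choosing, once and for all, set-theoretic lifts $H^3 \to C^3$, $H^2 \to Z^2 \subseteq C^2$, $H^1 \to Z^1 \subseteq C^1$ of the quotient maps, and checking that the induced assignment is a symmetric monoidal pseudofunctor; (4) verify it is essentially surjective and fully faithful on both 1- and 2-morphisms using the computations in (1)--(2), hence an equivalence, and symmetric monoidal because the braiding on $D$ is trivial and the lifts are additive up to coherent correction.

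The main obstacle I anticipate is step (3): the chosen lifts of the cohomology classes will generally fail to be group homomorphisms (only sections of quotient maps), so the would-be monoidal functor will only respect the tensor product up to the \emph{coboundary} measuring the failure of additivity, and one must produce the structure 2-isomorphisms $F_2$ and check the monoidal coherence (associativity/unit) hexagon and the symmetry axiom. This is precisely where the unnaturality of the equivalence enters, and it is why the statement only promises a natural isomorphism on $\pi_0$. I expect this to come down to the observation that any two set-theoretic additive-up-to-coboundary lifts differ by a coboundary, which is exactly the data of a natural isomorphism, so the coherence 2-cells can always be supplied (this is the three-term, strictly-associative, trivially-braided analogue of the standard fact that a monoidal category built from a cochain complex is equivalent to its cohomology). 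I would present the $\pi_0$ naturality cleanly and first, since it is the one assertion the lemma singles out as canonical, and then treat the full equivalence as the (unnatural) bookkeeping that follows from the homological computations above.
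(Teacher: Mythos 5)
Your proposal is correct in substance but takes a genuinely different primary route from the paper. The paper proves the splitting either (i) by observing that $D$ is a strict, symmetric Picard-type bicategory whose k-invariants vanish (citing the analogous analyses in \cite{HS05} and \cite{JS93}), so that $D$ splits as the product of its homotopy groups, or (ii) by choosing a skeleton and computing directly as in \cite[Lemmas 3.4.5--3.4.7]{SchommerPries09}; the natural $\pi_0$ isomorphism comes from the chain map $(C^i,d) \to (H^3 \text{ in top degree})$. Your leading idea --- show the assignment $(C^i,d) \mapsto D$ is functorial in the complex, show quasi-isomorphisms induce equivalences, and then replace $(C^i,d)$ by its cohomology with zero differentials, whose associated bicategory \emph{is} $H^3 \times H^2[1] \times H^1[2]$ on the nose --- does not appear in the paper and is arguably cleaner: every functor in sight is strict (induced by a chain map), so the coherence bookkeeping you worry about in step (3) never arises, and the unnaturality is isolated in the choice of derived splitting. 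What this route buys is a conceptual proof with no pentagon-checking; what the paper's k-invariant argument buys is an explanation of \emph{why} the splitting holds (strictness plus symmetry), which your reduction uses implicitly via the fact that complexes of abelian groups split in the derived category.

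Two points need shoring up. First, over $\mathbb{Z}$ there is in general no direct chain map between $(C^i,d)$ and its cohomology realizing the quasi-isomorphism (the sequences $0 \to Z^i \to C^i \to B^{i+1} \to 0$ need not split, as you note); one must use a zig-zag of quasi-isomorphisms, which exists because $\mathbb{Z}$ is hereditary. This is harmless only once you prove the Whitehead-type statement your step (4) relies on: a strict (symmetric monoidal) functor between such bicategories inducing isomorphisms on $\pi_0$, $\pi_1$, and $\pi_2$ is an equivalence --- straightforward here since all 1- and 2-morphisms are invertible, but it must be stated. Second, if you instead insist on the direct construction in step (3), the assertion that ``any two additive-up-to-coboundary lifts differ by a coboundary'' does not by itself supply coherence: after choosing the compositor 2-cells $F_2$ via sections of $d\colon C^1 \to B^2$, the pentagon and symmetry constraints can a priori fail by elements of $Z^1 = H^1$, and the vanishing of this obstruction is exactly the k-invariant vanishing the paper invokes --- it is a consequence of strict symmetry, not of the uniqueness of lifts. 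So either carry out the obstruction analysis explicitly (recovering the paper's second proof) or, better, commit fully to the quasi-isomorphism reduction and drop the lift-based functor altogether.
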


\begin{proof}
	This can be proven in several ways. A global approach is to analyze the k-invariants\footnote{The 1- and 2-morphisms in the bicategroy $D$ arising from a 3-term chain complex are invertible and hence $D$ is a 2-groupoid. Moreover, $\pi_0D$ is a group and so $D$ is a 3-group. Thus its k-invariants are  well understood and coincide with the classical k-invariants of a stable homotopy 2-type.} of Picard symmetric monoidal bicategories as was done for Picard symmetric monoidal categories in \cite[Appendix B.2]{HS05} and for certain braided monoidal categories in \cite{JS93}. Since $D$ is both {\em strict} and {\em symmetric monoidal}, these k-invariants vanish. Thus $D$ splits up to unnatural equivalence as the product of its `homotopy groups'. Notice, however that there is natural map of cochain complexes from $(C^i, d)$ to the complex with a single non-zero group $H^3$ in the top term. This is an isomorphism on third cohomology groups and induces the natural isomorphism $\pi_0(D) \cong H^3$. 
	
	Alternatively, one may simply choose a skeleton of $D$, as in \cite[Lemma 3.4.5 - 3.4.7]{SchommerPries09}. A direct calculation, following the proofs of these lemmas, shows that $D$ splits as in the statement of Lemma \ref{Lma:CatFromChainComplexSplits}. Producing such a splitting usually requires choices.
\end{proof}

Let $G$ be a Lie group and $A$ an abelian Lie group. Let $Z_{SM}(G; A)$ denote the 3-term chain complex
\begin{equation*}
	Z_{SM}^3(G; A) \stackrel{d}{\leftarrow} C^2_{SM}(G; A) \stackrel{d}{\leftarrow} C^1_{SM}(G; A)
\end{equation*}
given by the smooth Segal-Mitchison cohomology of $G$ with values in the trivial $G$-module $A$. By abuse of notation, let $Z_{SM}(G; A)$ also denote the corresponding symmetric monoidal bicategory.

\begin{theorem} \label{ThmMain}
	Let $G$ and $A$ be as above. There is a natural equivalence of symmetric monoidal bicategories $Z_{SM}(G; A) \stackrel{\simeq}{\to} Ext(G; [pt/A])$. Thus, we have a (generally unnatural) equivalence,
	\begin{equation*}
		Ext(G; [pt/A]) \simeq  H^3_\text{SM}( G; A) \times H^2_\text{SM}( G; A)[1] \times H^1_\text{SM}( G; A)[2].
	\end{equation*}
	where $ H^i_\text{SM}( G; A)$ denotes the smooth version Segal-Mitchison topological group cohomology \cite{Segal70}. In particular isomorphism classes of central extensions, 
		\begin{center}
	\begin{tikzpicture}[thick]
		\node (LL) at (0,0) 	{$1$ };

		\node (L) at (2,0) 	{$\downdownarrows$};

		\node (LA) at (2,.5) 	{$A$};
		\node (LB) at (2,-.5) 	{$pt$};
		\node (M) at (4, 0) {$\downdownarrows$};
		\node (MA) at (4,.5) 	{$\Gamma_1$};
		\node (MB) at (4,-.5) 	{$\Gamma_0$};

		\node (R) at (6,0)	{$\downdownarrows$};
		\node (RA) at (6,.5)	{$G$};
		\node (RB) at (6, -.5)	{$G$};

		\node (RR) at (8,0)	{$1$};

		\draw [->] (LL) --  node [left] {$$} (L);
		\draw [->] (L) -- node [above] {$$} (M);
		\draw [->] (M) -- node [right] {$$} (R);
		\draw [->] (R) -- node [below] {$$} (RR);
	\end{tikzpicture}
	\end{center}
	are in natural bijection with $H^3_\text{SM}(G;A)$.
\end{theorem}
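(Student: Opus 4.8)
The plan is to produce the equivalence of symmetric monoidal bicategories directly, by constructing an explicit functor $\Phi\colon Z_{SM}(G;A) \to Ext(G;[pt/A])$ out of cocycle data, and then verifying that it is essentially surjective and fully faithful at the level of both $1$- and $2$-morphisms. Once this is in place the final bijection is immediate: composing the equivalence with Lemma \ref{Lma:CatFromChainComplexSplits} identifies $\pi_0 Ext(G;[pt/A])$ with $\pi_0 Z_{SM}(G;A)$, and the top cohomology of the truncated complex $Z^3_{SM} \leftarrow C^2_{SM} \leftarrow C^1_{SM}$ is exactly $Z^3_{SM}/d(C^2_{SM}) = H^3_{SM}(G;A)$; naturality of the construction in $G$ and $A$ makes the bijection natural. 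So the genuine work is the construction and the equivalence, and everything downstream is formal.

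For the construction on objects I would fix a good simplicial cover $U_\bullet \to BG_\bullet$ as in Brylinski's construction, so that $H^*_{SM}(G;A)$ is the total cohomology of $C^{p,q} = C^\infty(U_q^{[p+1]},A)$, and then unpack a total degree-$3$ cocycle into its bidegree components, reading them as geometric gluing data in the two directions. The \emph{\v Cech} direction ($p$) at a fixed simplicial level assembles an $A$-gerbe, while the simplicial direction ($q$) records how the gerbes over $BG_1 = G$, $BG_2 = G\times G$, and $BG_3$ interact under the face maps. Concretely, the components over $U_1^{[\bullet]}$ build the underlying $[pt/A]$-principal bundle $E \to G$ exactly as in the abelian cocycle-gerbe of Example \ref{ExampleCocycleGerbe}; those over $U_2^{[\bullet]}$ supply the multiplication $1$-morphism $m\colon E\times E \to E$ and its bundle structure over $G\times G$; and those over $U_3^{[\bullet]}$ give the associator $2$-isomorphism. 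This is the simplicial upgrade of the skeletal construction $\Gamma(G,A,0,\alpha)$ recalled before the discrete classification. The essential point to check is that the total-complex cocycle identity is, term by term, equivalent to the pentagon and triangle identities of Definition \ref{DefnWeak2group} together with the principal-bundle condition, so that $\Phi$ lands in genuine \emph{central} extensions (with trivial $\rho$), as in the discrete case.

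Extending $\Phi$ to morphisms is then dictated by the same dictionary. A total $2$-cochain $c_2$ produces an equivalence of smooth $2$-groups between the extensions built from $c_3$ and from $c_3 + dc_2$: its \v Cech part trivializes the difference of the two gerbes and its simplicial part adjusts the multiplicative data, giving exactly the $1$-morphism datum $(h,\alpha,\beta)$ of a morphism in $Ext$; a total $1$-cochain $c_1$ yields the $2$-isomorphism between such equivalences. Functoriality and the symmetric monoidal structure then come for free from linearity: addition of cochains corresponds to Baer sum of central extensions, and the trivial braiding of $Z_{SM}(G;A)$ matches the symmetry of the Baer sum, so $\Phi$ is symmetric monoidal.

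The substance of the proof is the verification that $\Phi$ is an equivalence, and I expect the main obstacle to be essential surjectivity. Fullness and faithfulness on $1$- and $2$-morphisms are comparatively mechanical: after passing to a common refinement of covers, any equivalence of the associated extensions is described by a $2$-cochain, and two such cochains give isomorphic equivalences precisely when they differ by $d c_1$, which is the morphism structure of $Z_{SM}(G;A)$, with the analogous statement one level down handling $2$-morphisms. For essential surjectivity I would start from an arbitrary central extension $[pt/A]\to E \to G$ and reconstruct cocycle data: the projection $E \to G$ is a $[pt/A]$-principal bundle, i.e.\ an $A$-gerbe on $G$, which over a good cover is trivial and so described by a \v Cech cocycle, while the multiplication and associator of the smooth $2$-group trivialize over the corresponding covers of $G\times G$ and $G\times G\times G$ and furnish the simplicial-direction components. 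The delicate part is arranging all these local trivializations compatibly over a \emph{single} good simplicial cover of $BG_\bullet$ — precisely where Brylinski's inductive construction is needed — and then checking that the assembled data is a total cocycle representing $E$ up to equivalence. Centrality, via the discrete classification and Remark \ref{RmkDiscreteInSmooth}, guarantees that no nontrivial action appears, so the data lives in $Z_{SM}(G;A)$ rather than a twisted variant. Combining essential surjectivity with full faithfulness gives the equivalence, and hence the natural bijection of isomorphism classes of central extensions with $H^3_{SM}(G;A)$.
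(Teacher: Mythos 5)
Your proposal is correct and follows essentially the same route as the paper: fix a good simplicial cover of $BG_\bullet$, read the components $\lambda_1,\lambda_2,\lambda_3$ of a total degree-$3$ cocycle as the underlying $A$-gerbe (Example \ref{ExampleCocycleGerbe}), the multiplication, and the associator of a smooth 2-group $E^\lambda$, send $2$-cochains to homomorphisms of extensions and $1$-cochains to $2$-homomorphisms, deduce the symmetric monoidal statement formally from Baer-sum functoriality, prove essential surjectivity and full faithfulness by trivializing an arbitrary extension over a good cover (handling cover-dependence by common refinement), and obtain the final bijection from Lemma \ref{Lma:CatFromChainComplexSplits}. The one device you leave implicit --- that the multiplication exists only as a bibundle, produced in the paper by bundlizing functors $\mu, f_1, f_2$ out of auxiliary pull-back groupoids $F^\lambda$ and $H^\lambda$ over covers of $G\times G$ and $G^{3}$ --- is precisely how the paper implements your cocycle-to-structure dictionary, so this is a matter of detail rather than a gap.
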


\begin{proof}
	The bicategory $Z_{SM}(G; A)$ is covariantly functorial in $G$, contravariantly functorial in $A$, and preserves products. Moreover, just as for $Ext(G;[pt/A])$, the symmetric monoidal structure is induced from the Baer sum. Thus it suffices to produce a natural equivalence of bicategories $Z_{SM}(G; A) \to Ext(G; [pt/A])$. It will automatically be an equivalence of {\em symmetric monoidal} bicategories.  See also \cite[Theorem 3.4.10]{SchommerPries09}.	The remaining statements in the theorem follow from this equivalence and Lemma \ref{Lma:CatFromChainComplexSplits}.

Before getting into the details, which are somewhat computational, let us explain the philosophy behind why this theorem is true. This result is the offspring of two well established ideas. On the one-hand, following \cite{BL04}, there is a direct relationship between smooth functors between Lie groupoids and between certain Lie groupoid cocycles. This link extends to the level of smooth natural transformations, as well. If the multiplication in a smooth 2-group was given by a smooth functor, then we would be able to translate the axioms it must satisfy into certain concrete statements about cocyle data and be able to classify central extensions in terms of this data. See \cite[Theorem 55]{BL04} for an example of a result along these lines. 

On the other hand the multiplication in a smooth 2-group is a bibundle and these come from functors precisely when there exists a global section of the bibundle over the source object space \cite{Lerman08} (see also Proposition \ref{PropSectionIsBundlization}). However, every bibundle admits sections {\em locally} in the sense that for every bibundle $P$ from $G$ to $H$, there exists a cover $f: U \to G_0$, such that the composition of $P$ with the canonical bibundle from $f^*G$ to $G$ admits a global section, see Example \ref{ExamplePullbackGroupoid}.


So while the multiplication bibundle in a 2-group, which is a bibundle $m$ from $G \times G$ to $G$, may not admit global sections, we may choose a cover $f: U_2 \to G_0 \times G_0$ such that the pull-back of $m$ to $f^*(G \times G)$ {\em does} admit global sections. Hence this induced bibundle comes from a functor, and may be described by appropriate classical cocycle data. The associator will have a similar description via cocycle data on the pull-back of $G \times G \times G$ to an appropriately chosen cover of $G_0 \times G_0 \times G_0$. 
In this way we may extract from a smooth 2-group precisely the cocycle data of a smooth Segal-Mitchison cocycle. Conversely, given such cocycle data we may push it forward to bibundle data via the equivalences between, say, $G \times G$ and its pull-back along $U_2 \to G_0 \times G_0$.

We now proceed to prove Theorem \ref{ThmMain}. Let us now note that there is a slight ambiguity in the definition of the cochain complex $Z_{SM}(G;A)$. In defining the cochain complex computing Segal-Mitchison cohomology, we were free to use any good simplicial covering. The resulting cohomology is independent of this choice. However the cochain complex itself clearly depends upon this choice. However, in the course of proving Theorem \ref{ThmMain}, we will show that these choices are irrelevant. More precisely, we will first fix a simplical cover $U$ and construct a functor $Z_{SM, U}(G;A) \to Ext(G; [pt/A]) $ from the chain complex bicategory defined relative to this fixed chosen cover. If the cover is good, this functor is an equivalence of bicategories. Refining a simplical cover $U' \to U$ induces a (strict) functor $Z_{SM, U'}(G;A) \to Z_{SM, U}(G;A)$ which is also an equivalence of bicategories. Our construction is compatible with refinement and since any two covers have a common refinement the choice of cover is irrelevant. Equivalently, we may consider $Z_{SM}(G;A)$ to consist of the directed colimit over all simplicial covers. These considerations produce an equivalence $Z_{SM}(G;A) \to Ext(G; [pt/A])$. 

First we fix a good simplicial cover and construct a canonical central extension from a given cocycle representative $\lambda \in Z^3_{SM}(G;A)$. 
A 3-cocycle has three non-trivial parts, which are smooth maps. 
\begin{align*}
	\lambda_3: U_3^{[1]} & \to A \\
	\lambda_2: U_2^{[2]} & \to A \\
	\lambda_1: U_1^{[3]} & \to A
\end{align*} 
We will see that these three data give rise to the three most important structures on a smooth 2-group. $\lambda_1$ will give rise to an $A$-gerbe over $G$, which will be the underlying Lie groupoid of $E^\lambda$. $\lambda_2$ will give rise to the multiplication bibundle for $E^\lambda$, and $\lambda_3$ will give rise to its associator. These three maps $(\lambda_1, \lambda_2, \lambda_3)$ form a cocycle in the double complex $C^{pq} = C^\infty( U_q^{[p+1]}; A)$, which computes the (smooth version of) Segal's group cohomology, thus they satisfy the following relations:
\begin{align*}
	& \delta_h \lambda_1 = 0 \\
	& \delta_v \lambda_1 = \delta_h \lambda_2 \\
	&\delta_v \lambda_2 = \delta_h \lambda_3 \\
	& \delta_v \lambda_3 = 0.
\end{align*} 
The first of these states that $\lambda_1$ is a \v{C}ech cocycle in $\check C^2_{U_1}( G; A)$. 

In Example \ref{ExampleCocycleGerbe} we constructed a $[pt/A]$-principal bundle (a.k.a. an $A$-gerbe) given precisely such a \v{C}ech cocycle. This principal bundle will be the underlying Lie groupoid of our smooth 2-group $E^\lambda$. Recall that the objects of $E^\lambda$ consist the manifold $U_1$ and the morphisms 
consist of the manifold $U_1^{[2]} \times A$, with composition being given by the formula,
\begin{align*}
	E^\lambda_1 \times_{U_1} E^\lambda_1 &\to E^\lambda_1 \\
	(u_0, u_1, a) \times (u_1, u_2, b) & \mapsto (u_0, u_2, a + b + \lambda_1(u_0, u_1, u_2)).
\end{align*}
There are several associated objects we may build out of the cocycle $\lambda$. The function $(d_0^*\lambda_1, d_2^*\lambda_1)$ from $U_2^{[3]}$ to $A \times A$ defines a \v{C}ech cocycle in $\check C^2_{U_2}(G \times G; A \times A)$ and hence gives rise to a $[pt/ A \times A]$-principal bundle $F^\lambda$ over $G \times G$. Here $d_i$ is the simplicial map in the simplicial manifold $U_\bullet$. There is a functor $(d_0, d_2): F^\lambda \to E^\lambda \times E^\lambda$, which is given on morphisms by the map,
\begin{align*}
	U_2^{[2]} \times A^2 &\to U_1^{[2]} \times A \times U_1^{[2]} \times A \\
	(v_0, v_1, a, b) & \mapsto (d_0(v_0), d_0(v_1), a) \times (d_2(v_1), d_2(v_1), b).
\end{align*}
This realizes $F^\lambda$ as a pull-back of the groupoid $E^\lambda \times E^\lambda$ and so becomes an equivalence upon bundlization.  

Similarly, the function $(d_0^* d_0^* \lambda_1, d_0^* d_2^* \lambda_1, d^*_2 d^*_2 \lambda_1)$ defines a \v{C}ech cocycle in $\check C^2_{U_3}(G^3; A^3)$ and hence a $[pt/A^3]$-principal bundle $H^\lambda$ over $G^3$ (the three maps $d_0d_0$, $d_2d_0$, and $d_2 d_2$ are the simplicial maps living over the three projections from $G^3$ to $G$). These Lie groupoids fit into a diagram of smooth functors. 
\begin{center}
\begin{tikzpicture}
	\node (LT) at (-2.5, 3) {$E^\lambda \times E^\lambda$};
	\node (MT) at (3, 3) {$F^\lambda$};
	\node (RT) at (6, 3) {$E^\lambda$};

	\node (LM) at (-2.5, 2) {$E^\lambda \times E^\lambda \times E^\lambda$};
	\node (MM) at (3, 2) {$H^\lambda$};
	\node (RM) at (6, 2) {$E^\lambda$};
	\node (LB) at (-2.5, 1) {$E^\lambda \times E^\lambda \times E^\lambda$};
	\node (MB) at (3, 1) {$H^\lambda$};
	\node (RB) at (6, 1) {$E^\lambda$};

	\draw [->] (MT) -- node [above] {$g = (d_0, d_2)$} (LT);
	\draw [->] (MT) -- node [above] {$\mu$} (RT);
	\draw [->] (MM) -- node [above] {$h = (d_0d_0, d_2d_0, d_1 d_2)$} (LM);
	\draw [->] (MM) -- node [above] {$f_1$} (RM);
	\draw [->] (MB) -- node [above] {$h = (d_0d_0, d_2d_0, d_1 d_2)$} (LB);
	\draw [->] (MB) -- node [above] {$f_2$} (RB);
	
\end{tikzpicture}
\end{center}
The left-pointing functors 
become equivalences after bundlization. The right-pointing functors are given explicitly by the following formulas: 
\begin{align*}
	\mu: F^\lambda_1= U_2^{[2]} \times A^2 & \to E^\lambda_1 = U_1^{[2]} \times A\\
	(v_0, v_1, a, b) & \mapsto (d_1(v_0), d_1(v_1), a + b + \lambda_2(v_0, v_1) \\
	f_1: H^\lambda_1 = U_3^{[2]} \times A^3 & \to E^\lambda_1 = U_1^{[2]} \times A \\
	(w_0, w_1, a, b, c) & \mapsto (d_1d_1(w_0), d_1d_1(w_1), a + b + c + \\
			& \quad \quad +  d^*_2 \lambda_2(w_0, w_1) + d_0^*\lambda_2(w_0, w_1)) \\
	f_2: H^\lambda_1 = U_3^{[2]} \times A^3 & \to E^\lambda_1 = U_1^{[2]} \times A \\
	(w_0, w_1, a, b, c) & \mapsto (d_1d_1(w_0), d_1d_1(w_1), a + b + c + \\
		& \quad \quad + d^*_1 \lambda_2(w_0, w_1) + d_3^*\lambda_2(w_0, w_1) )
\end{align*}
These are functors because the identity $\delta_v \lambda_1 = \delta_h \lambda_2$ holds. Moreover the identity $\delta_h \lambda_3 = \delta_v \lambda_2$ implies that $\lambda_3: U_3 \to A$ gives the components of a smooth natural transformation $a$ from $f_1$ to $f_2$. Turning these into bibundles and inverting $g$ will give us bibundle $M$ from $E^\lambda \times E^\lambda$ to $E^\lambda$. Inverting $h$ and composing with $f_1$ and $f_2$ yields two bibundles from $E^\lambda \times E^\lambda \times E^\lambda$ to $E^\lambda$. These are canonically identified with $M \circ (M \times 1)$ and $M \circ (1 \times M)$, respectively. The natural transformation $a$ induces a natural isomorphism $\alpha: M \circ (M \times 1) \to M \circ (1 \times M)$. The equation $\delta_v \lambda_3 = 0$ ensures that this associator satisfies the pentagon identity. 

More concretely, consider the composition $m: U_1 \times U_1 \to G \times G \stackrel{m}{\to} G$, and the induced fiber product $U_1 \times_G^m( U_1 \times U_1)$. This space admits a covering by the space $V = U_1 \times_G^{d_1} U_2 {}^{(d_0, d_2)}\times_{G \times G} (U_1 \times U_1)$. 
The data $(\lambda_1, \lambda_2)$ defines a function $\phi$ given by the following formula. 
\begin{align*}
	\phi: U_1 \times_G U^{[2]}_2 \times_{G \times G} (U_1 \times U_1) & \to A \\
	(u_0, v_0, v_1, u_2, u_3) & \mapsto \lambda_2(v_0, v_1) - \lambda_1( u_0, d_1 v_0, d_1 v_1) \\
	&\quad  - \lambda_1(d_0 v_0, d_0 v_1, u_1) \\
	& \quad  - \lambda_1( d_2 v_0, d_2 v_1, u_2) 
\end{align*}
This function defines a \v{C}ech cocycle $\check C^1_{V}( U_1 \times_G (U_1 \times U_1); A)$ and hence a corresponding $A$-bundle $M$ over $U_1 \times_G (U_1 \times U_1)$. This is the total space of the bibundle $M$ above. The necessary groupoid actions are easily constructed from this description. A compatible unit is straight forward to define and is determined up a contractible category of choices.  

A direct calculation shows that the sequence of homomorphisms 
\begin{equation*}
	[pt/A] \to E^\lambda \to G
\end{equation*}
is a central extension of smooth 2-groups, and thus provides a construction of a central extension from a cocycle $\lambda \in Z^3_{SM}(G;A)$. It remains to show that this construction can be extended to the entire cochain bicategory  $Z_{SM}(G;A)$. 

Let $\lambda, \lambda' \in Z^3_{SM}(G;A)$ be two cocycles. A 1-morphism in $Z_{SM}(G;A)$ from $\lambda$ to $\lambda'$ is precisely a cochain $\theta \in C_{SM}^2(G;A)$ such that $\delta \theta = \lambda - \lambda'$. This cochain has components $\theta_1: U_1^{[2]} \to A$ and $\theta_2: U_2 \to A$. In components the equation  $\delta \theta = \lambda - \lambda'$ becomes, 
\begin{align*}
	\delta_h \theta_1 & = \lambda_1 - \lambda_1' \\
	\delta_v \theta_1 + \delta_h \theta_2 & = \lambda_2 - \lambda_2' \\
	\delta_v \theta_2 & = \lambda_3 - \lambda_3'.
\end{align*}
This data gives rise to three functors
\begin{align*}
	p_E: & E^\lambda \to E^{\lambda'}\\
	p_F: & F^\lambda \to F^{\lambda'}\\
	p_H: & H^\lambda \to H^{\lambda'}\\
\end{align*}
and a natural isomorphism of functors $b: \mu' \circ p_F \to p_E \circ \mu$,  such that the following diagrams commute strictly,
\begin{center}
\begin{tikzpicture}
	\node (LT) at (0, 1.5) {$E^\lambda \times E^\lambda $};
	\node (LB) at (0, 0) {$E^{\lambda'} \times E^{\lambda'} $};
	\node (RT) at (2, 1.5) {$F^\lambda$};
	\node (RB) at (2, 0) {$F^{\lambda'}$};
	\draw [->] (LT) -- node [left] {$p_E \times p_E$} (LB);
	\draw [<-] (LT) -- node [above] {$g$} (RT);
	\draw [->] (RT) -- node [right] {$p_F$} (RB);
	\draw [<-] (LB) -- node [below] {$g'$} (RB);
	
	\node (LLT) at (6, 1.5) {$E^\lambda \times E^\lambda \times E^\lambda$};
	\node (LLB) at (6, 0) {$E^{\lambda'} \times E^{\lambda'} \times E^{\lambda'}$};
	\node (RRT) at (9, 1.5) {$H^\lambda$};
	\node (RRB) at (9, 0) {$H^{\lambda'}$};
	\draw [->] (LLT) -- node [left] {$p_E \times p_E \times p_E$} (LLB);
	\draw [<-] (LLT) -- node [above] {$h$} (RRT);
	\draw [->] (RRT) -- node [right] {$p_H$} (RRB);
	\draw [<-] (LLB) -- node [below] {$h'$} (RRB);
\end{tikzpicture}.
\end{center}

Explicitly these functors are defined as follows. Each of $p_E$, $p_F$, and $p_H$ is the identity on objects, and on 1-morphisms they are given by the formulas:
\begin{align*}
	p_{E,1}: E^\lambda_1 = U_1^{[2]} \times A & \to U_1^{[2]} \times A = E^{\lambda'}_1 \\
	(u_0, u_1, a) & \mapsto (u_0, u_1, a + \theta_1(u_0, u_1)) \\
	p_{F,1}: F^\lambda_1 = U_2^{[2]} \times A \times A & \to U_2^{[2]} \times A \times A = F^{\lambda'}_1 \\
	(v_0, v_1, a, b) & \mapsto (v_0, v_1, a + d_0^*\theta_1(v_0, v_1), b + d_2^* \theta_1(v_0, v_1)) \\
	p_{H,1}: H^\lambda_1 = U_1^{[2]} \times A \times A \times A & \to U_1^{[2]} \times A \times A \times A = H^{\lambda'}_1 \\
	(w_0, w_1, a, b, c) & \mapsto (w_0, w_1, a + d_0^* d_0^*\theta_1(w_0, w_1), \\
	& \qquad \qquad b + d_0^* d_2^*\theta_1(w_0, w_1), c + d_2^* d_2^*\theta_1(w_0, w_1)).
\end{align*}
The equation $\delta_h \theta_1 = \lambda_1 - \lambda_1'$ is equivalent to the statement that these formulas define functors. The natural transformation $b: \mu' \circ p_F \to p_E \circ \mu$ is given by $b = (\Delta \circ d_1, \theta_2): U_2 \to U_1^{[2]} \times A$. The equation $\delta_v \theta_1 + \delta_h \theta_2 = \lambda_2 - \lambda_2'$ is equivalent to the naturality of this natural transformation. 

We now turn each of these functors into bibundles. The functors $p_E$, $p_E$, and $p_H$ become equivalence bibundles, which are induced from the single bibundle $P: E^\lambda \to E^{\lambda'}$. The natural transformation $b$ becomes a natural isomorphism of bibundles $\beta: M' \circ (P \times P) \to  P \circ M$ from $E^\lambda \times E^\lambda$ to $E^{\lambda'}$. The final equation $\delta_v \theta_2 = \lambda_3 - \lambda_3'$ is equivalent to the commutativity of the diagram in Figure \ref{fig:2grpHom1}, which says that $P$ and $\beta$ are components of a 1-homomorphism from $E^\lambda$ to $E^{\lambda'}$. 

The remaining components of the 1-homomorphism $(P, \beta)$, namely those involving units of $E^\lambda$ and $E^{\lambda'}$, exist and are uniquely determined by the requirement that this be a homomorphism. The homomorphism $P: E^\lambda \to E^{\lambda'}$ is canonically a homomorphism over $G$ (indeed any two 1-homomorphisms from $E^\lambda \to G$ which are isomorphic are uniquely isomorphic). Moreover there is a unique 2-homomorphism $i' \cong P \circ i$ making $(P, \beta)$ into a morphism of central extensions, where $i: [pt/A] \to E^\lambda$ and $i': [pt/A] \to E^{\lambda'}$ are the previously constructed inclusions. In this way we obtain from each 2-cochain $\theta \in Z_{SM}(G;A)$ a homomorphism of central extensions which we denote $P_\theta$. 

If $\lambda$, $\lambda'$, and $\lambda''$ are three cocycles in $Z^3_{SM}(G;A)$ and $\theta$ and $\theta'$ are two cochains in $C^2_{SM}(G;A)$ which represent 1-morphisms from $\lambda$ to $\lambda'$ and from $\lambda'$ to $\lambda''$, respectively, then their composite in $Z_{SM}(G;A)$ is given by the sum $\theta' + \theta$. A simple calculation shows that the construction of the functors $p_E, p_F$ and $p_H$ preserves this composition strictly. For example $p_E^{\theta'} \circ p_E^{\theta} = p_E^{\theta + \theta'}$, on the nose. The natural isomorphisms $b^\theta$ and $b^{\theta'}$ also obey a strict composition identity:
\begin{equation*}
	b^{\theta' + \theta} = (p_E^{\theta'} * b^\theta) \circ (b^{\theta'} * p_F^\theta).
\end{equation*}
After bundlization, these strict equalities become the natural isomorphisms $P^{\theta'} \circ P^\theta \cong P^{\theta' + \theta}$ of homomorphisms of central extensions. The natural isomorphisms induced by $b^\theta$ and $b^{\theta'}$ also obey the expected composition law. A  similar calculation gives natural isomorphisms $P^{0} \cong id_{E^\lambda}$ for any cocycle $\lambda \in Z^3_{SM}(G;A)$. These natural isomorphisms are part of the data of the functor $Z_{SM}(G;A) \to Ext(G;[pt/A])$.


The rest of the data of this functor concerns 2-morphisms in $Z_{SM}(G;A)$. Let $\lambda, \lambda' \in Z^3_{SM}(G;A)$ be objects in $Z_{SM}(G;A)$ and let $\theta, \theta' \in C^2_{SM}(G;A)$ represent 1-morphisms from $\lambda$ to $\lambda'$. Thus $\delta \theta = \delta \theta' = \lambda - \lambda'$. A 2-morphism from $\theta$ to $\theta'$ is represented by a 1-cochain $\omega \in C^1_{SM}(G;A)$ such that $\delta \omega = \theta - \theta'$. Such a 1-cochain consists of a single function $\omega: U_1 \to A$ such that $-\delta_h \omega = \theta_1 - \theta_1'$ and $\delta_v \omega = \theta_2 - \theta_2'$. This gives rise to a natural isomorphism of functors $\eta: p_E^{\theta} \to p_E^{\theta'}$ whose components are $\eta(u) = (u, u, \omega(u))$. That this formula defines a natural isomorphism is equivalent to the equation  $-\delta_h \omega = \theta_1 - \theta_1'$. This induces a natural isomorphism of homomorphisms $\eta: P^\theta \to P^{\theta'}$. The second equation, $\delta_v \omega = \theta_2 - \theta_2'$ is equivalent to the commutativity of the first diagram in Figure \ref{fig:2grp2homs}. The commutativity of the second diagram in that figure is automatic in this case. The 2-homomorphism $\eta^\omega$ is clearly compatible with the projection to $G$ and is also compatible with the inclusion of $[pt/A]$ into $E^\lambda$ and $E^{\lambda'}$. Thus it defines a 2-morphism of central extensions. A similar calculation to before shows that at the level of natural transformations of functors, horizontal and vertical composition in $Z_{SM}(G;A)$ is preserves strictly. After bibundlization this provides the remaining natural isomorphisms and equalities which show that our assignment $Z_{SM}(G;A) \to Ext(G; [pt/A])$ is a functor between bicategories. 


If the simplicial cover used to define the Segal-Mitchison cohomology is {\em good}, then this is an equivalence of bicategories, which is equivalent to showing that it is essentially surjective on objects, essentially full on 1-morphisms and fully-faithful on 2-morphisms. To see this, recall that for any extension $E$, we know that there exists a sufficiently fine cover of $G$ over which the principal bundle $E \to G$ may be trivialized, a sufficiently fine cover of $G \times G$ over which the principal bundle $E \times E$ and the bibundle $M$ can both be trivialized, and a sufficiently fine cover over which the associator may be trivialized. In particular these may be trivialized over any good covering. Choosing explicit trivializations of these principal bundles and bibundles reproduces exactly the components of a Segal-Mitchison cocycle and applying the above construction reproduces (up to equivalence) the original extension. Similarly, any homomorphism $P: E^\lambda \to E^{\lambda'}$ between extensions arising from cocycles may be trivialized over a good cover and such a trivialization gives rise to an explicit cochain $\theta \in C^2_{SM}(G;A)$ representing a morphism between the corresponding cocycles. Applying our previous construction yields a homomorphism of extensions isomorphic to the original $P$. Finally if $E^\lambda$ and $E^{\lambda'}$ are extension arising from cocycles, $P^\theta, P^{\theta'}: E^\lambda \to E^{\lambda'}$ are morphisms of extensions arising from 2-cochains, then any 2-morphism $\omega: P^\theta \to P^{\theta'}$ arises from a unique 1-cochain $\omega$. This last statement is essentially equivalent to the fact that an isomorphism between trivialized principal $A$-bundles is given by a unique function function on the base. This completes our proof of Theorem \ref{ThmMain}.


\end{proof}


\subsection{String(n) as an Extension of Smooth 2-Groups}

The model of Segal-Mitchison topological group cohomology that we used Theorem \ref{ThmMain} computes this cohomology from the total complex associated to certain a double complex, and consequently  gives us several calculational tools. In particular there are the edge homomorphisms 
\begin{align*}
	H^i_\text{smooth}(G;A) & \to H^i_{SM}(G;A) \\
	H^{i+1}_{SM}(G;A) & \to \check H^i(G; \cO_A)
\end{align*}
where $\cO_A$ is the sheaf of smooth $A$-valued functions on the space $G$, and $H^i_\text{smooth}(G;A)$ denotes naive group cohomology with smooth cocycles. 
The construction in the previous section shows that in a central extension of smooth 2-groups,
\begin{equation*}
		[pt/A] \to E^\lambda \to G
\end{equation*}
coming from a cocycle $\lambda \in C^3(G;A)$, the underlying $[pt/A]$-principal bundle of $E^\lambda$ is classified by the image of $[\lambda_1]$ in $\check H^2(G; \cO_A)$. This allows us to identify the homotopy type of the geometric realization of $E^\lambda$. 

In fact we can realize the component $\lambda_1$ simplicially as a map of simplicial spaces,
\begin{equation*}
	\lambda_1: (G_{U_1})_\bullet \to K(A[2])_\bullet
\end{equation*}
Here $K(A[2])_\bullet$ is the simplicial topological abelian group associated to the chain complex with no differentials and with $A$ concentrated in degree two, and $(G_{U_1})_\bullet$ is the \v{C}ech simplicial manifold associated with the cover $U_1 \to G$. A direct calculation shows that the simplicial nerve of $E^\lambda$ is the pull-back in simplicial spaces,
\begin{center}
\begin{tikzpicture}[thick]
	\node (LT) at (0,1.5) 	{$(E^\lambda)_\bullet$ };
	\node (LB) at (0,0) 	{$(G_{U_1})_\bullet$};
	\node (RT) at (2,1.5) 	{$E(A[2])_\bullet$};
	\node (RB) at (2,0)	{$K(A[2])_\bullet$};
	\node at (.35, 1) {$\ulcorner$};
	\draw [->] (LT) --  node [left] {$$} (LB);
	\draw [->] (LT) -- node [above] {$$} (RT);
	\draw [->] (RT) -- node [right] {$$} (RB);
	\draw [->] (LB) -- node [below] {$\lambda_1$} (RB);
\end{tikzpicture}
\end{center}
where $E(A[2])_\bullet$ is the simplicial topological abelian group corresponding to the two term chain complex of topological abelian groups with $A$ in degrees one and two, and with differential the identity. This is a contractible chain complex and hence the geometric realization of the corresponding simplicial space is contractible. If fact it is the universal $K(A,1)$-bundle over the geometric realization $| K( A[2])_\bullet| \simeq K( A, 2)$. Since geometric realization of simplicial spaces commutes with fiber products, we have that
\begin{equation*}
	G \stackrel{\simeq}{\to} |G_U| \stackrel{|\lambda_1|}{\to} K(A, 2)
\end{equation*}
is the classifying map in the long exact fibration sequence. This identifies the homotopy type of the space $|E^\lambda|$.

\begin{theorem} \label{ThmConstructionOfString}
	Let $n \geq 5$. Then $H^3_{SM}(Spin(n); S^1) \cong H^4(BSpin(n)) \cong \Z$ and the central extension of smooth 2-groups corresponding to a generator gives a model for $String(n)$ as a smooth 2-group. 
\end{theorem}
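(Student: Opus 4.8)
The plan is to separate the computation of $H^3_{SM}(\Spin(n);S^1)$ from the homotopy-theoretic identification of $|E^\lambda|$, and to isolate as the crux the claim that a generating cocycle produces a classifying map of degree $\pm 1$.

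First I would compute the cohomology. Corollary~\ref{CorCptG}, applied to the compact Lie group $\Spin(n)$ with $A = S^1$, gives an isomorphism $H^3_{SM}(\Spin(n);S^1) \cong H^4(B\Spin(n);\Z)$, so it remains to evaluate the right-hand side. Since $\pi_2$ of any Lie group vanishes and $\Spin(n)$ is simply connected, $\Spin(n)$ is $2$-connected and hence $B\Spin(n)$ is $3$-connected. The hypothesis $n \geq 5$ guarantees that $\Spin(n)$ is simple, so $\pi_3(\Spin(n)) \cong \Z$; this is exactly where $n \geq 5$ enters, since $\Spin(4) = SU(2)\times SU(2)$ has $\pi_3 \cong \Z^2$. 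Therefore $\pi_4(B\Spin(n)) \cong \pi_3(\Spin(n)) \cong \Z$, and by the Hurewicz theorem together with universal coefficients (using $H_3(B\Spin(n);\Z) = 0$) one obtains $H^4(B\Spin(n);\Z) \cong \Z$.

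Next I would fix a cocycle $\lambda$ representing a generator and invoke the homotopy-type identification established immediately above this theorem. The geometric realization of the underlying $[pt/S^1]$-principal bundle $E^\lambda \to \Spin(n)$ (Example~\ref{ExampleCocycleGerbe}) is a fibration with fiber $|[pt/S^1]| \simeq K(\Z,2)$, giving a fibration sequence
\begin{equation*}
K(\Z,2) \longrightarrow |E^\lambda| \longrightarrow \Spin(n)
\end{equation*}
classified by $|\lambda_1| \colon \Spin(n) \to K(S^1,2) \simeq K(\Z,3)$. Using that $\Spin(n)$ is $2$-connected, the long exact sequence of homotopy groups reads
\begin{equation*}
0 \to \pi_3(|E^\lambda|) \to \pi_3(\Spin(n)) \xrightarrow{\partial} \pi_2(K(\Z,2)) \to \pi_2(|E^\lambda|) \to 0,
\end{equation*}
where $\partial\colon \Z \to \Z$ is multiplication by the integer classifying $|\lambda_1| \in H^3(\Spin(n);\Z)$, while $\pi_1(|E^\lambda|)=0$ and $\pi_i(|E^\lambda|) \cong \pi_i(\Spin(n))$ for all $i \geq 4$.

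The heart of the argument is to show that when $[\lambda]$ generates $H^3_{SM}(\Spin(n);S^1)\cong\Z$ its component $\lambda_1$ generates $H^3(\Spin(n);\Z)\cong\Z$, i.e. that $\partial = \pm 1$. I would do this by identifying the assignment $[\lambda] \mapsto [\lambda_1]$ with the transgression isomorphism $H^4(B\Spin(n);\Z) \xrightarrow{\sim} H^3(\Spin(n);\Z)$. In the bar spectral sequence $E_1^{q,p} = H^p(\Spin(n)^q;\Z) \Rightarrow H^{p+q}(B\Spin(n);\Z)$, the $2$-connectivity of $\Spin(n)$ forces every term of total degree $4$ to vanish except $E_2^{1,3} \cong H^3(\Spin(n);\Z)$, which is a permanent cycle and equals the transgression; and by construction $\lambda_1$ is the restriction of $\lambda$ to the single-factor level $U_1 \to \Spin(n)$, representing precisely this transgressed class (equivalently the image under the edge map $H^3_{SM}(G;S^1)\to \check H^2(G;\mathcal O_{S^1})\cong H^3(G;\Z)$), which is by definition the class of $|\lambda_1|$. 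This matching of the algebraic edge/transgression map with the geometrically defined classifying map of the gerbe $E^\lambda$ is the main obstacle; once it is in place, generators map to generators and $\partial = \pm 1$. Assembling the pieces, $\partial = \pm 1$ yields $\pi_2(|E^\lambda|)=\pi_3(|E^\lambda|)=0$, so $|E^\lambda|$ is the $3$-connected cover of $\Spin(n)$ with $|E^\lambda| \to \Spin(n)$ realizing it as such; since $|E^\lambda|$ also carries a group-like $A_\infty$-structure by the theorem above identifying realizations of smooth $2$-groups with $A_\infty$-spaces, it is a model of $\String(n)$.
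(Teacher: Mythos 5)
Your proposal is correct and follows essentially the same route as the paper: it combines Corollary~\ref{CorCptG} with the homotopy-type identification of $|E^\lambda|$ established just before the theorem, and reduces everything to showing that the edge homomorphism $H^3_{SM}(\Spin(n);S^1)\to \check H^2(\Spin(n);\cO_{S^1})\cong H^3(\Spin(n);\Z)$ carries a generator to a generator, proved via the transgression together with primitivity of degree-three classes. Your bar-spectral-sequence verification that the transgression is an isomorphism for a $2$-connected compact group, and your explicit long exact sequence of homotopy groups, simply fill in details the paper cites as well known (it offers the same two arguments: Segal's identification of the edge map with the Serre transgression, and the double-complex spectral sequence with the primitivity observation).
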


\begin{proof}
It is well known that $ H^4(BSpin(n)) \cong \Z$ for $n \geq 5$, and so we know by Corollary \ref{CorCptG} that Segal cohomology $H^3_{SM}(Spin(n); S^1) \cong \Z$ and by Theorem \ref{ThmMain} that this classifies central extensions of smooth 2-groups, $[pt/S^1] \to E \to Spin(n)$. If $[\lambda] \in H^3_{SM}(Spin(n), S^1)$ is a class associated to a given central extensions, then by the above considerations we know that the topology of $|E|$ is determined by the image of $[\lambda]$ under the edge homomorphisms $H^3_{SM}(Spin(n); S^1) \to \check H^2(Spin(n); S^1) \cong \Z$, and moreover $|E^\lambda|$ will have the correct homotopy type precisely if $[\lambda]$ is mapped to a generator of $\check H^2(Spin(n); S^1)$. 

There are several ways to deduce that this edge homomorphism is surjective, and hence an isomorphism. For example, using the short exact sequence of smooth Lie groups $\Z \to \R \to \S^1$ and the induced long exact sequence in Segal-Mitchison cohomology, we see that the edge homomorphism is the same as the one for integral coefficients,
\begin{equation*}
	H^4(BG) \cong H^4_{SM}(G; \Z) \to \check H^3(G; \Z) \cong H^3(G).
\end{equation*}
Segal \cite{Segal70} identifies this map with the transfer map from the Serre spectral sequence, which is well known to be an isomorphism in these degrees for simply connected Lie groups like
 $G=Spin(n)$. Alternatively, we may re-examine the double complex which computes Segal-Mitchison cohomology, and use it to extract slightly more information. Associated to this double complex is a spectral sequence with $E_1$-term,
\begin{equation*}
	E_1^{p,q} = \check H^q( G^p ; \cO_A) \Rightarrow H^{p+q}_{SM}(G; A).
\end{equation*}
In the case that $G= Spin(n)$ and $A = S^1$, the $E_1$-term looks as follows
\begin{center}
\begin{tikzpicture}[thick]
	\node (H3) at (0,3)  {$0$};	
	\node (WA) at (0,2)  {$0$};	
	\node (A2) at (0,1) {$0$};	
	\node () at (0,0) 	{$S^1$ };
	
	\node () at (2,3) 	{$\vdots $ };
	\node () at (2,2) 	{$\check H^2(G; S^1) $ };
	\node () at (2,1) 	{$0 $};
	\node () at (2,0) 	{$C^\infty(G; S^1) $ };
	
	\node () at (4.5,3) 	{$\vdots $ };
	\node () at (4.5,2) 	{$\check H^2(G^2; S^1) $ };
	\node (AA) at (4.5,1) 	{$0$};
	\node (A0) at (4.5,0) 	{$C^\infty(G^2; S^1)$ };
	
	\node () at (7,2) 	{$\cdots$ }; 
	\node (T) at (7,1) 	{$0$};
	\node () at (7,0) 	{$C^\infty(G^3; S^1)$ };
	
	\node () at (9,1) 	{$\cdots$};
	\node () at (9,0) 	{$\cdots$ };

	\draw [->] (-2,-.5) --  node [below] {$p$} (9,-.5);
	\draw [->] (-1.5,-1) -- node [left] {$q$} (-1.5, 4);
	
	
	
\end{tikzpicture}
\end{center}

The cohomology of the first row under the $d^1$-differential is precisely the smooth group cohomology of $G$, i.e. the cohomology computed using smooth $S^1$-valued group cocycles. Since $G$ is compact and 1-connected, this is trivial in degrees larger then zero \cite{Hu52-1, Hu52-2, vanEst53, vanEst55, Mostow62}. This yields the following exact sequence,
\begin{equation*}
0 \to H^3_{SM}(Spin(n); S^1) \to \check H^2(G; S^1) \stackrel{d^1}{\to} \check H^2(G^2; S^1). 
\end{equation*}
The kernel of $d^1$ consists of the {\em primitive elements}. It is well known that for $G= Spin(n)$ every element of $\check H^2(G; S^1)$ is primitive in this sense. 
\end{proof}


\begin{remark}
There are two generators of this cohomology group and hence there are two associated central extensions. The corresponding smooth 2-groups which model String(n) are equivalent and this equivalence is a map of extensions  over $G$ which induces the order two automorphism of $[pt/S^1]$. 
\end{remark}

The above considerations also give a new conceptual re-interpretation of the notion of {\em multiplicative bundle gerbe}. In \cite{CJMSW05} multiplicative $S^1$-bundle gerbes over $G$ were introduced and shown to correspond to elements of $H^4(BG)$. We see from the above that a multiplicative bundle gerbe over $G$ may instead be viewed as a central extension of smooth 2-groups. 

\subsection{Concluding Remarks} 

Theorem \ref{ThmMain} provides a construction which produces a central extension of smooth 2-groups from a given smooth Segal-Mitchison cocycle and Theorem \ref{ThmConstructionOfString} shows that for any choice of generator of $H^3_{SM}(Spin(n); S^1)$ the corresponding central extension gives a model for String$(n)$. But in what sense is this a construction of String$(n)$? One may worry about the choices involved in this construction. However the choices don't matter. Theorem \ref{ThmMain} shows that the isomorphism classes of homomorphisms between any two extensions are in bijection with second Segal-Mitchison cohomology $H^2_{SM}(G; A)$, and the 2-homomorphisms between any two such homomorphisms form a torsor for $H^1_{SM}(G;A)$. In the case of String$(n)$, where $G = Spin(n)$ and $A = S^1$, both of these groups vanish, so that the bicategory of String$(n)$-extensions forms a contractible bicategory, i.e. any two extensions are equivalent, and any two homomorphisms realizing this equivalence are isomorphic via a unique 2-isomorphism.  This is the strongest possible uniqueness result one could hope for, and shows that the String$(n)$ 2-group extension is unique in precise analogy with the unique Spin$(n)$ extension of $SO(n)$. 

This brings us to the matter of other extensions. While it was sufficient to construct a model of String$(n)$, Theorem \ref{ThmMain} only classifies central extensions of smooth 2-groups of the particular form,
\begin{equation*}
	[pt/A] \to E \to G
\end{equation*}
where $G$ is an ordinary Lie group, and $A$ an ordinary abelian Lie group, viewed as a trivial $G$-module. This can be generalized, and the construction presented here works with negligible modification when the action of $G$ on $A$ is non-trivial. In this case we get an extension of smooth 2-groups, but it will not be a {\em central} extension. 

More generally, we would like to understand the bicategory of extensions of arbitrary smooth 2-groups $Ext(\G; \A)$, where $\G$ and $\A$ do not necessarily come from ordinary Lie groups. One could hope for some sort of cohomology theory which classifies theses extensions and which reduces to Segal-Mitchison cohomology when $\G =G$ is an ordinary Lie group and $\A = [pt/A]$ for $A$ an abelian Lie group. 

This hypothetical cohomology should take short exact sequences of smooth abelian 2-groups to long exact sequences and have other nice homological properties. Indeed such a cohomology does in fact exist, and  we may identify $Ext(\G; \A) \simeq H^2(\G; \A)$. Specializing to the case $\G = G$ and $\A = [pt/A]$, and using the short exact sequence $[pt/A] \to 0 \to A$ from Example \ref{Example:FavExtension}, we have isomorphisms,
\begin{equation*}
	H^2(G; [pt/A]) \cong H^3(G;A) = H^3_{SM}(G;A).
\end{equation*}
However the proper way to define this cohomology theory and deduce its properties requires developing the machinery of bicategorical homological algebra, in particular in a form that can be applied to the smooth setting. This would take us too far afield of current goals, but is a topic we take up in \cite{SchommerPries}.

\bibliographystyle{alpha}
\bibliography{StringGroup}

\end{document}